 \newtheorem{definition}{Definition}[section]
 \newtheorem{theorem}[definition]{Theorem}
 \newtheorem{lemma}[definition]{Lemma}
 \newtheorem{proposition}[definition]{Proposition}
 \newtheorem{corollary}[definition]{Corollary}
 \newtheorem*{theorem*}{Theorem}
\newtheorem*{proposition*}{Proposition}
\newtheorem*{lemma*}{Lemma}
 \theoremstyle{remark}
 \newtheorem{remark}[definition]{Remark}
  \newtheorem*{acknowledgements}{Acknowledgements}
  \newtheorem*{notation}{Notation}
\newcommand{\op}[1]{\operatorname{#1}}
\newcommand{\acou}[2]{\ensuremath{\langle #1 , #2 \rangle}}
\newcommand{\tr}{\op{tr}}
\newcommand{\Tra}{\ensuremath{\op{Trace}}}
\newcommand{\TR}{\ensuremath{\op{TR}}}
\newcommand{\Res}{\ensuremath{\op{Res}}}
\newcommand{\C}{\ensuremath{\mathbb{C}}} 
\newcommand{\N}{\ensuremath{\mathbb{N}}} 
\newcommand{\R}{\ensuremath{\mathbb{R}}} 
\newcommand{\Z}{\ensuremath{\mathbb{Z}}} 
\newcommand{\CZ}{\ensuremath{\mathbb{C}\!\setminus\!\mathbb{Z}}} 
\newcommand{\Rn}{\ensuremath{\R^{n}}}
\newcommand{\Rno}{\R^{n}\!\setminus\! 0}
\newcommand{\URn}{U\times\R^{n}}
\newcommand{\URno}{U\times(\R^{n}\!\setminus\! 0)}
\newcommand{\Ca}[1]{\ensuremath{\mathcal{#1}}}
\newcommand{\cD}{\ensuremath{\mathcal{D}}}
\newcommand{\cE}{\Ca{E}}
\newcommand{\cK}{\ensuremath{\mathcal{K}}}
\newcommand{\cS}{\ensuremath{\mathcal{S}}}
\newcommand{\pdo}{\ensuremath{\Psi}} 
\newcommand{\pdoi}{\ensuremath{\Psi^{\op{int}}}} 
\newcommand{\pdoz}{\ensuremath{\Psi^{\Z}}} 
\newcommand{\pdozev}{\ensuremath{\Psi^{\Z}_{\text{ev}}}} 
\newcommand{\pdozodd}{\ensuremath{\Psi^{\Z}_{\text{odd}}}} 
\newcommand{\pdozo}{\ensuremath{\Psi^{\Z}_{\text{reg}}}} 
\newcommand{\pdozsing}{\ensuremath{\Psi^{\Z}_{\text{sing}}}} 
\newcommand{\pdocz}{\ensuremath{\Psi^{\CZ}}} 
\newcommand{\pdoc}{\ensuremath{\Psi_{\op{c}}}} 
\newcommand{\psido}{$\Psi$DO} 
\newcommand{\psidos}{$\Psi$DOs} 
\newcommand{\psinf}{\ensuremath{\Psi^{-\infty}}} 
\newcommand{\psinfc}{\ensuremath{\Psi^{-\infty}_{c}}}
\newcommand{\reg}{{\text{reg}}}
\newcommand{\ord}{{\op{ord}}}
\newcommand{\xiy}{{\xi\rightarrow y}}
\newcommand{\yxi}{{y\rightarrow\xi}}
\newcommand{\supp}{\op{supp}}
\newcommand{\End}{\ensuremath{\op{End}}}
\newcommand{\hotimes}{\hat\otimes}
\begin{document}
\title{Traces on Pseudodifferential Operators\\ and sums of commutators} 

\author{Rapha\"el Ponge}

% \address{Department of Mathematics, University of Toronto, Canada.}
 \address{Department of Mathematics, University of Toronto, 40 St George Street, Toronto, ON M5S 2E4, Canada.}
\email{ponge@math.utoronto.ca}
\thanks{Research partially supported by NSERC grant 341328-07 and by a New Staff Matching grant from the Connaught Fund of the University of Toronto}
 \keywords{Pseudodifferential operators, noncommutative residue.}
 \subjclass[2000]{Primary 58J40; Secondary 58J42}

\begin{abstract}
    The aim of this paper is to show that various known characterizations of traces on classical pseudodifferentials operators can actually be obtained by  
    very elementary considerations on pseudodifferential operators, using only basic properties of these operators. Thereby, we give a unified treatment 
    of the determinations of the space of traces (i) on \psidos\ of non-integer order or of regular parity-class, (ii) on integer order \psidos, (iii) on 
    \psidos\ of non-positive orders in dimension~$\geq 2$, and (iv) on \psidos\ of non-positive orders in dimension~$1$. 
\end{abstract}

\maketitle 
\numberwithin{equation}{section}

 \maketitle 
    
\section*{Introduction}
This paper deals with the description of traces and sum of commutators of classical pseudodifferential operators (\psidos) 
acting on the sections of a vector bundle $\cE$ over a compact manifold $M^{n}$. The results depend on the class of operators under consideration. 

First, if we consider integer order \psidos\ then an important result of Wodzicki~\cite{Wo:PhD} (see also~\cite{Gu:RTCAFIO}, \cite{Pa:NCRCTLSCP}) states 
that when $M$ is connected every trace is proportional to the noncommutative residue trace. The latter was discovered independently by 
Wodzicki~(\cite{Wo:LISA}, \cite{Wo:NCRF}) and Guillemin~\cite{Gu:NPWF}. Recall that the noncommutative residue of an integer order \psido\ 
is given by the integral of a density which in local coordinates can be expressed in terms of 
the symbol of degree $-n$ of the \psido. Alternatively, the noncommutative residue appears as the residual trace induced on integer 
\psidos\ by the analytic continuation of the usual trace to the class \psidos\ of noninteger complex orders. Since its discovery 
 it has found  numerous generalizations and applications (see, e.g., \cite{CM:LIFNCG}, \cite{FGLS:NRMB}, \cite{Gu:RTCAFIO}, \cite{Le:NCRPDOLPS}, 
 \cite{MMS:FIT}, \cite{PR:CDBFCF}, \cite{PR:TCCLG}, \cite{Po:JFA1}, \cite{Sc:NCRMCS}, \cite{Ug:CCGJMSOUWR}, \cite{Va:PhD}).

Following the terminology of~\cite{KV:GDEO} the analytic extension of the usual trace to noninteger order \psidos\ is called the \emph{canonical trace}. 
This is a trace in the sense that it vanishes on commutators $[P_{1}, P_{2}]$ such that $\ord P_{1}+\ord P_{2}$ is not an integer. Furthermore, it
makes sense on integer order \psidos\ such that their symbols have parity properties which make their noncommutative residue densities 
vanish. In this paper these \psidos\ are said to be of regular parity-class (see Section~\ref{sec:Schwartz-kernels} for the precise definition). It has been shown recently by 
Maniccia-Schrohe-Seiler~\cite{MSS:UKVT} and Paycha~\cite{Pa:NCRCTLSCP} that the tracial properties of the canonical trace characterize 
it among the linear forms on noninteger order \psidos\ and on regular parity-class \psidos.  

Next, when we consider the algebra of zero'th order \psidos\ we get other traces by composing any linear form on 
$C^{\infty}(S^{*}M)$ with the fiberwise trace of the zero'th order symbol of the \psido. Such traces are called \emph{leading symbol traces}. It has been shown by 
Wodzicki~\cite{Wo:PhD} that when $M$ is connected and has dimension~$\geq 2$ every trace on zero'th order \psidos\ is the sum of a leading 
symbol trace and of a constant multiple of the noncommutative residue. This result was rediscovered by Lescure-Paycha~\cite{LP:UMDEPDO} via the 
computation of the Hochschild homology of the algebra of zero'th order \psidos\ (which, at least in the continuous case, can also be found in~\cite{Wo:RCHS}).

Notice that in~\cite{LP:UMDEPDO} there is no distinction between the cases $n\geq 2$ and \mbox{$n=1$}. 
However, as noticed by Wodzicki~\cite{Wo:PhD}, as well as by the author, there is a specificity to the one dimensional case since in dimension 1
we get other traces beside the sums of leading symbol traces and constant multiples of the noncommutative residue (see below). 

The aim of this paper is to show that the aforementioned characterizations of traces on \psido\ algebras can all be obtained from elementary 
considerations on \psidos, using only very basic properties of these operators. Furthermore, this includes a characterization of the traces on zero'th order 
\psidos\ in dimension 1. 

In his Steklov Institute thesis~\cite{Wo:PhD} Wodzicki determined all the traces on integer order \psidos\ and on zero'th order \psidos, both in 
dimension~$\geq 2$ and in dimension~1. Unfortunately, the  proofs of Wodzicki did not appear elsewhere, so it is difficult to have access to them. 
According to Wodzicki~\cite{Wo:PC} the proofs follow from the determination of the commutator
spaces $\{\mathcal{P}_j,\mathcal{P}_k\}$, where $\mathcal{P}_j$ denotes the space of functions on $T^{*}M\setminus 0$ that are homogeneous of degree $j$. 

The approach of this paper differs  from that of~\cite{Wo:PhD} and can be briefly described as follows. 

The uniqueness of the canonical trace is an immediate 
consequence of the fact that any non-integer order (resp.~parity class) \psido\  is a sum of commutators of 
functions with non-integer order  (resp.~parity class) \psidos\  up to a smoothing operator 
(see Proposition~\ref{prop:Traces.sum-commutators-M1}). 

The uniqueness of the noncommutative residue follows from the fact that an integer order \psido\ supported on a local chart
is a sum of commutators of compactly supported \psidos\ of given specific types, modulo a constant multiple of a fixed given \psido\ with 
non-vanishing noncommutative residue (see Proposition~\ref{prop:TracesZ.commutators-U}). 

A difference with the approach of~\cite{Wo:PhD} is that we work with \psidos\ supported on a local chart, rather than with symbols defined 
on the whole cosphere bundle. Thus, our arguments are very much related to 
that~\cite{FGLS:NRMB} and~\cite{MSS:UKVT}, except that we make use of the  characterization of the \psidos\ in terms of their Schwartz kernels 
and of the interpretation due to~\cite{CM:LIFNCG} of the noncommutative residue of a \psido\ in terms of the logarithmic singularity of its Schwartz kernel 
near the diagonal. 
This leads us  to a simple argument showing that any smoothing operator on $\Rn$ can be written as a sum of commutators of coordinate 
functions with \psidos\ of order $-n+1$ (see Lemma~\ref{lem:TracesZ.smoothing-operators}). In particular, this allows us to prove the uniqueness of the 
noncommutative directly for \psidos, rather than for symbols (compare~\cite{FGLS:NRMB}). 

To deal with traces on zero'th order \psidos\ we observe that to a large extent in dimension~$\geq 2$ 
the sums of \psido\ commutators involved in the proof of the uniqueness of the 
noncommutative residue can be replaced by sum of commutators involving \psidos\ of order~$\leq 0$ 
(see  Proposition~\ref{prop:Traces0.commutators-U} for the precise statement). This allows us to characterize traces on zero'th order \psidos\ in 
dimension~$\geq 2$. In particular, this provides us with an alternative to the spectral sequence arguments of~\cite{LP:UMDEPDO}. 

The main ingredient in the determination of traces on zero'th order \psidos\ in dimension 1 is the observation that in dimension 1 
the symbol of degree $-1$ of a zero'th order \psido\ makes sense intrinsically as a section over the cosphere bundle 
$S^{*}M$ (Proposition~\ref{prop:Traces01.symbol-1}).
As a consequence we can define \emph{subleading symbol traces} in the same way as leading symbol traces are defined. The noncommutative residue is an 
example of subleading symbol trace and we show that in dimension 1 any trace on zero'th order \psidos\ can be uniquely written as 
the sum of a leading symbol trace and of a subleading symbol trace (Theorem~\ref{thm:Traces01.main}). 
An interesting consequence is that for scalar \psidos\ the commutator space of zero'th order \psidos\ agrees with the space of 
\psidos\ of order~$\leq -2$.   

 This paper is organized as follows. In Section~\ref{sec:Schwartz-kernels}, we recall some basic facts on  \psidos\ and their 
Schwartz kernels. In Section~\ref{sec:NCRCT}, we collect some key definitions and properties of the noncommutative residue and of the canonical trace. 
In Section~\ref{sec:Traces-TR}, we show the uniqueness of the canonical trace. In Section~\ref{sec:TracesZ}, we prove that of the noncommutative residue.
In Section~\ref{sec:TracesZ0}, we characterize traces on zero'th order \psidos\ in dimension~$\geq 2$. In Section~\ref{sec:TracesZ01}, we deal with the one dimensional case.

\begin{notation}
     Throughout the paper we let $M^{n}$ denote a compact manifold of dimension $n$ and we let $\cE$ denote a vector bundle of rank $r$ over $M$.
 \end{notation}

\begin{acknowledgements}
I am very grateful to Sylvie Paycha and Mariusz Wodzicki  for stimulating discussions related to the subject matter of this paper, and to Maciej Zworksi 
for his pointer to the reference~\cite{Ka:OCCHTZ}.  I also wish to thank for its hospitality the University of California at Berkeley
where this paper was written. 
\end{acknowledgements}

\section{Pseudodifferential operators and their Schwartz kernels}
\label{sec:Schwartz-kernels}
In this section we recall some notation and results about (classical) \psidos\ and their Schwartz  kernels. 

Let $U$ be an open subset of $\Rn$. The symbols on $\URn$ are defined as follows. 

\begin{definition}
    1) $S_{m}(\URn)$, $m\in \C$, consists of smooth functions $p(x,\xi)$ on $\URno$ such that  $p(x,\lambda\xi)=\lambda^{m}p(x,\xi)$ for any 
    $\lambda>0$.\smallskip
    
    2) $S^{m}(\URn)$, $m\in \C$, consists of smooth functions $p(x,\xi)$ on  $\URn$ admitting an 
asymptotic expansion $p(x,\xi)\sim \sum_{j\geq 0} p_{m-j}(x,\xi)$, $p_{m-j}\in S_{m-j}(\URn)$, in the sense that, for any integer $N$ and 
any compact $K \subset U$, we have estimates:  
    \begin{equation}
        |\partial_{x}^{\alpha}\partial_{\xi}^{\beta}(p-\sum_{j < N}p_{m-j})(x,\xi)|\leq C_{NK\alpha\beta}|\xi|^{\Re m -N-|\beta|}, \quad x\in K, \ 
        |\xi|\geq 1.
         \label{eq:PsiDO.asymptotic-expansion-symbol}
    \end{equation}
\end{definition}

Given a symbol $p\in S^{m}(\URn)$ we let $p(x,D)$ denote the operator from $C^{\infty}_{c}(U)$ to 
$C^{\infty}(U)$ given by
\begin{equation}
    p(x,D)u(x)=(2\pi)^{-n}\int e^{ix.\xi}p(x,\xi)\hat{u}(\xi)d\xi, \quad u \in C^{\infty}_{c}(U).
%     \label{}
\end{equation}
A \psido\ of order $m$ on $U$ is an operator $P$ from  $C^{\infty}_{c}(U)$ to $C^{\infty}(U)$ of the form
\begin{equation}
     P=p(x,D)+R,
%     \label{}
\end{equation}
 with $p\in S^{m}(\URn)$ and $R$ a smoothing operator (i.e.~$R$ is given by a smooth Schwartz kernel).  
 We let $\Psi^{m}(U)$ denote the space of  \psidos\ of order $m$ on $U$. 
 
 Any \psido\ on $U$ is a continuous operator  from $C^{\infty}_{c}(U)$ to $C^{\infty}(U)$ and its
 Schwartz kernel is smooth off the diagonal. We then define \psidos\ on $M$ acting on sections of $\cE$ as follows. 
 
 \begin{definition}
     $\Psi^{m}(M,\cE)$, $m\in \C$, consists of continuous operators $P$ from $C^{\infty}_{c}(M,\cE)$ to $C^{\infty}(M,\cE)$ such that the Schwartz kernel of 
$P$ is smooth off the diagonal and in any open of trivializing local coordinates $U \subset \Rn$ we can write $P$ in the form
    \begin{equation}
        P=p(x,D)+R,
         \label{eq:PsiDO.PsiDO-chart}
    \end{equation}
for some symbol $p\in S^{m}(\URn)\otimes \End \C^{r}$ and some smoothing operator $R$. 
 \end{definition}
 
In addition, we let $\Psi^{-\infty}(M,\cE)$ denote the space of smoothing operators on $M$ acting on sections of $\cE$. 

Let us now recall the description of \psidos\ in terms of their Schwartz kernels. This description is well-known to experts (see, e.g., 
\cite{BG:CHM}, \cite{Ho:ALPDO3}, \cite{Me:APSIT}, \cite{Po:NCR}). The exposition here follows that of \cite{BG:CHM} and~\cite{Po:NCR}.

First, for $\tau$ in $\cS'(\Rn)$ and $\lambda \in \R\setminus 0$ we let $\tau_{\lambda}\in \cS'(\Rn)$ be defined by
     \begin{equation}
           \acou{\tau_{\lambda}(\xi)}{u(\xi)}=|\lambda|^{-n} \acou{\tau(\xi)}{u(\lambda^{-1}\xi)} \qquad \forall u\in\cS(\Rn). 
       \end{equation}
In the sequel it will be convenient to also use the notation $\tau(\lambda \xi)$ to denote $\tau_{\lambda}$.  In any case, we say that  $\tau$ 
is \emph{homogeneous} of degree $m$, $m\in\C$, when $\tau_{\lambda}=\lambda^m \tau$ for any $\lambda>0$. 

It is natural to ask whether a homogeneous functions on $\Rno$ could be extended into a homogeneous distribution on $\Rn$. This problem is completely 
solved by:

\begin{lemma}[{\cite[Thm.~3.2.3, Thm.~3.2.4]{Ho:ALPDO1}}]\label{lem:Homogenenous-extension}
    Let $p(\xi) \in C^{\infty}(\Rno)$ be homogeneous of degree $m$, $m \in \C$.\smallskip
    
    1) If $m$ is not an integer~$\leq -n$, then $p(\xi)$ can be uniquely extended into a homogeneous distribution $\tau(\xi)$ in $\cS'(\Rn)$.\smallskip
    
    2) If $m$ is an integer $\leq -n$,  then at best we can extend $p(\xi)$ into a distribution $\tau(\xi)$ in $\cS'(\Rn)$ such that, for any $\lambda 
       >0$, we have
\begin{equation}
      \tau(\lambda\xi)=\lambda^{m}\tau(\xi) +\lambda^{m}\log \lambda\sum_{|\alpha|=-(m+n)} c_{\alpha}(p)\delta^{(\alpha)},
     \label{eq:NCR.log-homogeneity}
\end{equation}
  where we have let $c_{\alpha}(p) = \int_{S^{n-1}}\frac{(-\xi)^{\alpha}}{\alpha!} p(\xi)d^{n-1}\xi$. In particular $p(\xi)$ admits a homogeneous 
  extension if and only if all the coefficients $c_{\alpha}(p)$ vanish. 
\end{lemma}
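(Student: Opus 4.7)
My plan is to reduce both assertions to the classical theory of the one-variable family $r_{+}^{s}$. For \textbf{uniqueness} in Case~1, I would argue that if $\tau_{1},\tau_{2}\in\cS'(\Rn)$ are two homogeneous extensions of $p$ of degree $m$, then $\tau_{1}-\tau_{2}$ is supported at the origin, hence a finite sum $\sum a_{\alpha}\partial^{\alpha}\delta_{0}$. Since each $\partial^{\alpha}\delta_{0}$ is homogeneous of degree $-n-|\alpha|$, a nonzero homogeneous difference of degree $m$ would force $m\in\{-n,-n-1,\dots\}$, contradicting the hypothesis of Case~1.

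For \textbf{existence} in both cases I would use a single polar regularization. Fix a cut-off $\phi\in C^{\infty}_{c}(\Rn)$ with $\phi\equiv 1$ near $0$. The factor $(1-\phi)p$ is smooth and polynomially bounded, hence already a tempered distribution extending $p$ off the origin, so the entire difficulty is to extend $\phi(\xi)p(\xi)$ across $0$. Passing to polar coordinates $\xi=r\omega$ and using $p(\xi)=r^{m}p(\omega)$, for $u\in\cS(\Rn)$ one is led to
\begin{equation*}
 \int_{S^{n-1}}p(\omega)\int_{0}^{\infty}r^{m+n-1}\phi(r\omega)u(r\omega)\,dr\,d\omega,
\end{equation*}
whose inner factor is an instance of the distribution $r_{+}^{m+n-1}$. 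The standard Hadamard regularization of $r_{+}^{s}$ (subtract the Taylor polynomial of the test function to sufficient order near $0$ and add explicit counter-terms) is meromorphic in $s$, with at most simple poles at $s=-1,-2,\dots$. In Case~1, the value $s=m+n-1$ avoids these poles, so the construction yields a tempered distribution $\tau$ extending $p$; a direct scaling check $\xi\mapsto\lambda\xi$ in the regularized integral shows $\tau$ is homogeneous of degree~$m$.

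In \textbf{Case~2}, $m=-(n+k)$ puts us exactly at a pole, and the defect of homogeneity of the regularized $r_{+}^{-k-1}$ is a logarithmic term of the form $\lambda^{-k-1}\log\lambda\cdot c_{k}\delta^{(k)}(r)$. Transporting this back to $\Rn$ via the polar Taylor identity $\tfrac{1}{k!}(\omega\cdot\nabla)^{k}u(0)=\sum_{|\alpha|=k}\tfrac{\omega^{\alpha}}{\alpha!}\partial^{\alpha}u(0)$ and the duality $\langle\partial^{\alpha}\delta_{0},u\rangle=(-1)^{|\alpha|}\partial^{\alpha}u(0)$ converts the single radial defect into a sum over multi-indices $|\alpha|=k$ against $\delta^{(\alpha)}$. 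The resulting coefficients reassemble exactly into $c_{\alpha}(p)=\int_{S^{n-1}}\tfrac{(-\xi)^{\alpha}}{\alpha!}p(\xi)\,d^{n-1}\xi$, producing~(\ref{eq:NCR.log-homogeneity}); in particular $\tau$ is genuinely homogeneous iff every $c_{\alpha}(p)$ vanishes.

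The \textbf{main obstacle} I anticipate is combinatorial rather than analytic: one must track carefully the residue of $r_{+}^{m+n-1}$ at a resonant integer, reconcile the one-dimensional Taylor expansion in $r$ with the $n$-dimensional Taylor expansion in $\xi$, and verify that the signs and factorials align with the precise formula for $c_{\alpha}(p)$ stated in~(\ref{eq:NCR.log-homogeneity}). Once the Hadamard regularization of $r_{+}^{s}$ is in place, the underlying analytic content is essentially a one-variable statement; the bookkeeping of constants is where errors would most likely creep in.
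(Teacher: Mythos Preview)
The paper does not actually prove this lemma; it is stated with a citation to H\"ormander's \emph{The Analysis of Linear Partial Differential Operators I}, Theorems~3.2.3 and~3.2.4, and used as a black box throughout. So there is no ``paper's own proof'' to compare against.

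That said, your sketch is correct and is essentially the standard argument one finds in H\"ormander. The uniqueness step (difference supported at the origin, hence a combination of $\delta^{(\alpha)}$, each homogeneous of degree $-n-|\alpha|$) is exactly right. The existence step via polar coordinates and the meromorphic family $r_{+}^{s}$ is the classical route, and your identification of the logarithmic defect as arising from the simple pole of $r_{+}^{s}$ at $s=-k-1$ is accurate. Your self-assessment is also on target: the analytic content is one-dimensional, and the only place to go wrong is in matching the radial Taylor coefficients to the multi-index coefficients $c_{\alpha}(p)$ with the correct signs and factorials. If you write it out carefully using $\partial_{r}^{k}u(r\omega)\big|_{r=0}=\sum_{|\alpha|=k}\frac{k!}{\alpha!}\omega^{\alpha}\partial^{\alpha}u(0)$ and the residue $\operatorname{Res}_{s=-k-1}r_{+}^{s}=\frac{(-1)^{k}}{k!}\delta^{(k)}$, the constants do come out as stated.
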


In the sequel for any $\tau\in \cS'(\Rn)$ we let $\check{\tau}$ denote its inverse Fourier transform. Let $\lambda>0$. For any $u \in \cS(\Rn)$ we 
have 
\begin{equation}
    \acou{(\check{\tau})_{\lambda}}{u}=|\lambda|^{-n}\acou{\tau}{(u_{\lambda^{-1}})^{\vee}}=\acou{\tau}{(\check{u})_{\lambda}}= 
    |\lambda|^{-n}\acou{(\tau_{\lambda^{-1}})^{\vee}}{u},  
%     \label{¥}
\end{equation}
that is, we have $\check{\tau}(\lambda \xi)=|\lambda|^{-n}(\tau(\lambda^{-1} \xi))^{\vee}$. From this we deduce that:\smallskip

- $\tau$ is homogeneous of degree $m$ if and only if $\check{\tau}$ is homogeneous of degree $-(m+n)$;\smallskip

- $\tau$ satisfies~(\ref{eq:NCR.log-homogeneity}) if and only if we have
\begin{equation}
    \check{\tau}(\lambda.y)= \lambda^{\hat{m}} \check{\tau}(y) - \lambda^{\hat{m}}\log \lambda 
    \sum_{|\alpha| =\hat{m}} (2\pi)^{-n}c_{\alpha}(p) (-iy)^{\alpha} \qquad  \forall \lambda \in \R\setminus 0.
     \label{eq:NCR.log-homogeneity-kernel}
\end{equation}

In the sequel we set $\N_{0}=\N\cup\{0\}$ and we let $\cS'_{\reg}(\Rn)$  denote the space of tempered distributions on $\Rn$ which are smooth outside the 
origin equipped with the locally convex topology induced by that of $\cS'(\Rn)$ and $C^{\infty}(\Rno)$. 
% Bearing this mind consider the following spaces of distributions: 
  
\begin{definition}
 The space $\cK_{m}(\URn)$, $m\in\C$, consists of distributions $K(x,y)$ in $C^\infty(U)\hotimes\cS'_{\reg}(\Rn)$ such that, for any $\lambda>0$, 
 we have
\begin{equation}
      K(x,\lambda y)= \left\{
      \begin{array}{ll}
          \lambda^m K(x,y) & \text{if $m\not \in \N_{0}$},  \\
          \lambda^m K(x,y) + \lambda^m\log\lambda
                    \sum_{|\alpha|=m}c_{K,\alpha}(x)y^\alpha & \text{if $m\in \N_{0}$},
      \end{array}\right.
    \label{eq:NCR.log-homogeneity-Km}
\end{equation} 
where the functions $c_{K,\alpha}(x)$, $|\alpha|=m$, are in $C^{\infty}(U)$ when $m\in \N_{0}$.
\end{definition}

\begin{definition}\label{def:NCR.cKm2}
$\cK^{m}(\URn)$, $m\in \C$, consists of distributions $K$ in $\cD'(\URn)$ with an asymptotic expansion 
     $K\sim \sum_{j\geq0}K_{m+j}$,  $K_{l}\in \cK_{l}(\URn)$, 
 in the sense that,  for any integer $N$, provided $J$ is large enough we have
\begin{equation}
    K-\sum_{j\leq J}K_{m+j}\in  C^{N}(\URn).
    \label{eq:NCR.cKm2}
\end{equation}
\end{definition}
% \begin{remark}\label{rem:NCR.regularity-cKm1}
%     For $\Re m>0$ we have $K_{m}(\URn)\subset C^{\infty}(U)\hotimes C^{[\Re m]'}(\Rn)$, where $[\Re m]'$ denotes the greatest integer~$< \Re m$.
% \end{remark}

Using Lemma~\ref{lem:Homogenenous-extension} and the discussion that follows we get:
\begin{lemma}\label{lem:NCR.extension-symbolU}
 1) If $p(x,\xi)\in S_{m}(\URn)$ then $p(x,\xi)$ can be extended into a distribution $\tau(x,\xi)\in C^{\infty}(U)\hotimes 
 \cS_{\reg}'(\Rn)$ such that $K(x,y):=\check{\tau}_{\xiy}(x,y)$ belongs to $\cK_{\hat{m}}(\URn)$, $\hat{m}=-(m+n)$. Furthermore,  when $m$ is an 
 integer~$\leq -n$ with the notation 
 of~(\ref{eq:NCR.log-homogeneity-Km}) we have $c_{K,\alpha}(x)= (2\pi)^{-n}\int_{S^{n-1}}\frac{(i\xi)^{\alpha}}{\alpha!}p(x,\xi)d^{n-1}\xi$.\smallskip
 
 2) If $K(x,y)\in \cK_{\hat{m}}(\URn)$ then the restriction of $\hat{K}_{\yxi}(x,\xi)$ to $\URno$ belongs to $S_{m}(\URn)$.
\end{lemma}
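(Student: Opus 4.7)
My plan is to prove both parts by reducing to the one-parameter extension lemma (Lemma~\ref{lem:Homogenenous-extension}) applied in the $\xi$-variable alone, and then translating everything via the Fourier-transform dictionary developed just above the statement. The equivalence between (\ref{eq:NCR.log-homogeneity}) for $\tau$ and (\ref{eq:NCR.log-homogeneity-kernel}) for $\check\tau$ does all the real work; what remains is to check that everything depends smoothly on the parameter $x$.

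For (1), for each fixed $x\in U$ apply Lemma~\ref{lem:Homogenenous-extension} to the homogeneous function $\xi\mapsto p(x,\xi)\in C^\infty(\Rno)$. This produces a distribution $\tau(x,\cdot)\in\cS'(\Rn)$, homogeneous of degree $m$ when $m$ is not an integer $\leq -n$, and otherwise satisfying (\ref{eq:NCR.log-homogeneity}). The pointwise extension depends smoothly on $x$ because the standard Hadamard finite-part construction can be realized by taking $\tau=p$ on $|\xi|\geq 1$ and regularizing on $|\xi|\leq 1$ by subtracting a Taylor-type polynomial whose coefficients are spherical integrals of $p(x,\cdot)$ and its derivatives, all smooth in $x$. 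This places $\tau$ in $C^\infty(U)\hotimes\cS'_{\reg}(\Rn)$. Setting $K(x,y):=\check\tau_{\xiy}(x,y)$ and invoking the equivalence just noted with $\hat m=-(m+n)$ yields $K\in\cK_{\hat m}(\URn)$. For the coefficient formula in the integer case, compare (\ref{eq:NCR.log-homogeneity-kernel}) with the defining identity (\ref{eq:NCR.log-homogeneity-Km}) (with $\hat m$ in place of $m$), using $c_\alpha(p(x,\cdot))=\int_{S^{n-1}}\frac{(-\xi)^\alpha}{\alpha!}p(x,\xi)\,d^{n-1}\xi$; after collecting the factors of $(-i)^{|\alpha|}$ and $(-1)^{|\alpha|}$ one recovers $c_{K,\alpha}(x)=(2\pi)^{-n}\int_{S^{n-1}}\frac{(i\xi)^\alpha}{\alpha!}p(x,\xi)\,d^{n-1}\xi$.

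For (2), start with $K\in\cK_{\hat m}(\URn)\subset C^\infty(U)\hotimes\cS'_{\reg}(\Rn)$ and consider the partial Fourier transform $\hat K_{\yxi}$. Since $K(x,\cdot)$ is tempered and smooth off the origin, a standard homogeneous-distribution argument (cut $K=\chi K+(1-\chi)K$ near $y=0$; the first piece has rapidly decreasing Fourier transform, the second is smooth with suitable growth) shows that $\hat K(x,\cdot)$ is smooth on $\Rno$. The scaling law (\ref{eq:NCR.log-homogeneity-Km}) translates, via the same Fourier dictionary read in reverse, into honest positive homogeneity of $\hat K(x,\cdot)$ of degree $m=-(\hat m+n)$ on $\Rno$: the polynomial-in-$y$ log correction Fourier-transforms to a linear combination of derivatives of $\delta_\xi$, which is supported at $\xi=0$ and hence invisible on $\URno$. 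Combined with joint smoothness in $x$, this gives $\hat K|_{\URno}\in S_m(\URn)$, as required.

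The main obstacle, as I see it, is the smoothness in $x$ in part (1): Lemma~\ref{lem:Homogenenous-extension} produces the extension $\tau$ only pointwise in $x$, so one must either exhibit a concrete regularization formula with explicit smooth parameter dependence, or exploit the fact that the extension is unique modulo the finite-dimensional space spanned by $\{\delta^{(\alpha)}\}_{|\alpha|=-(m+n)}$ (with coefficients that are themselves smooth in $x$ by a uniqueness argument) to transfer smoothness from $p$ to $\tau$. Once this is secured, the rest of (1) and all of (2) are essentially bookkeeping consequences of the Fourier-transform dictionary established just before the statement.
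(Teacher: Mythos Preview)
Your proposal is correct and follows precisely the route the paper indicates: the paper offers no detailed proof of this lemma, stating only that it follows from Lemma~\ref{lem:Homogenenous-extension} and the Fourier-transform discussion that precedes the statement, and your argument is exactly a fleshing-out of that sketch. Your attention to the smooth $x$-dependence of the extension is a genuine technical point that the paper leaves implicit; the resolution you propose (an explicit finite-part formula with coefficients given by spherical integrals of $p(x,\cdot)$, hence smooth in $x$) is the standard one and is entirely adequate here.
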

% \begin{remark}\label{rem:NCR.regularity-cKm2}
%     The definition implies that any distribution $K\in\cK^{m}(\URn)$ is smooth on $\URno$. Furthermore, using Remark~\ref{rem:NCR.regularity-cKm1} we see that 
%     for $\Re m>0$ we have $\cK^{m}(\URn)\subset C^{\infty}(U)\hotimes C^{[\Re m]'}(\Rn)$.
% \end{remark}

This lemma is a key ingredient in the characterization of \psidos\ below.
\begin{proposition}\label{prop:LS.characterisation-kernel}
 Let $P:C_{c}^\infty(U)\rightarrow C^\infty(U)$ be a continuous operator with Schwartz kernel $k_{P}(x,y)$. 
 Then the following are equivalent:\smallskip
 
 (i) $P$ is a \psido\ of of order $m$, $m\in \C$.\smallskip
 
  (ii) We can write $k_{P}(x,y)$ in the form
  \begin{equation}
     k_{P}(x,y)=K(x,x-y) +R(x,y) ,
      \label{eq:PsiHDO.characterization-kernel}
 \end{equation}
with  $K\in \cK^{\hat{m}}(\URn)$, $\hat{m}=-(m+n)$, and $R \in C^{\infty}(U\times U)$.%\smallskip 
 
   Moreover, if (i) and (ii) hold and if in the sense of~(\ref{eq:NCR.cKm2}) we have $K\sim \sum_{j\geq 0}K_{\hat{m}+j}$, 
  $K_{l}\in\cK_{l}(\URn)$,  then 
  $P$ has symbol $p\sim \sum_{j\geq 0} p_{m-j}$, $p_{l}\in S_{l}(\URn)$, where 
  $p_{m-j}(x,\xi)$ is the restriction to $\URno$ of $(K_{m+j})^{\wedge}_{\yxi}(x,\xi)$. 
\end{proposition}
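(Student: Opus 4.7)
The plan is to prove the equivalence by transferring data between the symbol and the kernel pictures term by term via Lemma~\ref{lem:NCR.extension-symbolU}, and then assembling the termwise correspondences into globally defined objects via a Borel-type asymptotic summation. The degree matching is clean: since $\widehat{m-j}=\hat{m}+j$, a symbol expansion $p\sim\sum_{j\geq 0}p_{m-j}$ with $p_{m-j}\in S_{m-j}(\URn)$ pairs off with a kernel expansion $K\sim\sum_{j\geq 0}K_{\hat{m}+j}$ with $K_{\hat{m}+j}\in\cK_{\hat{m}+j}(\URn)$, which both handles (i)$\Leftrightarrow$(ii) and yields the final identification of the symbol with the partial Fourier transforms of the $K_{\hat{m}+j}$.

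For (i)$\Rightarrow$(ii), I start from a local representation $P=p(x,D)+R_{0}$ with $p\in S^{m}(\URn)$, $p\sim\sum_{j\geq 0}p_{m-j}$, and $R_{0}$ smoothing. Lemma~\ref{lem:NCR.extension-symbolU}.1 produces for each $j$ a distributional extension $\tau_{m-j}\in C^{\infty}(U)\hotimes \cS'_{\reg}(\Rn)$ whose partial inverse Fourier transform $K_{\hat{m}+j}(x,y):=(\check{\tau}_{m-j})_{\xiy}(x,y)$ lies in $\cK_{\hat{m}+j}(\URn)$. Cutting off near $\xi=0$ by excision factors $\chi(\xi/R_{j})$ with $R_{j}\to\infty$ fast enough, a standard Borel-type summation yields $K\in\cK^{\hat{m}}(\URn)$ with $K\sim\sum_{j\geq 0}K_{\hat{m}+j}$ in the sense of~\eqref{eq:NCR.cKm2}. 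The key Fourier-analytic input is that the partial inverse Fourier transform in $\xi$ of a symbol in $S^{m-N}(\URn)$ belongs to $C^{K}(U\times U)$ once $N$ is large enough depending on $K$; applied to $p-\sum_{j<N}\chi(\xi/R_{j})p_{m-j}$ this forces $k_{P}(x,y)-K(x,x-y)$ to be of arbitrarily high regularity, hence smooth.

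Conversely, for (ii)$\Rightarrow$(i), I start from $k_{P}(x,y)=K(x,x-y)+R(x,y)$ with $K\sim\sum_{j\geq 0}K_{\hat{m}+j}$ and $R\in C^{\infty}(U\times U)$. Lemma~\ref{lem:NCR.extension-symbolU}.2 applied to each $K_{\hat{m}+j}$ produces a homogeneous symbol $p_{m-j}\in S_{m-j}(\URn)$ obtained by restricting $(\hat{K}_{\hat{m}+j})_{\yxi}(x,\xi)$ to $\URno$. A Borel summation of the $p_{m-j}$ produces $p\in S^{m}(\URn)$ with $p\sim\sum_{j\geq 0}p_{m-j}$, and the same Fourier-analytic regularity estimate as above, run in reverse, shows that the Schwartz kernel of $P-p(x,D)$ is smooth, so that $P\in\pdo^{m}(U)$.

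The hard part is the rigorous matching of the Borel truncations on the two sides, especially at the integer values $\hat{m}+j\in\N_{0}$, where Lemma~\ref{lem:Homogenenous-extension} shows that the homogeneous extension is no longer unique and produces the logarithmic terms in~\eqref{eq:NCR.log-homogeneity-Km}. One has to verify that the extensions supplied by Lemma~\ref{lem:NCR.extension-symbolU} respect the quasi-homogeneity built into $\cK_{\hat{m}+j}(\URn)$, that any two admissible choices differ only by a polynomial-in-$y$ smooth kernel which is absorbed into the remainder $R(x,y)$, and that the cutoffs $\chi(\xi/R_{j})$ used in the Borel summation do not spoil the quasi-homogeneity modulo smooth remainders. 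Once these compatibilities are in place, the Borel-summed kernel genuinely lies in $\cK^{\hat{m}}(\URn)$ and the correspondence between the two sides is independent of the choices made.
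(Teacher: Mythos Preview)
The paper does not actually supply a proof of this proposition: it is stated as a known characterization of \psidos\ and attributed to \cite{BG:CHM}, \cite{Ho:ALPDO3}, \cite{Me:APSIT}, \cite{Po:NCR}, with the exposition said to follow \cite{BG:CHM} and \cite{Po:NCR}. So there is no in-paper argument to compare your proposal against; what one can say is that your outline is precisely the standard route taken in those references, namely the termwise symbol/kernel correspondence of Lemma~\ref{lem:NCR.extension-symbolU} combined with a Borel-type summation and the elementary fact that the inverse Fourier transform in $\xi$ of a symbol of sufficiently negative order is $C^{N}$.

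Your sketch is sound and identifies the genuine subtlety, the non-uniqueness of the extension when $\hat m+j\in\N_{0}$ and the resulting logarithmic terms. Two small remarks on the execution. First, in the direction (i)$\Rightarrow$(ii) it is cleaner to take $K(x,y):=\check p_{\xi\to y}(x,y)$ for the \emph{full} symbol $p$, so that $k_{p(x,D)}(x,y)=K(x,x-y)$ on the nose, and then verify $K\in\cK^{\hat m}(\URn)$ by comparing with the $K_{\hat m+j}$; this avoids having to Borel-sum distributions directly and makes the remainder estimate transparent. Second, your cutoff $\chi(\xi/R_{j})$ acts on the symbol side, so its effect on $K_{\hat m+j}$ is a convolution in $y$ by a Schwartz function; you should say explicitly that this modification of $K_{\hat m+j}$ is smooth in $(x,y)$ and hence absorbed into the $C^{N}$ remainder in~\eqref{eq:NCR.cKm2}, which is exactly what makes the summed $K$ land in $\cK^{\hat m}(\URn)$. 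With these points made precise your argument goes through and matches the treatment in the cited sources.
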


The above description of \psidos\ allows us to determine the singularities near the diagonal of the Schwartz kernels of \psidos. In particular, we 
have: 

\begin{proposition}
    Let $P:C^{\infty}(M,\cE)\rightarrow C^{\infty}(M,\cE)$ be a \psido\ of integer order $m$. Then in local coordinates its 
    Schwartz kernel $k_{P}(x,y)$ has a behavior near 
the diagonal $y=x$ of the form
\begin{equation}
        k_{P}(x,y)=\sum_{-(m+n)\leq j \leq -1}a_{j}(x,x-y) -c_{P}(x) \log |y-x| +\op{O}(1),
         \label{eq:Log.behavior-kP}
\end{equation}
where $a_{j}(x,y)\in C^{\infty}(\URno)$ is homogeneous of degree $j$ with respect to $y$ and $c_{P}(x)\in C^{\infty}(U)$ is 
 given by 
\begin{equation}
    c_{P}(x)=(2\pi)^{-n}\int_{S^{n-1}}p_{-n}(x,\xi)d^{n-1}\xi. 
     \label{eq:NCR.formula-cP}
\end{equation}
\end{proposition}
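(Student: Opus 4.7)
The plan is to transport the problem from $k_{P}(x,y)$ to the off-diagonal kernel representative furnished by Proposition~\ref{prop:LS.characterisation-kernel} and then read off the singular behavior term by term from the kernel's asymptotic expansion. Working in a local chart, write
\begin{equation*}
  k_{P}(x,y) = K(x, x-y) + R(x,y),
\end{equation*}
with $K \in \cK^{\hat m}(U\times\Rn)$, $\hat m = -(m+n)$, and $R \in C^{\infty}(U\times U)$, and expand $K \sim \sum_{j\geq 0} K_{\hat m + j}$ with $K_{\hat m + j} \in \cK_{\hat m + j}(U\times\Rn)$. Setting $z := x-y$, the task is to identify the behavior of $K(x, z)$ as $z \to 0$. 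Observe that $R(x,y)$ is bounded, and by Definition~\ref{def:NCR.cKm2} the tail $K - \sum_{j\leq J} K_{\hat m + j}$ is of class $C^{N}$ for $J$ large enough; both contribute only to the $\op{O}(1)$ error. Since $m \in \Z$, every remaining homogeneous piece has integer degree.

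Next, I would split the finite sum according to the sign of the degree $l := \hat m + j$. For $j = 0, 1, \ldots, m+n-1$ one has $-(m+n) \leq l \leq -1$, so $l \notin \N_{0}$ and~(\ref{eq:NCR.log-homogeneity-Km}) reduces to a genuine homogeneity $K_{l}(x,\lambda z) = \lambda^{l}K_{l}(x,z)$. Thus $K_{l}(x, z)$ is smooth in $z \neq 0$ and homogeneous of degree $l$, producing the terms $a_{l}(x, x-y)$ of~(\ref{eq:Log.behavior-kP}). For $j \geq m+n+1$ the piece $K_{\hat m + j}(x, z)$ has positive integer degree (with a possible $\log$ factor), hence tends to $0$ as $z \to 0$ and is absorbed into the $\op{O}(1)$ term.

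The decisive piece is $K_{0}(x, z)$, arising for $j = m+n$ from the symbol component $p_{-n}$. Applying~(\ref{eq:NCR.log-homogeneity-Km}) with degree zero and $|\alpha|=0$ gives
\begin{equation*}
  K_{0}(x, \lambda z) = K_{0}(x, z) + c_{K,0}(x)\,\log \lambda, \qquad \lambda > 0,
\end{equation*}
so, setting $\lambda = |z|^{-1}$,
\begin{equation*}
  K_{0}(x, z) = K_{0}\!\left(x, \tfrac{z}{|z|}\right) - c_{K,0}(x)\,\log |z|,
\end{equation*}
whose first summand is a bounded smooth function on $U\times S^{n-1}$. By Lemma~\ref{lem:NCR.extension-symbolU}(1) applied with $\alpha = 0$ to $p_{-n}$, the coefficient $c_{K,0}(x)$ matches $c_{P}(x) = (2\pi)^{-n}\int_{S^{n-1}} p_{-n}(x,\xi)\,d^{n-1}\xi$. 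Combining this with the preceding steps gives~(\ref{eq:Log.behavior-kP}).

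The routine work is the bookkeeping of the asymptotic expansion; the one point that requires care is the normalization of the logarithmic coefficient. Concretely, one must reconcile the $-\log\lambda$ convention of the Fourier-transformed relation~(\ref{eq:NCR.log-homogeneity-kernel}) with the $+\log\lambda$ convention of~(\ref{eq:NCR.log-homogeneity-Km}) to confirm that the coefficient of $\log|y-x|$ comes out to $-c_{P}(x)$ with exactly the normalization~(\ref{eq:NCR.formula-cP}). Once these signs are tracked, the formula for $k_{P}(x,y)$ is an immediate consequence of Proposition~\ref{prop:LS.characterisation-kernel} and of the structural definition of $\cK^{\hat m}(U\times\Rn)$.
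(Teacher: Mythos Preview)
Your proposal is correct and follows exactly the route the paper sets up: the proposition is stated in the paper without an explicit proof, but all the ingredients you invoke---Proposition~\ref{prop:LS.characterisation-kernel} for the kernel representation, Definition~\ref{def:NCR.cKm2} for the asymptotic expansion, the log-homogeneity relation~(\ref{eq:NCR.log-homogeneity-Km}) at degree zero, and Lemma~\ref{lem:NCR.extension-symbolU}(1) for the identification of the coefficient---are precisely the preparatory results the paper assembles in the paragraphs leading up to this statement. Your bookkeeping of the three regimes (negative degree, degree zero, positive degree) and your caution about the sign convention between~(\ref{eq:NCR.log-homogeneity-kernel}) and~(\ref{eq:NCR.log-homogeneity-Km}) are both on point.
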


The description~(\ref{eq:Log.behavior-kP}) of the behavior of $k_{P}(x,y)$ depends on the choice of the local coordinates, but the coefficient 
$c_{P}(x)$ makes sense intrinsically, for we have: 

\begin{proposition}[\cite{CM:LIFNCG}]\label{prop:Log-sing.density}
    The coefficient $c_{P}(x)$ in~(\ref{eq:Log.behavior-kP}) makes sense globally on $M$ as an $\End \cE$-valued 1-density. 
\end{proposition}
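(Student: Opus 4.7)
The plan is to verify by direct computation that $c_P(x)$ transforms as an $\End\cE$-valued $1$-density under both changes of local coordinates and changes of local trivialization of $\cE$, and then to appeal to the uniqueness of the expansion~(\ref{eq:Log.behavior-kP}).

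First I would treat the change of coordinates. Let $\psi:\tilde U\to U$ be a diffeomorphism between open subsets of $\Rn$. The Schwartz kernel of $P$ in the $\tilde U$-chart is
$$\tilde k_P(\tilde x,\tilde y)= k_P(\psi(\tilde x),\psi(\tilde y))\,|\det d\psi(\tilde y)|.$$
I would substitute the expansion~(\ref{eq:Log.behavior-kP}) into the right-hand side and keep track only of the logarithmic contribution. Using the Taylor expansion $\psi(\tilde y)-\psi(\tilde x)=d\psi(\tilde x)(\tilde y-\tilde x)+O(|\tilde y-\tilde x|^{2})$, one obtains
$$\log|\psi(\tilde y)-\psi(\tilde x)| = \log|\tilde y-\tilde x| + \log\bigl|d\psi(\tilde x)\omega\bigr| + O(|\tilde y-\tilde x|),$$
with $\omega=(\tilde y-\tilde x)/|\tilde y-\tilde x|$. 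The second term is bounded (though not continuous on the diagonal), and the third is $O(|\tilde y-\tilde x|)$, so both are absorbed into the $O(1)$ remainder when multiplied by $c_P$. Likewise $|\det d\psi(\tilde y)|=|\det d\psi(\tilde x)|+O(|\tilde y-\tilde x|)$, and the extra factor yields only an $O(1)$ correction when multiplied by $\log|\tilde y-\tilde x|$. Reading off the coefficient of $-\log|\tilde y-\tilde x|$ in the resulting expansion and invoking the uniqueness of~(\ref{eq:Log.behavior-kP}) in the $\tilde U$-coordinates gives the transformation law
$$\tilde c_P(\tilde x)=c_P(\psi(\tilde x))\,|\det d\psi(\tilde x)|,$$
which is precisely how a $1$-density transforms. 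A brief check that each $a_j(\psi(\tilde x),\psi(\tilde x)-\psi(\tilde y))$ splits into a term homogeneous in $\tilde x-\tilde y$ plus an $O(1)$ remainder confirms that the homogeneous pieces do not contribute to the logarithmic coefficient.

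For the vector bundle aspect, under a change of local trivialization of $\cE$ with transition function $g$ the Schwartz kernel transforms as $k_P(x,y)\mapsto g(x)\,k_P(x,y)\,g(y)^{-1}$, and expanding $g(y)^{-1}=g(x)^{-1}+O(|y-x|)$ near the diagonal shows that the logarithmic coefficient transforms by endomorphism conjugation $c_P(x)\mapsto g(x)\,c_P(x)\,g(x)^{-1}$. Combined with the density transformation above, this shows that $c_P(x)$ patches globally into an $\End\cE$-valued $1$-density on $M$.

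The only delicate point is bookkeeping: one must verify that none of the correction terms introduced by Taylor expansion (the Jacobian correction, the direction-dependent $\log|d\psi(\tilde x)\omega|$, the reparametrised homogeneous terms, and the $O(|y-x|)$ appearing in the trivialization change) secretly carries a logarithmic singularity. Each such term is either $O(1)$ or $O(|y-x|)$, and the product of the latter with $\log|y-x|$ is $O(1)$, so every correction is safely absorbed into the $O(1)$ remainder and the logarithmic coefficient is determined purely by the density transformation law.
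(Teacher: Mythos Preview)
Your argument is correct and in fact supplies more detail than the paper itself provides. In the paper the proposition is not proved in full: the author simply records the transformation law
\[
c_{\phi^{*}P}(x)=|\phi'(x)|\,c_{P}(\phi(x))
\]
and refers to \cite{GVF:ENCG} and \cite{Po:NCR} for a detailed verification. Your approach---tracking the logarithmic singularity directly through the Schwartz-kernel transformation $\tilde k_{P}(\tilde x,\tilde y)=k_{P}(\psi(\tilde x),\psi(\tilde y))\,|\det d\psi(\tilde y)|$---is exactly one of the standard ways to establish that formula, and is in the spirit of the references cited. The alternative route (used in some of those references) is to work instead with the symbol formula~(\ref{eq:NCR.formula-cP}) and check how $\int_{S^{n-1}}p_{-n}(x,\xi)\,d^{n-1}\xi$ behaves under the symbol change-of-variables formula; your kernel-side computation has the advantage of avoiding those asymptotic symbol expansions entirely.

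One small point worth tightening: when you say each $a_{j}(\psi(\tilde x),\psi(\tilde x)-\psi(\tilde y))$ splits into a homogeneous piece plus an $O(1)$ remainder, what is really happening is that Taylor-expanding $\psi(\tilde x)-\psi(\tilde y)$ about $d\psi(\tilde x)(\tilde x-\tilde y)$ produces an asymptotic sum of terms homogeneous of degrees $j,\,j+1,\,j+2,\ldots$ in $\tilde x-\tilde y$, none of which is logarithmic; the terms of degree $\geq 0$ are then absorbed into the $O(1)$ remainder. Stating it this way makes it transparent that no hidden $\log$ contribution can leak out of the $a_{j}$'s.
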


The point is that if $\phi : U'\rightarrow U$ is a change of local coordinates then we have 
\begin{equation}
    c_{\phi^{*}P}(x)=|\phi'(x)|c_{P}(\phi(x)) \qquad \forall P\in \Psi^{m}(U),
%     \label{}
\end{equation}
which shows that $c_{P}(x)$ behaves like a 1-density (detailed proofs of this result can be found in \cite{GVF:ENCG}  and~\cite{Po:NCR}). 

Finally, we recall some definitions and properties of parity-class \psidos. 

\begin{definition}
1) We say that $P\in \Psi^{m}(M,\cE)$, $m \in \Z$, is odd-class 
if in local coordinates its symbol $p\sim 
    \sum_{j\geq 0}p_{m-j}$ is so that  $p_{m-j}(x,-\xi)=(-1)^{m-j}p_{m-j}(x,\xi)$ for all $j\geq 0$.\smallskip
    
2)     We say that $P\in \Psi^{m}(M,\cE)$, $m \in \Z$, is even-class 
if in local coordinates its symbol $p\sim 
    \sum_{j\geq 0}p_{m-j}$ is such that  $p_{m-j}(x,-\xi)=(-1)^{m-j+1}p_{m-j}(x,\xi)$  $\forall j\geq 0$.
\end{definition}

We let $\pdozodd(M,\cE)$ (resp.~$\pdozev(M,\cE)$) denote the space of odd-class (resp.~even-class) \psidos. Any differential 
operator  is odd-class and any parametrix of an odd-class (resp.~even-class) elliptic \psido\ is again odd-class \psido\ (resp.~even-class \psido). 
Furthermore, if $P$ and $Q$ are in $ \pdozodd(M,\cE)\cup \pdozev(M,\cE)$, then the operator $PQ$ is an odd-class \psido~(resp.~even-class \psido) if 
the parity classes of $P$ and $Q$  agree (resp.~don't agree). 
    
\begin{definition}
1)   We say that $P\in \Psi^{m}(M,\cE)$, $m \in \Z$, is of regular parity-class if $P$ is odd-class and $n$ is odd or if $P$ is even-class and  $n$ is 
   even.\smallskip
   
2) We that $P\in \Psi^{m}(M,\cE)$, $m \in \Z$, is of singular parity-class if $P$ is odd-class and $n$ is even, or if $P$ is even-class and  $n$ is odd.
\end{definition}
% n addition, we say that  an operator $P\in \pdoz(M,\cE)$ has \emph{regular parity-class} if in any local coordinates the homogeneous components of 
% its symbol have regular parity in the sense of Definition~\ref{def:Hom-ext.regular parity}. Equivalently, this means that $P$ is odd-class when $n$ is odd and 
% is even-class when $n$ is even..

We let $\pdozo(M,\cE)$ (resp.~$\pdozsing(M,\cE)$) denote the space of \psidos\ of regular  (resp.~singular) parity-class. Notice that the product of an even-class 
\psido\ and of a regular parity-class \psido\ is a regular parity-class \psido, as is the product of an even-class \psido\ and of a singular parity-class \psido. Furthermore, 
we have:

\begin{proposition}\label{eq:Even-odd,vanishing-cP}
    If $P\in \pdozo(M,\cE)$ then the density $c_{P}(x)$ vanishes. 
\end{proposition}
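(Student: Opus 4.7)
The plan is straightforward: exploit the explicit local formula $c_{P}(x)=(2\pi)^{-n}\int_{S^{n-1}}p_{-n}(x,\xi)d^{n-1}\xi$ from~(\ref{eq:NCR.formula-cP}) together with the parity condition on the homogeneous component $p_{-n}$ of the symbol. Since $c_{P}(x)$ is an intrinsic $\End\cE$-valued 1-density by Proposition~\ref{prop:Log-sing.density}, it suffices to verify vanishing in an arbitrary trivializing local chart.

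First I would isolate the component $p_{-n}$ in the asymptotic expansion $p\sim\sum_{j\geq 0}p_{m-j}$, corresponding to the index $j=m+n$. If $P$ is odd-class, the defining parity condition gives $p_{-n}(x,-\xi)=(-1)^{-n}p_{-n}(x,\xi)$, while if $P$ is even-class it gives $p_{-n}(x,-\xi)=(-1)^{-n+1}p_{-n}(x,\xi)$. In the regular parity-class setting, either $P$ is odd-class with $n$ odd—so the factor is $(-1)^{-n}=-1$—or $P$ is even-class with $n$ even—so the factor is $(-1)^{-n+1}=(-1)^{n+1}=-1$. In both cases one concludes
\begin{equation}
    p_{-n}(x,-\xi)=-p_{-n}(x,\xi),
\end{equation}
i.e., $p_{-n}(x,\cdot)$ is an odd function on $\Rno$.

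The second step is then immediate: the sphere $S^{n-1}$ is invariant under the antipodal map $\xi\mapsto-\xi$, and the surface measure $d^{n-1}\xi$ is preserved by this map, so the integral of the odd function $p_{-n}(x,\cdot)$ over $S^{n-1}$ vanishes. By~(\ref{eq:NCR.formula-cP}), this gives $c_{P}(x)=0$ in the chosen chart, and hence globally by the density interpretation. There is no real obstacle here, since the regular parity-class condition has been precisely tailored so that the degree-$(-n)$ symbol is antipodally antisymmetric.
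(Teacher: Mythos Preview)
Your argument is correct and is precisely the standard one: the regular parity-class condition forces $p_{-n}(x,\cdot)$ to be odd under the antipodal map, so its integral over $S^{n-1}$ vanishes by~(\ref{eq:NCR.formula-cP}). The paper itself states this proposition without proof, treating it as an immediate consequence of the definitions; your write-up supplies exactly the expected verification.
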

%  This shows that if $P$ is a regular parity-class \psido\ then the singularity~(\ref{eq:Log.behavior-kP}) 
%  of its Schwartz kernel does not contain a logarithmic term. 

\section{The Noncommutative residue and the canonical trace}
\label{sec:NCRCT}
In this section we recall the main definitions and properties of the noncommutative residue and of the canonical trace. (In addition to the original 
references \cite{Gu:NPWF}--\cite{Gu:RTCAFIO}, \cite{KV:GDEO} and \cite{Wo:LISA}--\cite{Wo:NCRF}, we refer to~\cite{Po:NCR} for 
a detailed exposition  along the lines below.)

% The latter appears as the analytic extension 
% of the usual trace to \psidos\ of non-integer order and to \psidos\ of regular parity-class and it induces the former as a residual trace on \psidos\ of 
% integer order. 

Let $\pdoi(M,\cE) = \cup_{\Re m < -n}\pdo^{m}(M,\cE)$ be the class of \psidos\ whose symbols are integrable with respect to the $\xi$-variable. 
If $P\in \pdoi(M,\cE)$ then 
the restriction  of its Schwartz kernel $k_{P}(x,y)$ to the diagonal of $M\times M$ defines a smooth $\End\cE$-valued 1-density $k_{P}(x,x)$.
As by assumption $M$ is compact we see that 
$P$ is a trace-class operator and we have
\begin{equation}
    \Tra (P) = \int_{M} \tr_{\cE}k_{P}(x,x).
%     \label{eq:}
\end{equation}

In fact, the map $P\rightarrow k_{P}(x,x)$  can be analytically extended to a map $P\rightarrow t_{P}(x)$ defined on the class $\pdocz(M,\cE)$ of 
non-integer order \psidos\ and on regular parity-class \psidos. More precisely, if $(P(z))_{z\in \C}$ is a holomorphic family of \psidos\ of 
non-integer orders as in~\cite{Gu:RTCAFIO} and~\cite{KV:GDEO}, then $(t_{P}(z))_{z\in \C}$ is a holomorphic family of $\End\cE$-values densities. 

Let $P\in \pdoz(M,\cE)$ and let $(P(z))_{z\in \C}$ be a family of \psidos\ such that $P(0)=P$ and $\ord P(z)=z+\ord P$. Then the map $z\rightarrow 
t_{P(z)}(x)$ has at worst a simple pole singularity near $z=0$ and we have
\begin{equation}
    \Res_{z=0}t_{P(z)}(x)=-c_{P}(x),
     \label{eq:NCRCT.residue-formula-tP}
\end{equation}
where $c_{P}(x)$ is the density defined by the logarithmic singularity of the Schwartz kernel of $P$ (cf.~Proposition~\ref{prop:Log-sing.density}). 
Furthermore, if $P$ is of regular parity-class then, under suitable conditions on the family $(P(z))$ (see, e.g.,~\cite{Pa:NCRCTLSCP}, \cite{Po:NCR}), we have
\begin{equation}
    \lim_{z\rightarrow 0}t_{P(z)}(x)=t_{P}(x).
\end{equation}

The \emph{canonical trace} is the functional on $\pdocz(M,\cE)\cup \pdozo(M,\cE)$ such that
\begin{equation}
    \TR P= \int_{M}\tr_{\cE}t_{P}(x) \qquad \forall P\in \pdocz(M,\cE)\cup \pdozo(M,\cE).
%     \label{}
\end{equation}
This is the unique analytic continuation to $\pdocz(M,\cE)\cup \pdozo(M,\cE)$ of the functional $\Tra$. Moreover, the following holds.

\begin{proposition}\label{thm:NCR.TR.global1}
1)  We have $\TR [P_{1},P_{2}]=0$  whenever $\ord P_{1}+\ord P_{2}\not\in\Z$.\smallskip
 
 2) $\TR$ vanishes on $[\pdozodd(M,\cE), \pdozo(M,\cE)]$ and $[\pdozev(M,\cE),\pdozsing(M,\cE)]$.
\end{proposition}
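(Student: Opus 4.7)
The plan is to reduce each tracial identity to the elementary vanishing of the operator trace on commutators of trace-class operators, by analytic continuation along a well-chosen holomorphic family of \psidos.

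For part~1, I would fix a positive self-adjoint invertible elliptic \psido~$Q$ of positive order and form the holomorphic family $P_{1}(z)=P_{1}Q^{-z}$, so that $P_{1}(0)=P_{1}$ and $\ord P_{1}(z)=\ord P_{1}-z\ord Q$. Since $\ord P_{1}+\ord P_{2}\not\in\Z$, there is an open neighborhood~$V$ of~$0$ on which $\ord P_{1}(z)+\ord P_{2}\not\in\Z$, hence $[P_{1}(z),P_{2}]\in\pdocz(M,\cE)$ throughout~$V$ and $z\mapsto\TR[P_{1}(z),P_{2}]$ is holomorphic there (this is precisely where the non-integrality hypothesis is used to avoid the pole structure of~(\ref{eq:NCRCT.residue-formula-tP})). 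For $\Re z$ large positive the commutator is trace-class, the products $P_{1}(z)P_{2}$ and $P_{2}P_{1}(z)$ are each trace-class, and cyclicity of the operator trace gives $\TR[P_{1}(z),P_{2}]=\Tra[P_{1}(z),P_{2}]=0$. The identity theorem propagates this vanishing to all of~$V$, in particular to $z=0$.

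For part~2 the strategy is identical, but the family must preserve parity-class at every integer value of~$z$. I would pick~$Q$ to be an \emph{odd-class} positive self-adjoint invertible \psido\ of order~$2$, for instance $Q=1+\Delta$ with $\Delta$ a Laplace-type operator (differential operators are odd-class). Iterated application of the multiplication rule for parity-classes and of the fact that parametrices of odd-class elliptic operators are odd-class yields that $Q^{-z}$ is odd-class for every integer~$z$, so $P_{1}(z)=P_{1}Q^{-z}$ stays in the parity-class of~$P_{1}$ at every integer point. A short case analysis using the parities of~$P_{2}$ and~$n$ then shows that $[P_{1}(z),P_{2}]$ lies in $\pdozo(M,\cE)$ at every integer~$z$, in both scenarios of~(2); at non-integer~$z$ it has non-integer order. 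By Proposition~\ref{eq:Even-odd,vanishing-cP} the density $c_{[P_{1}(z),P_{2}]}(x)$ therefore vanishes at every integer~$z$, and formula~(\ref{eq:NCRCT.residue-formula-tP}) shows that the a priori meromorphic function $z\mapsto t_{[P_{1}(z),P_{2}]}(x)$ has no poles, so $z\mapsto\TR[P_{1}(z),P_{2}]$ is entire. Since it vanishes for $\Re z$ large positive by the same trace-class argument as in~(1), it vanishes identically, and in particular at $z=0$.

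The only real obstacle is the bookkeeping that $[P_{1}(z),P_{2}]$ genuinely belongs to $\pdozo(M,\cE)$ at every integer~$z$ in every subcase of~(2); once this is verified the entire argument collapses to invoking the identity theorem, the analytic construction of $Q^{-z}$ via Seeley's functional calculus being standard and needed only at integer powers for the parity check.
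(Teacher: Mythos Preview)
The paper does not actually supply a proof of this proposition: it is stated in Section~\ref{sec:NCRCT} as one of the properties of the canonical trace being \emph{recalled} from the literature (with references to \cite{Gu:RTCAFIO}, \cite{KV:GDEO}, \cite{Pa:NCRCTLSCP}, \cite{Po:NCR}). Your analytic-continuation argument is precisely the standard proof from those references, so in spirit you are doing exactly what the paper has in mind.

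That said, there is one genuine slip in your treatment of part~(2). With $Q$ of order~$2$ the commutator $[P_{1}Q^{-z},P_{2}]$ has order $\ord P_{1}+\ord P_{2}-2z$; since $\ord P_{1}+\ord P_{2}\in\Z$ here, this order is an integer not only at integer~$z$ but also at every half-integer $z\in\tfrac12+\Z$. Your assertion that ``at non-integer~$z$ it has non-integer order'' is therefore false at the half-integers, and your entireness claim is not justified as written. (Whether $Q^{-z}$ lies in a parity class at half-integer~$z$ is not addressed by the multiplication rules the paper records.)

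The repair is painless and in fact simplifies the argument: you do not need to show the function is entire. The map $z\mapsto\TR[P_{1}Q^{-z},P_{2}]$ is meromorphic on all of~$\C$; it vanishes on the open half-plane where the commutator is trace-class; a meromorphic function vanishing on an open set is identically zero, hence in particular has no poles and equals~$0$ at $z=0$. The parity-class verification at integer~$z$ is then only needed to identify the regular value at $z=0$ with $\TR[P_{1},P_{2}]$ via the continuity property $\lim_{z\to0}t_{P(z)}(x)=t_{P}(x)$ for regular parity-class~$P$, and for that it suffices to know the commutator is regular parity-class at $z=0$ itself---which is exactly the hypothesis. The same remark streamlines part~(1): rather than worrying about connecting a neighborhood of~$0$ to the region $\Re z\gg 0$ through the holomorphy domain, simply invoke meromorphy on~$\C$.
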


Next, the noncommutative residue of an operator $P \in \pdoz(M,\cE)$ is
\begin{equation}
     \Res P =\int_{M}\tr_{\cE} c_{P}(x).
%     \label{eq:}
\end{equation}
Because of~(\ref{eq:NCR.formula-cP}) we recover the usual definition of the noncommutative residue. Moreover, if $(P(z))_{z\in \C}$ is a holomorphic family 
of \psidos\ such that $P(0)=P$ and $\ord P(z)=z+\ord P$, then by~(\ref{eq:NCRCT.residue-formula-tP}) we have
\begin{equation}
    \Res_{z=0} \TR P(z)= - \Res P
     \label{eq:NCRCT.residue-formula-global}
\end{equation}

Notice that the noncommutative residue vanishes on \psidos\ of integer order~$\leq -(n+1)$, including smoothing operators, and it also vanishes on \psidos\ 
of regular parity-class. In addition, using~(\ref{eq:NCRCT.residue-formula-global}) we get: 

\begin{proposition}\label{prop:NCRCT.trace-NCR} 
 The noncommutative residue is a trace on the algebra $\pdoz(M,\cE)$. 
\end{proposition}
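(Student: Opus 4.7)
The plan is to deduce this from Proposition~\ref{thm:NCR.TR.global1}.1 via the residue formula~(\ref{eq:NCRCT.residue-formula-global}). Since $\Res$ is linear, showing it is a trace on $\pdoz(M,\cE)$ is equivalent to establishing that $\Res[P_{1},P_{2}]=0$ for any $P_{1},P_{2}\in \pdoz(M,\cE)$.

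First I would fix a positive, invertible, elliptic \psido\ $\Lambda \in \Psi^{1}(M,\cE)$ and form the holomorphic family $P_{1}(z):= P_{1}\Lambda^{z}$, so that $P_{1}(0)=P_{1}$ and $\ord P_{1}(z) = z+\ord P_{1}$. Then $([P_{1}(z),P_{2}])_{z\in\C}$ is a holomorphic family of \psidos\ whose value at $z=0$ is $[P_{1},P_{2}]$ and whose order is $z+\ord P_{1}+\ord P_{2}$.

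For $z\in \C\setminus\Z$ this order is not an integer, so Proposition~\ref{thm:NCR.TR.global1}.1 yields $\TR[P_{1}(z),P_{2}]=0$. The map $z\mapsto \TR[P_{1}(z),P_{2}]$ is meromorphic near $z=0$ and vanishes on the open dense set $\C\setminus\Z$, hence vanishes identically; in particular its residue at $z=0$ is zero. Applying~(\ref{eq:NCRCT.residue-formula-global}) to the holomorphic family $[P_{1}(z),P_{2}]$ then gives
\begin{equation*}
    \Res[P_{1},P_{2}] = -\Res_{z=0}\TR[P_{1}(z),P_{2}] = 0,
\end{equation*}
which is the desired conclusion.

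The main (and essentially only) obstacle is ensuring that the holomorphic family $\Lambda^{z}$ with $\ord \Lambda^{z}=z$ is indeed available, so that $P_{1}\Lambda^{z}$ has the required analytic properties. This is the classical Seeley/Guillemin construction of complex powers, which already underlies the very definition of the analytic continuation $\TR$ recalled in Section~\ref{sec:NCRCT}, so no additional work is needed beyond invoking it.
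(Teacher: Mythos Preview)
Your proof is correct and matches the paper's own argument: the paper simply says ``using~(\ref{eq:NCRCT.residue-formula-global}) we get'' the proposition, and you have spelled out exactly the intended computation, combining the residue formula with the vanishing of $\TR$ on non-integer order commutators from Proposition~\ref{thm:NCR.TR.global1}. No further comment is needed.
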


Next, the trace properties of the canonical trace and of the noncommutative residue mentioned in Proposition~\ref{thm:NCR.TR.global1} and 
Proposition~\ref{prop:NCRCT.trace-NCR}  characterize these functionals. First, we have: 

\begin{theorem}[\cite{Wo:PhD}, \cite{Gu:RTCAFIO}]\label{thm:NCRCT.unicity-NCR} 
 If $M$ connected then any trace on $\pdoz(M,\cE)$ is a constant multiple of the noncommutative residue. 
\end{theorem}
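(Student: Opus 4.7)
The plan is to reduce the statement to showing that the kernel of $\Res$ on $\pdoz(M,\cE)$ coincides with the commutator subspace $[\pdoz(M,\cE), \pdoz(M,\cE)]$. Once this is established, choose any $P_0 \in \pdoz(M,\cE)$ with $\Res P_0 \neq 0$ (one exists, e.g., a \psido\ of order $-n$ whose symbol of degree $-n$ has non-zero average over $S^{n-1}$ at some point); then for every $P$, the operator $P - (\Res P/\Res P_0) P_0$ has vanishing residue, hence lies in the commutator subspace, and any trace $\tau$ on $\pdoz(M,\cE)$ must satisfy $\tau(P) = (\tau(P_0)/\Res P_0) \Res P$.

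First I would localize via a partition of unity. Pick a finite cover $\{U_i\}$ of $M$ by trivializing charts with a subordinate partition of unity $\{\phi_i^2\}$, $\sum_i \phi_i^2 = 1$. Then
\begin{equation*}
P \;=\; \sum_i \phi_i^2 P \;=\; \sum_i \phi_i P \phi_i + \sum_i [\phi_i, \phi_i P],
\end{equation*}
since $[\phi_i, \phi_i P] = \phi_i^2 P - \phi_i P \phi_i$. The second sum lies in $[\pdoz(M,\cE), \pdoz(M,\cE)]$, and each $\phi_i P \phi_i$ is supported in $U_i$. Thus it suffices to treat operators supported in a single trivializing chart $U$.

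Next I would invoke Proposition~\ref{prop:TracesZ.commutators-U}: any integer-order \psido\ supported in $U$ is, modulo a sum of commutators of compactly supported \psidos, a scalar multiple $c_U \cdot P_U$ of a fixed reference operator $P_U$ supported in $U$ with $\Res P_U \neq 0$. Since $\Res$ is itself a trace (Proposition~\ref{prop:NCRCT.trace-NCR}), applying it to both sides forces $c_U = \Res P/\Res P_U$. To unify the references $P_{U_i}$ across charts into one global $P_0$, I use the connectedness of $M$: if $U_i \cap U_j \neq \emptyset$, pick an auxiliary \psido\ $Q_{ij}$ with $\Res Q_{ij} \neq 0$ supported in $U_i \cap U_j$; applying the previous step inside $U_i$ to $P_{U_i} - (\Res P_{U_i}/\Res Q_{ij}) Q_{ij}$ (which has zero residue) shows that $P_{U_i}$ and $Q_{ij}$ are proportional modulo commutators with the expected ratio, and similarly inside $U_j$ this relates $Q_{ij}$ to $P_{U_j}$. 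Since $M$ is connected, any two $U_i$'s are joined by a finite chain of overlapping charts, so all $P_{U_i}$'s are congruent, modulo commutators in $\pdoz(M,\cE)$, to a residue-determined multiple of one single fixed $P_0$. Combined with the localization step this yields $P \equiv (\Res P/\Res P_0) P_0$ modulo $[\pdoz(M,\cE), \pdoz(M,\cE)]$.

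The main obstacle is the local Proposition~\ref{prop:TracesZ.commutators-U} itself, which does the substantive work of realizing every integer-order \psido\ in a chart as a sum of commutators modulo a multiple of a fixed residue-nonzero operator; this in turn rests on Lemma~\ref{lem:TracesZ.smoothing-operators} for the smoothing part and on a symbol/Schwartz-kernel computation for the polyhomogeneous part. Once those are granted, the partition-of-unity localization and the connectedness argument above are essentially formal, and connectedness is exactly what is needed to upgrade a local-chart reduction to a global statement with a single residue-proportional constant.
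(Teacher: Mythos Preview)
Your proposal is correct and follows essentially the same strategy as the paper: both rely on Proposition~\ref{prop:TracesZ.commutators-U} as the key local input, use connectedness to glue the local reductions, and globalize via a partition of unity. The organizational difference is that you first identify $\ker(\Res)$ with the commutator subspace and then read off the trace statement, whereas the paper works directly with the trace (showing $\tau=c_U\Res$ on each chart, proving the constant $c_U$ is independent of $U$ by an open--closed argument, and then globalizing), deriving the commutator characterization afterwards as Corollary~\ref{cor:TracesZ.commutators}. Your localization via $P=\sum_i\phi_iP\phi_i+\sum_i[\phi_i,\phi_iP]$ is a clean alternative to the paper's $P=\sum_i\varphi_iP\psi_i+R$ with smoothing remainder, and your chain-of-overlapping-charts argument is an equivalent substitute for the paper's open--closed argument; neither variant buys anything essential over the other.
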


Concerning the canonical trace the following holds.  

\begin{theorem}[\cite{MSS:UKVT}, \cite{Pa:NCRCTLSCP}]\label{thm:NCRCT.unicity-TR}
    1) Any linear map $\tau:\pdocz(M,\cE)\rightarrow \C$ vanishing on $[C^{\infty}(M), \pdocz(M,\cE)]$ and $[\psinf(M,\cE),\psinf(M,\cE)]$ 
    is a constant multiple of the canonical trace.\smallskip
    
    2) Any linear map $\tau:\pdozo(M,\cE)\rightarrow \C$ vanishing on $[C^{\infty}(M), \pdozo(M,\cE)]$ and $[\psinf(M,\cE),\psinf(M,\cE)]$ 
    is a constant multiple of the canonical trace.
\end{theorem}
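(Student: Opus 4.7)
The plan is to derive both parts of Theorem 2.12 from a single structural result combined with a classical statement about the commutator quotient of smoothing operators. The main structural result (referred to as Proposition~3.X in the introduction) asserts:

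\textbf{Main reduction.} Every $P\in \pdocz(M,\cE)$ (resp.\ every $P\in \pdozo(M,\cE)$) can be written as
\begin{equation*}
P=\sum_{i=1}^{N}[f_{i},Q_{i}]+R,
\end{equation*}
where $f_{i}\in C^{\infty}(M)$, the $Q_{i}$ lie in $\pdocz(M,\cE)$ (resp.\ in $\pdozo(M,\cE)$), and $R\in\psinf(M,\cE)$. Assuming this, the proof is immediate: by hypothesis $\tau$ kills each $[f_{i},Q_{i}]$, so $\tau(P)=\tau(R)$, and it suffices to identify $\tau|_{\psinf(M,\cE)}$ with a constant multiple of the operator trace $\Tra$. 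Once this is done, I invoke Proposition~2.10 to see that $\TR$ also vanishes on the $[f_{i},Q_{i}]$ (since $\ord f_i+\ord Q_i\notin\Z$ in case~1, and $f_{i}\in\pdozodd(M,\cE)$, $Q_{i}\in\pdozo(M,\cE)$ in case~2), so $\TR(P)=\Tra(R)$ and the proportionality $\tau=c\,\TR$ follows.

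\textbf{Proving the reduction.} I would first use a partition of unity subordinate to trivializing charts, writing $P=\sum\phi_{\alpha}P\psi_{\beta}$ so that the off-diagonal terms are smoothing and each remaining piece is a compactly supported \psido\ in a chart $U\subset\R^{n}$ with symbol $p\sim\sum_{k\geq 0}p_{m-k}$. The key local identity is
\begin{equation*}
[x_{j},q(x,D)]=\tfrac{1}{i}(\partial_{\xi_{j}}q)(x,D),
\end{equation*}
so it is enough to write $p$ asymptotically as a finite sum $\sum_{j}\partial_{\xi_{j}}q_{j}$ with $q_{j}$ of the appropriate order and parity class. Euler's identity gives, for any homogeneous $p_{m-k}$ with $m-k+n\neq 0$,
\begin{equation*}
p_{m-k}=\frac{1}{m-k+n}\sum_{j=1}^{n}\partial_{\xi_{j}}\!\bigl(\xi_{j}p_{m-k}\bigr),
\end{equation*}
and an easy check using the parity-class definitions shows that $\xi_{j}p_{m-k}$ has the correct parity of degree $m-k+1$. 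In the non-integer order case this works for every $k$; in the regular parity-class case the only obstruction is $k=m+n$, but then $p_{-n}$ satisfies the vanishing condition $c_{P}\equiv 0$ by Proposition~2.6, equivalently $\int_{S^{n-1}}p_{-n}=0$, which by Lemma~1.2 exactly enables writing $p_{-n}$ as $\sum_{j}\partial_{\xi_{j}}q_{j}$ with $q_{j}$ homogeneous of degree $-n+1$ and of the required parity. Finally, multiplying the local coordinate function $x_{j}$ by a cutoff supported in the chart yields $f_{i}\in C^{\infty}(M)$ so that $[x_{j},Q_{j}]=[f_{i},Q_{i}]$ modulo smoothing. Summing over $k$ by Borel's asymptotic summation procedure introduces an additional smoothing error absorbed into $R$.

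\textbf{Traces on smoothing operators.} It remains to establish that any linear form $\sigma$ on $\psinf(M,\cE)$ vanishing on $[\psinf(M,\cE),\psinf(M,\cE)]$ is a constant multiple of $\Tra$. Fix a smooth section $\phi$ with $\langle\phi,\phi\rangle=1$ and let $R_{\phi}=|\phi\rangle\langle\phi|$. For any rank-one smoothing operator $|e\rangle\langle f|$ the identity
\begin{equation*}
\bigl[\,|e\rangle\langle\phi|\,,\,|\phi\rangle\langle f|\,\bigr]=|e\rangle\langle f|-\langle f,e\rangle\,R_{\phi}
\end{equation*}
shows that $|e\rangle\langle f|\equiv\Tra(|e\rangle\langle f|)\,R_{\phi}$ modulo a single commutator of smoothing operators. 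A standard matrix-entry argument with respect to a smooth orthonormal basis (eigenfunctions of some Laplacian) extends this to arbitrary smoothing operators: every $R\in\psinf(M,\cE)$ is congruent, modulo sums of $\psinf$-commutators, to $\Tra(R)R_{\phi}$. Hence $\sigma(R)=\Tra(R)\,\sigma(R_{\phi})$, proving the claim with $c=\sigma(R_{\phi})$.

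\textbf{Main obstacle.} The step requiring most care is the handling of the degree $-n$ piece in the parity-class case, where the Euler-type inversion fails and one must invoke the vanishing of the local residue density (Proposition~2.6) together with the homogeneous-extension criterion of Lemma~1.2 to recover the symbol as a $\xi$-divergence. The identification of $\tau$ with a multiple of $\Tra$ on $\psinf(M,\cE)$ is classical but also requires a modest argument, since one does not a priori know that $\tau$ is continuous in any natural topology.
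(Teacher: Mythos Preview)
Your proposal is correct and follows essentially the same strategy as the paper: reduce modulo $[C^{\infty}(M),\,\cdot\,]$ to a smoothing remainder using Euler's identity (with special care at degree $-n$ in the parity-class case, where $c_{P}=0$), then invoke the fact that every trace on $\psinf(M,\cE)$ is a multiple of $\Tra$. The paper differs only in presentation: it handles the degree-$(-n)$ term on the Schwartz-kernel side via the explicit identity $K_{0}=\sum_{j} y_{j}\cdot(y_{j}|y|^{-2}K_{0})$ for the homogeneous degree-$0$ kernel (Lemma~\ref{lem:Traces.criterion-logarithmic-kernel}) rather than asserting a symbol-side $\xi$-divergence, and it simply quotes the smoothing-operator trace result (Lemma~\ref{lem:TracesCZ.smoothing-operators}) rather than sketching your rank-one argument---which, as you note, does not by itself reach general smoothing operators without either continuity or the stronger two-commutator statement.
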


In addition to the noncommutative residue, on zero'th order \psidos\ we have many other traces. For an operator $P\in \Psi^{0}(M,\cE)$ the zero'th 
order symbol uniquely defines a section $\sigma_{0}(P)\in C^{\infty}(S^{*}M,\End \cE)$ 
(for the sake of brevity we also denote by $\cE$ its pushforward by the canonical projection of $S^{*}M$ onto $M$). Then \emph{any} linear form $L$ 
on $C^{\infty}(S^{*}M)$ gives rise to a trace $\tau_{L}$ on $\Psi^{0}(M,\cE)$ defined by the formula 
\begin{equation}
    \tau_{L}(P):=L[\tr_{\cE}\sigma_{0}(P)] \qquad \forall P\in \Psi^{0}(M,\cE).
%     \label{¥}
\end{equation}
Such a trace is called a \emph{leading symbol trace}. 

% Bearing this in mind we have: 

\begin{theorem}[\cite{Wo:PhD}, \cite{LP:UMDEPDO}]\label{thm:NCRCT.Traces0}
    Suppose that $M$ is connected and has dimension $\geq 2$. Then any trace on $\Psi^{0}(M,\cE)$ can be uniquely written as the sum of a leading symbol trace 
    and of a constant multiple of the noncommutative residue. 
\end{theorem}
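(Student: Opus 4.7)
The plan is to adjust $\tau$ by a scalar multiple of the noncommutative residue so that the difference vanishes on $\Psi^{-1}(M,\cE)$, and then identify the induced functional on the principal-symbol algebra as a leading symbol trace.

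\emph{Step 1: Pinning down the residue coefficient.} Fix $P_0 \in \Psi^{-1}(M,\cE)$ supported in a single coordinate chart with $\Res P_0 \neq 0$, and set $c := \tau(P_0)/\Res(P_0)$. The crucial input is Proposition~\ref{prop:Traces0.commutators-U}, which for $\dim M \geq 2$ refines Proposition~\ref{prop:TracesZ.commutators-U} by allowing the commutators to be taken within $\Psi^{\leq 0}$: any $P \in \Psi^{-1}_{\op{c}}(U,\C^r)$ supported in a local chart $U$ admits a decomposition $P = \sum_i [A_i, B_i] + \lambda P_0$ with $A_i, B_i \in \Psi^{\leq 0}_{\op{c}}(U,\C^r)$ and $\lambda \in \C$. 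Since both $\tau$ and $\Res$ are traces on $\Psi^0(M,\cE)$, applying them to this decomposition yields $\tau(P) = c\,\Res(P)$. A standard partition of unity argument, writing $P \equiv \sum_i \phi_i P \phi_i$ modulo $\Psi^0$-commutators for a partition $\{\phi_i^2\}$ subordinate to charts, then extends the identity $\tau = c\Res$ to all of $\Psi^{-1}(M,\cE)$. Connectedness of $M$ enters here to ensure a single operator $P_0$ suffices across all charts.

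\emph{Step 2: Descent to the leading symbol algebra.} The adjusted trace $\tau' := \tau - c\Res$ vanishes on $\Psi^{-1}(M,\cE)$, which is precisely the kernel of the principal symbol map $\sigma_0 : \Psi^0(M,\cE) \to C^{\infty}(S^{*}M, \End\cE)$. Hence $\tau'$ descends to a linear functional $\tilde\tau$ on the quotient $\Psi^0/\Psi^{-1} \cong C^{\infty}(S^{*}M, \End\cE)$. Since $\sigma_0$ is a surjective algebra homomorphism satisfying $\sigma_0([A,B]) = [\sigma_0(A), \sigma_0(B)]$, the trace property of $\tau'$ translates into $\tilde\tau([f,g]) = 0$ for all $f, g \in C^{\infty}(S^{*}M, \End\cE)$.

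\emph{Step 3: From $\End\cE$ to scalars, and uniqueness.} It remains to show that every linear functional $\tilde\tau$ on $C^{\infty}(S^{*}M, \End\cE)$ vanishing on pointwise commutators factors as $\tilde\tau = L \circ \tr_\cE$ for a unique $L$ on $C^{\infty}(S^{*}M)$. Via a partition of unity on $S^{*}M$ and local trivializations of $\cE$, this reduces to the algebraic fact that every traceless matrix in $M_r(\C)$ is a sum of commutators, so every $s \in C^{\infty}(S^{*}M, \End\cE)$ with $\tr_\cE s = 0$ is a sum of commutators. This yields the desired decomposition $\tau = \tau_L + c\Res$. Uniqueness follows directly: if $\tau_L + c\Res = 0$, evaluating on $P_0$ (for which $\sigma_0(P_0) = 0$ and $\Res P_0 \neq 0$) forces $c = 0$, after which $L = 0$ follows from the surjectivity of $\tr_\cE \circ \sigma_0 : \Psi^0(M,\cE) \to C^{\infty}(S^{*}M)$. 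The heart of the proof and the main obstacle is Step~1, whose validity requires $\dim M \geq 2$: in dimension $1$ the symbol of degree $-1$ of a zero'th order \psido\ has an intrinsic meaning on $S^{*}M$ and gives rise to the subleading symbol traces discussed in the introduction, precluding the $\Psi^{\leq 0}$-commutator decomposition there.
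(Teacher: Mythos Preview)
Your proposal is correct and follows essentially the same approach as the paper's proof: both use Proposition~\ref{prop:Traces0.commutators-U} together with connectedness to pin down the residue coefficient on $\Psi^{-1}$, then reduce the remaining trace to a functional on the principal-symbol algebra via the fact that traceless sections of $\End\cE$ are sums of commutators (this is the content of the paper's Lemma~\ref{lem:Traces0.sigma-commutators}). The only cosmetic difference is that the paper carries out the last step at the operator level using the explicit lifts $P_{\sigma}$, whereas you work directly on the quotient $\Psi^{0}/\Psi^{-1}\cong C^{\infty}(S^{*}M,\End\cE)$.
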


We refer to Section~\ref{sec:TracesZ01} for the description of the traces on $\Psi^{0}(M,\cE)$ when $n=1$.

\section{Uniqueness of the canonical trace}
\label{sec:Traces-TR}
In this section we shall give a proof of Theorem~\ref{thm:NCRCT.unicity-TR} about the uniqueness of the canonical trace. 
First, we have: 
% will characterize the sum of commutators and traces on non-integer order \psidos\ and regular parity class \psidos. In particular, 
% we will prove  as a consequence of a series of lemmas expressing non-integer order \psidos\ and regular parity 
% class \psidos\ as sums of commutators modulo smoothing operators. The first of these lemmas is: 

\begin{lemma}\label{lem:Traces.criterion-logarithmic-kernel}
    Let  $K(x,y)\in \cK_{0}(\Rn\times \Rn)$ be homogeneous of degree $0$ with respect to $y$ and let $P\in \Psi^{-n}(\Rn)$ be the \psido\ with 
    Schwartz kernel $k_{P}(x,y)=K(x,x-y)$. Then $P$ can be written in the form
%     \begin{equation}
%         k_{P}(x,y)=K_{0}(x,x-y),
%          \label{eq:Traces.logarithmic-kernel}
%     \end{equation}
 \begin{equation}
    P=  [ x_{0}, P_{0}]+\ldots+ [x_{n}, P_{n}], 
     \label{eq:Traces.sum-commutators-K0}
\end{equation}    
   with $P_{1}, \ldots, P_{n}$ in $\Psi^{-n+1}(\Rn)$. Moreover, if $K_{0}(x,-y)=-K_{0}(x,y)$ then 
   $P_{1},\ldots,P_{n}$  can be chosen to be of regular parity-class.
\end{lemma}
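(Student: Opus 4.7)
The Schwartz kernel of a commutator $[x_j, Q]$ is $(x_j - y_j)\, k_Q(x,y)$, so with $z := x - y$ it suffices to exhibit kernels $K_1, \dots, K_n \in \cK_{-1}(\Rn \times \Rn)$ satisfying
\begin{equation*}
\sum_{j=1}^n z_j\, K_j(x,z) = K(x,z)
\end{equation*}
as distributions, and then to invoke Proposition~\ref{prop:LS.characterisation-kernel} to obtain \psidos\ $P_j \in \Psi^{-n+1}(\Rn)$ with Schwartz kernels $K_j(x, x-y)$. The natural candidate, defined on $z \ne 0$, is
\begin{equation*}
K_j(x,z) := \frac{z_j}{|z|^2}\, K(x,z),
\end{equation*}
which is smooth in $x$ and homogeneous of degree $-1$ in $z$.

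Assuming $n \ge 2$, the degree $-1$ is strictly greater than $-n$, so Lemma~\ref{lem:Homogenenous-extension} produces a unique homogeneous tempered extension of $K_j(x,\cdot)$ to $\Rn$, depending smoothly on $x$; hence $K_j \in \cK_{-1}(\Rn \times \Rn)$ and $P_j \in \Psi^{-n+1}(\Rn)$ by Proposition~\ref{prop:LS.characterisation-kernel}. The desired identity $P = \sum_j [x_j, P_j]$ then amounts to an equality of Schwartz kernels. They agree on $z \ne 0$ by construction, and both sides are degree-$0$ homogeneous distributions on $\Rn$ in $z$, smooth off the origin; since $0 > -n$, Lemma~\ref{lem:Homogenenous-extension} forces them to coincide on all of $\Rn$.

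For the parity refinement, assume $K(x,-z) = -K(x,z)$. Then $K_j(x,-z) = K_j(x,z)$ by inspection, and the unique homogeneous extension inherits this evenness. Inverse Fourier duality makes the principal symbol $p_{j,-n+1}(x,\xi) = \hat K_{j,\, z \to \xi}(x,\xi)$ even in $\xi$, while all lower-order homogeneous components vanish because $K_j$ has none. Since $(-1)^{-n+1} = 1$ exactly when $n$ is odd and $(-1)^{-n+2} = 1$ exactly when $n$ is even, an even-in-$\xi$ symbol at order $-n+1$ is odd-class in odd dimension and even-class in even dimension, i.e., of regular parity-class in both cases.

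The main obstacle I foresee is the one-dimensional case $n = 1$, where the target homogeneity $-1$ coincides with $-n$ and Lemma~\ref{lem:Homogenenous-extension} only guarantees an extension with a possible logarithmic defect in the scaling law. One would need either to check that the relevant coefficients $c_\alpha(K_j)$ vanish---which the off-origin identity $\sum_j z_j K_j = K$ combined with the purity of $K$ makes plausible---or to absorb a smoothing correction into the definition of the $P_j$, a harmless modification for the traces-on-\psidos\ applications pursued in the rest of the paper.
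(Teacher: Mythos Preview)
Your argument is essentially identical to the paper's: define $K_j(x,z)=z_j|z|^{-2}K(x,z)$, note it is homogeneous of degree $-1$ hence lies in $\cK_{-1}$, read off $P_j\in\Psi^{-n+1}(\Rn)$ via Proposition~\ref{prop:LS.characterisation-kernel}, verify $\sum_j z_jK_j=K$ on the kernel side, and run the same parity computation. The paper does not isolate the $n=1$ issue you flag---it simply asserts $K^{(j)}\in\cK_{-1}$ without discussing the extension across the origin---so your treatment is, if anything, slightly more careful on this point.
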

\begin{proof}
  For $j=1,\ldots, n$ and for $(x,y)\in \Rn\times \Rno$ set $K^{(j)}(x,y)= y_{j}|y|^{-2}K(x,y)$. 
% \begin{equation}
%     K^{(j)}(x,y)= y_{j}|y|^{-2}K(x,y), \qquad .
% \end{equation}
 As $K^{(j)}(x,y)$ is smooth for $y\neq 0$ and is homogeneous with respect to $y$ of 
 degree~$-1$ we see that $K^{(j)}(x,y)$ is an element of $\cK_{-1}(\R \times \R)$. Therefore, 
 by Proposition~\ref{prop:LS.characterisation-kernel} the operator $P_{j}$ with 
 kernel $k_{P_{j}}(x,y)=K^{(j)}(x,x-y)$ is a \psido\ of order $-n+1$. 
 Moreover, observe that 
 the  Schwartz kernel of  $\sum_{j=1}^{n}[x_{j}, P_{j}]$ is
  \begin{equation}
   \sum_{1\leq j \leq n} (x_{j}-y_{j})^{2}|x-y|^{-2}K(x,x-y)=K(x,x-y)=k_{P}(x,y).
  \end{equation}
Hence $P=  [ x_{0}, P_{0}]+\ldots+ [x_{n}, P_{n}]$.

Assume now that $K(x,-y)=-K(x,y)$. Then we have $K^{(j)}(x,-y)=K^{(j)}(x,y)$. By Proposition~\ref{prop:LS.characterisation-kernel} 
the symbol of $P_{j}$ is $p^{(j)}(x,\xi)\sim 
p_{-n+1}^{(j)}(x,\xi)$ with $p_{-n+1}^{(j)}(x,\xi)=(K^{(j)})^{\wedge}_{\yxi}(x,\xi)$, so we have
\begin{equation}
  p_{-n+1}^{(j)}(x,-\xi)=  (K^{(j)}(x,-y))^{\wedge}_{\yxi}(x,\xi)=(K^{(j)})^{\wedge}_{\yxi}(x,\xi)=p_{-n+1}(x,\xi). 
\end{equation}
Since $1=(-1)^{-n+1}$ when $n$ is odd and $1=(-1)^{-n+1+1}$ when $n$ is even, this shows that $P_{j}$ is odd-class when $n$ is odd and is even-class when 
$n$ is even. In any case $P_{j}$ is  of regular parity-class. The lemma is thus proved.
\end{proof}

\begin{lemma}[\cite{FGLS:NRMB}, \cite{Gu:GLD}]\label{lem:Traces.sum-commutators-Rn1}
    Let $P\in \pdo^{m}(\Rn)$, $m\in \C$, and assume that either $m$ is not an integer~$\geq -n$, or we have $c_{P}(x)=0$. 
    Then $P$ can be written in the form
\begin{equation}
    P=  [x_{1}, P_{1}]+\ldots+ [x_{n}, P_{n}] +R,
     \label{eq:Traces.sum-commutators-cP0}
\end{equation}    
   with $P_{1}, \ldots, P_{n}$ in $\Psi^{m+1}(\Rn)$ and $R\in \psinf(\R^{n})$. Furthermore, if $P$ is in $\pdozo(\Rn)$ then 
   $P_{1},\ldots,P_{n}$ can be chosen to be in $\pdozo(\Rn)$ as well. 
\end{lemma}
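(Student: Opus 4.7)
My plan is to construct the $P^{(j)}$ homogeneity-by-homogeneity at the symbol level, using an explicit algebraic formula where possible and invoking Lemma~\ref{lem:Traces.criterion-logarithmic-kernel} at the single obstructive homogeneity $s = -n$. Recall that for a \psido\ $Q$ of order $s+1$ with symbol $q$, a short computation gives $[x_j, Q] = i(\partial_{\xi_j} q)(x,D)$. Writing $P = p(x,D) + R_0$ with $R_0 \in \psinf(\R^n)$ and $p \sim \sum_{k\geq 0} p_{m-k}$, it therefore suffices to construct, for each $s = m-k$, a symbol $p^{(j)}_{s+1}$ homogeneous of degree $s+1$ such that
\begin{equation*}
    i\sum_{j=1}^{n} \partial_{\xi_j} p^{(j)}_{s+1}(x,\xi) = p_s(x,\xi), \qquad \xi \neq 0.
\end{equation*}
Borel summation then produces $p^{(j)} \sim \sum_k p^{(j)}_{m+1-k}$ in $S^{m+1}(\R^n\times\R^n)$; the resulting \psidos\ $P^{(j)} \in \pdo^{m+1}(\R^n)$ satisfy $P - \sum_j [x_j, P^{(j)}] \in \psinf(\R^n)$, the discrepancies arising from cutoffs near $\xi = 0$ all being smoothing.

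For $s \neq -n$, Euler's identity provides the explicit solution $p^{(j)}_{s+1}(x,\xi) := \xi_j p_s(x,\xi) / [i(n+s)]$, since $i\sum_j \partial_{\xi_j}(\xi_j p_s) = i(n p_s + s p_s) = i(n+s)p_s$. The hypothesis on $m$ is designed precisely to handle the residual case $s = -n$: if $m \notin \Z_{\geq -n}$ then no $s = m-k$ ever equals $-n$ and nothing more is required; otherwise $m \in \Z_{\geq -n}$ and the assumption $c_P = 0$, combined with~(\ref{eq:NCR.formula-cP}), gives $\int_{S^{n-1}} p_{-n}(x,\xi)\, d^{n-1}\xi = 0$. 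For this final homogeneity I would switch to the kernel picture: by Lemma~\ref{lem:NCR.extension-symbolU} the \psido\ $p_{-n}(x,D)$ has Schwartz kernel of the form $K_0(x,x-y)$ modulo smooth with $K_0 \in \cK_0(\R^n\times\R^n)$, and the log-coefficient of $K_0$ is, up to a positive constant, the now-vanishing spherical integral. Hence $K_0$ is truly homogeneous of degree $0$, and Lemma~\ref{lem:Traces.criterion-logarithmic-kernel} yields $P^{(j)} \in \pdo^{-n+1}(\R^n)$ with $p_{-n}(x,D) = \sum_j [x_j, P^{(j)}]$ modulo smoothing.

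For the regular parity-class refinement, I would track parities through the construction. Assuming $P \in \pdozo(\R^n)$, Proposition~\ref{eq:Even-odd,vanishing-cP} gives $c_P = 0$, so the preceding construction applies. Writing the parity convention as $p_{m-k}(x,-\xi) = (-1)^{m-k+\varepsilon} p_{m-k}(x,\xi)$ (with $\varepsilon = 0$ in the odd-class case and $\varepsilon = 1$ in the even-class case), the formula $p^{(j)}_{s+1} = \xi_j p_s/[i(n+s)]$ shifts the parity exponent by $+1$, producing symbols of parity $(-1)^{s+1+\varepsilon}$, which is exactly what is required of a regular parity-class symbol of degree $s+1$. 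For the $s = -n$ step, a direct Fourier computation gives $K_0(x,-z) = (-1)^{-n+\varepsilon} K_0(x,z)$, and since the regular parity-class is precisely odd-class with $n$ odd or even-class with $n$ even, in both cases $K_0$ is antisymmetric and the second assertion of Lemma~\ref{lem:Traces.criterion-logarithmic-kernel} yields regular parity-class $P^{(j)}$'s. The only real obstacle in this proof is the division by zero in the algebraic formula at $s = -n$; it is exactly this obstruction that forces the detour through the kernel-level Lemma~\ref{lem:Traces.criterion-logarithmic-kernel}.
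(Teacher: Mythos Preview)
Your argument is correct and mirrors the paper's proof: the same Euler-identity construction $p^{(j)}_{s+1}=\xi_j p_s/[i(n+s)]$ for $s\neq -n$, the same appeal to Lemma~\ref{lem:Traces.criterion-logarithmic-kernel} via the kernel $K_0$ at $s=-n$, and the same parity bookkeeping for the regular parity-class refinement. One small point worth making explicit: since the homogeneous degree~$-n$ extension $\tau$ of $p_{-n}$ is not unique (it is determined only modulo a multiple of $\delta$), you must \emph{choose} it odd in $\xi$ before the Fourier computation can yield $K_0(x,-z)=-K_0(x,z)$; the paper handles this by invoking \cite[Lem.~1.3]{Po:NCR}.
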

\begin{proof}
Let us first assume that either $m$ is not an integer~$\geq -n$ or the symbol of degree $-n$ of $P$ is zero. Then we can put $P$ in the form~(\ref{eq:Traces.sum-commutators-cP0}) 
as follows. Let $p(x,\xi)\sim \sum_{j\geq 0}p_{m-j}(x,\xi)$ 
be the symbol of $P$. Then the Euler identity tells us that 
$\sum_{k=1}^{n}\xi_{k}\partial_{\xi_{k}}p_{m-j}=(m-j)p_{m-j}$, so we have 
\begin{equation}
     \sum_{k=1}^{n}\partial_{\xi_{k}}[\xi_{k}p_{m-j}]= n 
     p_{m-j}+\sum_{k=1}^{n}\xi_{k}\partial_{\xi_{k}}p_{m-j}=(m-j+n)p_{m-j}.
     \label{eq:TracesZ.Euler-formula}
\end{equation}

By assumption either $m$ is not an integer or $p_{-n}(x,\xi)=0$, so for $k=1,\ldots,n$ there always exists $P_{k} \in \pdo^{m+1}(\Rn)$ with 
symbol $p^{(k)}\sim \frac{1}{i}\sum_{j \geq 0}\frac{1}{m-j+n}\xi_{k}p_{m-j}$. Then thanks to~(\ref{eq:TracesZ.Euler-formula})
the operator $\sum_{k=1}^{n}[x_{k},P_{k}]$ has symbol
\begin{equation}
q=\sum_{k=1}^{n}\partial_{\xi_{k}}p^{(k)}
\sim   \sum_{j \geq 0} \frac{1}{m-j+n}\sum_{k=1}^{n}\partial_{\xi_{k}}[\xi_{k}p_{m-j}] \sim \sum_{j\geq 0} p_{m-j} \sim p.
%     \label{¥}
\end{equation}
This shows that $\sum_{k=1}^{n}[x_{k},P_{k}]$ has same symbol as $P$, so it agrees with $P$ up to a smoothing operator. Furthermore, if $P$ is 
of regular parity-class, then each operator $P_{k}$ is a \psido\ of regular parity-class too. 

It remains to deal with the case where $m$ is an integer~$\geq -n$ and $c_{P}(x)=0$. Let $p(x,\xi)\sim \sum_{j\geq 0}p_{m-j}(x,\xi)$ be the symbol of $P$. 
By~(\ref{eq:NCR.formula-cP}) for any $x\in \Rn$ we have 
\begin{equation}
    \int_{S^{n-1}}p_{-n}(x,\xi)d^{n-1}\xi=(2\pi)^{n}c_{P}(x)=0.
%     \label{}
\end{equation}
Therefore, by 
Lemma~\ref{lem:NCR.extension-symbolU}  we can extend $p_{-n}(x,\xi)$ into a distribution $\tau(x,\xi)$ in $C^{\infty}(U)\hotimes 
 \cS_{\reg}'(\Rn)$ such that $K_{0}(x,y):=\check{\tau}_{\xiy}(x,y)$ is in $\cK_{0}(\URn)$ and is homogeneous of degree $0$ with respect to $y$. 

 Let $Q\in \pdo^{-n}(\Rn)$ have Schwartz kernel $k_{Q}(x,y)=K_{0}(x,x-y)$. Then by Lemma~\ref{lem:Traces.criterion-logarithmic-kernel} 
 we can write $Q$ as a sum of commutators of the form~(\ref{eq:Traces.sum-commutators-K0}). Moreover $Q$ has symbol 
 $q(x,\xi)\sim (K_{0})^{\vee}_{\yxi}(x,\xi)\sim p_{-n}(x,\xi)$, so the operator $\tilde{P}=P-Q$ has symbol 
$\tilde{p}(x,\xi)\sim \sum_{m-j\neq-n}p_{m-j}(x,\xi)$. The first part of the proof then shows that $\tilde{P}$ can be put in the 
form~(\ref{eq:Traces.sum-commutators-cP0}). Incidentally, we see that $P$ can be put in that form.

Finally, let us further assume that $P$ is of regular parity-class. Then $\tilde{P}$ is of regular parity-class and, as its symbol of $\tilde{P}$ of degree 
$-n$ is zero, it follows from the first part of the proof that it can be put in the 
form~(\ref{eq:Traces.sum-commutators-cP0}) where the operators $P_{1},\ldots,P_{n}$ can be chosen to be of regular parity-class. 
The operator $Q$ is of regular parity-class too. 
In fact, as  
$p_{-n}(x,-\xi)=-p_{-n}(x,\xi)$ by~\cite[Lem.~1.3]{Po:NCR} we can choose $\tau(x,\xi)$ to be such that $\tau(x,-\xi)=-\tau(x,\xi)$. Then 
\begin{equation}
    K_{0}(x,-y)=[\tau(x,-\xi)]^{\vee}_{\xiy}=-\check{\tau}_{\xiy}(x,y)=-K_{0}(x,y).
%     \label{}
\end{equation}
 Therefore, by Lemma~\ref{lem:Traces.criterion-logarithmic-kernel} 
we can put $Q$ in the form~(\ref{eq:Traces.sum-commutators-K0}) 
with $P_{1},\ldots,P_{n}$ of regular parity-class. Since $P=\tilde{P}+Q$ it follows from all this that if $P$ is of regular parity-class, then the operators 
$P_{1},\ldots,P_{n}$ can be chosen to be of regular parity-class. 
\end{proof}

In the sequel we shall use the notation $\Psi_{c}$ to denote classes of \psidos\ with a compactly supported Schwartz kernel, e.g.,
$\pdoc^{\Z}(\Rn)$ is the space of integer order \psidos\ on $\Rn$ whose Schwartz kernels have compact supports.

\begin{proposition}\label{prop:Traces.sum-commutators-M1}
    Let $P\in \Psi^{m}(M,\cE)$, $m\in \C$, and assume that either $m$ is not an integer~$\geq -n$ or we have $c_{P}(x)=0$. Then $P$ can be 
    written in the form
\begin{equation}
    P=  [ a_{1}, P_{1}]+\ldots+ [a_{N}, P_{N}] +R,
     \label{eq:Traces.sum-commutators-cP0-M}
\end{equation}    
   where $a_{1},\ldots,a_{N}$ are smooth functions on $M$, the operators $P_{1}, \ldots, P_{N}$ are in $\Psi^{m+1}(M,\cE)$ 
   and $R$ is a smoothing operator. Furthermore, if $P$ is in $\pdozo(M,\cE)$ then $P_{1},\ldots, P_{N}$ can be chosen to be in $\pdozo(M,\cE)$ as 
   well. 
\end{proposition}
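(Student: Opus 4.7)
The strategy is to localize $P$ via a partition of unity subordinate to a finite cover of $M$ by trivializing charts, apply Lemma~\ref{lem:Traces.sum-commutators-Rn1} inside each chart, and then glue the resulting local commutator identities into a global one on $M$ by means of nested cutoff functions.

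In detail, cover $M$ by trivializing charts $U_1,\ldots,U_N$ with coordinates $(x_1,\ldots,x_n)$ on each $U_j$. Choose a partition of unity $\{\varphi_j\}$ subordinate to $\{U_j\}$ and, for each $j$, cutoffs $\psi_j,\tilde\chi_j,\bar\chi_j\in C^\infty_c(U_j)$ with $\psi_j\equiv 1$ on a neighborhood of $\supp\varphi_j$, $\tilde\chi_j\equiv 1$ on a neighborhood of $\supp\varphi_j\cup\supp\psi_j$, and $\bar\chi_j\equiv 1$ on a neighborhood of $\supp\tilde\chi_j$. Decomposing $P=\sum_j\varphi_j P\psi_j+\sum_j\varphi_j P(1-\psi_j)$, the second sum is smoothing since the product $\varphi_j(x)(1-\psi_j(y))$ vanishes in a neighborhood of the diagonal while $k_P$ is smooth off the diagonal. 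Set $P_j:=\varphi_j P\psi_j$; this is a \psido\ with Schwartz kernel compactly supported in $U_j\times U_j$, which the chart identifies with a \psido\ of order $m$ on $\R^n$, and since multiplication by smooth functions preserves the parity class, $P_j$ is of regular parity-class whenever $P$ is.

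Using the expansion~(\ref{eq:Log.behavior-kP}) together with the Taylor expansion $\psi_j(y)=\psi_j(x)+O(|y-x|)=1+O(|y-x|)$ for $x\in\supp\varphi_j$, one checks that $c_{P_j}(x)=\varphi_j(x)c_P(x)$, so the hypothesis on $P$ transfers to each $P_j$. Lemma~\ref{lem:Traces.sum-commutators-Rn1} then yields, in the chart $U_j$,
\begin{equation*}
    P_j=\sum_{k=1}^n[x_k,Q_{j,k}]+R_j,
\end{equation*}
with $Q_{j,k}\in\Psi^{m+1}(\R^n)$, $R_j\in\psinf(\R^n)$, and $Q_{j,k}\in\pdozo(\R^n)$ whenever $P\in\pdozo(M,\cE)$.

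The remaining—and main—difficulty is that $x_k$ and $Q_{j,k}$ are only defined inside $U_j$, so the identity above must be promoted to a commutator identity on $M$. Define $\tilde Q_{j,k}:=\tilde\chi_j Q_{j,k}\tilde\chi_j$, extended by zero to $M$; this is a \psido\ on $M$ of order $m+1$ (of regular parity-class when applicable) with kernel compactly supported in $U_j\times U_j$. Define also $a_{j,k}(x):=\bar\chi_j(x)x_k$, a smooth function on $M$. The key observation is that, since $\bar\chi_j\equiv 1$ on $\supp\tilde\chi_j$, on the kernel support of $\tilde Q_{j,k}$ one has $a_{j,k}(x)-a_{j,k}(y)=x_k-y_k$; hence the Schwartz kernel of $[a_{j,k},\tilde Q_{j,k}]$ equals $(x_k-y_k)\tilde\chi_j(x)k_{Q_{j,k}}(x,y)\tilde\chi_j(y)$. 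Summing over $k$ and using that $k_{P_j}$ is supported where $\tilde\chi_j(x)\tilde\chi_j(y)=1$, one obtains $\sum_k[a_{j,k},\tilde Q_{j,k}]=P_j-\tilde\chi_j R_j\tilde\chi_j$, with $\tilde\chi_j R_j\tilde\chi_j$ smoothing. Summing over $j$ and relabeling then yields the desired presentation of $P$. The nested pair $\tilde\chi_j\subset\bar\chi_j$ is essential here: a single cutoff would leave, in place of a genuine commutator, a residual term of \psido\ order $m$ coming from cross-terms such as $\chi_j[x_k,Q_{j,k}]\chi_j-[\chi_j x_k,\chi_j Q_{j,k}\chi_j]$, which would spoil the induction.
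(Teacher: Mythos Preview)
Your proof is correct and follows essentially the same approach as the paper: localize $P$ via a partition of unity into pieces $\varphi_j P\psi_j$ supported in trivializing charts, verify that $c_{\varphi_jP\psi_j}=\varphi_j c_P$, apply Lemma~\ref{lem:Traces.sum-commutators-Rn1} in each chart, and then use a pair of nested cutoffs (your $\tilde\chi_j\subset\bar\chi_j$, the paper's $\psi\subset\chi$) to convert the local commutators $[x_k,Q_{j,k}]$ into global ones $[\bar\chi_j x_k,\tilde\chi_j Q_{j,k}\tilde\chi_j]$. The only cosmetic difference is that you carry out the cutoff argument at the level of Schwartz kernels, whereas the paper uses the operator identity $\psi[x_j,P_j]\psi=[\chi x_j,\psi P_j\psi]$ directly; these are equivalent, and your closing remark about the necessity of the nested pair is exactly the point.
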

\begin{proof}
    Let us first assume that $P$ is a scalar \psido\ on $\R^{n}$ such that its Schwartz kernel has compact support. By Lemma~\ref{lem:Traces.sum-commutators-Rn1} 
    there exist 
    $P_{1},\ldots,P_{n}$ in $\Psi^{m+1}(\R^{n})$ and $R\in \psinf(\R^{n})$ such that 
    \begin{equation}
    P=  [ x_{1}, P_{1}]+\ldots+ [x_{n}, P_{n}] +R.
\end{equation}    

Let $\chi$ and $\psi$ in $C^{\infty}_{c}(\Rn)$ be such that $\psi(x)\psi(y)=1$ near the support of the kernel of $P$, so that we have $\psi P 
\psi=P$, and let $\chi \in C^{\infty}_{c}(\Rn)$ be such that $\chi=1$ near $\supp \psi$. As 
$\psi [x_{j},P_{j}]\psi= x_{j} \chi \psi P_{j}\psi- \psi P\psi x_{j}\chi=[x_{j} \chi ,\psi P_{j}\psi]$ we get
\begin{equation}
    P=\psi P \psi= \sum_{j=1}^{n} \psi [x_{j}, P_{j}]\psi +\psi R \psi= \sum_{j=1}^{n}[\chi x_{j}, \psi 
    P_{j}\psi] + \psi R \psi.  
     \label{eq:Traces.commutators.vanishing-cP}
\end{equation}
This shows that $P$  can be put in the form~(\ref{eq:Traces.sum-commutators-cP0-M}) 
with functions $a_{1},\ldots,a_{n}$ of compact supports and operators $P_{1},\ldots,P_{n}$ and $R$ with 
compactly supported Schwartz kernels. 
In addition, if $P$ is of regular parity-class, then Lemma~\ref{lem:Traces.sum-commutators-Rn1}  insures us that $P_{1},\ldots,P_{n}$ can be 
chosen to be of regular parity-class, so that the operators $\psi P_{1} \psi,\ldots, \psi P_{n}\psi$ are of regular parity-class. 
Furthermore, these results immediately extends to \psidos\ in $\pdoc^{*}(U,\cE)$ where $U\subset M$ is local open chart over which $\cE$ is trivializable. 

% whose support is a compact subset of an open chart $U\subset M$ which is diffeomorphic to $\Rn$ and is 
% an open
% kernel is compactly supported inside and contained in an open subset of the $U\times U$, where $U$ is the domain of a local trivializing chart mapping onto 
% $\R^{n}$. 

Next, let $P\in \Psi^{m}(M,\cE)$ and assume that either $m$ is not an integer~$\geq -n$ or we have $c_{P}(x)=0$. Let
$(\varphi_{i})\subset C^{\infty}(M)$ be a finite partition of unity subordinated to an open covering $(U_{i})$ 
by trivializing local charts. For each index $i$ let $\psi_{i}\in C^{\infty}_{c}(U_{i})$ be such that 
$\psi_{i}=1$ near $\supp \varphi_{i}$. Then there exists $R\in \psinf(M,\cE)$ such that
\begin{equation}
    P= \sum \varphi_{i}P\psi_{i} +R.
     \label{eq:Traces-sum-phiPpsi}
\end{equation}

Each operator $P_{i}:=\varphi_{i}P\psi_{i}$ is contained in $\pdoc^{m}(U_{i},\cE)$. Moreover, if $m$ is not an integer~$\geq -n$ then, 
as $P_{i}$ and $\varphi_{i}P$ agrees up to a smoothing operator, 
we have $c_{P_{i}}(x)=c_{\varphi_{i}P}(x)=\varphi_{i}c_{P}(x)=0$. In any case, the first part of the proof insures us that each operator 
$P_{i}$ can be put in the form~(\ref{eq:Traces.sum-commutators-cP0-M}). Thanks to~(\ref{eq:Traces-sum-phiPpsi}) 
it then follows that $P$ can be put in such a form as well. Furthermore, if $P$ is of regular parity-class then the operators $P_{1},\ldots,P_{m}$ can 
be chosen to be of regular parity-class.
\end{proof}

Throughout the paper we will make use of the following.
\begin{lemma}[\cite{Wo:PhD}, \cite{Gu:RTCAFIO}]\label{lem:TracesCZ.smoothing-operators}
1) Any $R \in \psinf(M,\cE)$ such that $\Tra (R)=0$ is the sum of two commutators in $\psinf(M,\cE)$.\smallskip 

2) Any trace on $\psinf(M,\cE)$ is a constant multiple of the usual trace. 
\end{lemma}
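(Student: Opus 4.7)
The strategy is to derive (2) as a direct consequence of (1), so the crux is part (1).

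For part (1), I would first reduce the statement to a statement about infinite matrices. Fix a positive self-adjoint elliptic operator $\Delta$ acting on sections of $\cE$ (say, the Laplacian of some Riemannian metric and Hermitian structure on $\cE$), and let $(e_n)_{n\geq 0}$ be an $L^2$-orthonormal basis of smooth eigensections of $\Delta$ with eigenvalues $\lambda_n$ growing polynomially. Using standard Sobolev embeddings together with the fact that smoothing operators map any Sobolev space continuously into any other, one identifies $\psinf(M,\cE)$ with the nuclear Fréchet algebra of infinite matrices $R=(R_{ij})=(\langle Re_j,e_i\rangle)$ whose entries decay faster than any polynomial in $(i,j)$; under this identification $\Tra R=\sum_i R_{ii}$ and operator composition corresponds to matrix multiplication. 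The rank-one operators $|e_i\rangle\langle e_j|$ lie in $\psinf(M,\cE)$, as their kernels $e_i(x)\otimes e_j(y)^{*}$ are smooth.

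Given $R$ as above with $\sum R_{ii}=0$, decompose $R=D+F$, where $D$ is the diagonal part and $F$ the off-diagonal part. For $D=\operatorname{diag}(R_{00},R_{11},\ldots)$, the partial sums $s_n:=-\sum_{j>n}R_{jj}$ form a rapidly decreasing sequence thanks to the vanishing of the trace. Choosing factorizations $s_n=\alpha_n\beta_n$ with both $(\alpha_n)$ and $(\beta_n)$ rapidly decreasing (for instance, with $|\alpha_n|=|\beta_n|=|s_n|^{1/2}$ and an appropriate phase), the ``weighted shift'' matrices $A_1$ with only the nonzero entries $(A_1)_{n,n+1}=\alpha_n$ and $B_1$ with only the nonzero entries $(B_1)_{n+1,n}=\beta_n$ are Schwartz matrices, and a direct computation yields $[A_1,B_1]=D$.

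The remaining off-diagonal part $F$, with $F_{ii}=0$ and rapidly decreasing entries, must then be exhibited as a single commutator $[A_2,B_2]$ with $A_2,B_2\in\psinf(M,\cE)$. This is the main obstacle, because the natural ``unbounded'' operator one would like to commute against (a number or shift operator) is not smoothing. A workable strategy is to decompose $F$ according to distance from the diagonal, $F=\sum_{k\neq 0}F^{(k)}$ with $F^{(k)}$ supported on the $k$th diagonal, and arrange the $F^{(k)}$ into a telescoping commutator modelled on the diagonal case; an alternative is to use the nuclearity of $\psinf(M,\cE)$ to write $F$ as an absolutely convergent sum of rank-one pieces and bundle them via the identity $|f\rangle\langle g|=[\,|f\rangle\langle h|,\,|h\rangle\langle g|\,]$ (valid whenever $\langle g,f\rangle=0$ and $\|h\|=1$), using the trace-zero condition $\sum\langle e_j,f_j\rangle=0$ and rapid decay to make a single pair of smoothing operators. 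Either way, one obtains $A_2,B_2\in\psinf(M,\cE)$ with $F=[A_2,B_2]$, and hence $R=[A_1,B_1]+[A_2,B_2]$.

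For part (2), let $\tau$ be a trace on $\psinf(M,\cE)$ and set $P_0=|e_0\rangle\langle e_0|$, which lies in $\psinf(M,\cE)$ and satisfies $\Tra P_0=1$. Put $c:=\tau(P_0)$. For any $R\in\psinf(M,\cE)$, the operator $R-(\Tra R)P_0$ has vanishing trace, hence by part (1) is a sum of two commutators in $\psinf(M,\cE)$ on which $\tau$ must vanish. Therefore $\tau(R)=(\Tra R)\,\tau(P_0)=c\,\Tra R$, so $\tau$ is the constant multiple $c$ of the usual trace. The principal difficulty throughout is the off-diagonal construction in part (1); the rest is routine Sobolev theory and direct algebra.
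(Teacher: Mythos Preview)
The paper does not actually prove this lemma; it merely quotes it with references to \cite{Wo:PhD} and \cite{Gu:RTCAFIO}. So there is no in-paper argument to compare against, and the question is simply whether your outline is correct.

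Your reduction of $\psinf(M,\cE)$ to the algebra of rapidly decreasing infinite matrices is standard and correct, as is the derivation of (2) from (1). Your treatment of the diagonal part $D$ is also fine: with $s_n=-\sum_{j>n}R_{jj}$ rapidly decreasing and $\alpha_n\beta_n=s_n$, $|\alpha_n|=|\beta_n|=|s_n|^{1/2}$, both weighted shift matrices $A_1,B_1$ lie in $\mathcal{s}$ and one checks directly that $[A_1,B_1]$ is diagonal with entries $s_n-s_{n-1}=R_{nn}$ (and $s_0=R_{00}$).

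The genuine gap is the off-diagonal part. You correctly flag it as ``the main obstacle'' but neither of your two sketches, as written, actually closes it. For the rank-one route, the identity $|f\rangle\langle g|=[\,|f\rangle\langle h|,\,|h\rangle\langle g|\,]$ is fine for a single term, but if you try to sum $A_2=\sum_j|f_j\rangle\langle h_j|$ and $B_2=\sum_j|h_j\rangle\langle g_j|$, then $A_2B_2=\sum_{j,k}\langle h_j,h_k\rangle\,|f_j\rangle\langle g_k|$ and $B_2A_2=\sum_{j,k}\langle g_j,f_k\rangle\,|h_j\rangle\langle h_k|$; getting $[A_2,B_2]=F$ forces orthogonality conditions (on the $h_j$) together with the vanishing of all cross pairings $\langle g_j,f_k\rangle$, and you have not indicated how to arrange this while keeping $A_2,B_2$ rapidly decreasing. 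The ``telescoping on the $k$th diagonals'' idea is even less specified: handling each $F^{(k)}$ separately gives infinitely many commutators, not one, and the diagonal trick does not transpose directly because the natural ``integrating'' operator (a shift or the number operator) is not in $\mathcal{s}$. Note also that the obvious attempt via a diagonal $A$ with $[A,B]_{ij}=(a_i-a_j)B_{ij}$ fails: if $(a_n)$ is rapidly decreasing then $|a_i-a_j|$ is far too small to keep $B_{ij}=F_{ij}/(a_i-a_j)$ in $\mathcal{s}$. So this step needs a real idea, not just a gesture; as it stands the proposal does not establish the ``two commutators'' claim.
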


We are now ready to prove Theorem~\ref{thm:NCRCT.unicity-TR}.  
\begin{proof}[Proof of Theorem~\ref{thm:NCRCT.unicity-TR}]
    Let $\tau:\pdocz(M,\cE)\rightarrow \C$ be a linear map vanishing on $[C^{\infty}(M), \pdocz(M,\cE)]$ and $[\psinf(M,\cE),\psinf(M,\cE)]$. Then 
    $\tau$ induces a trace on $\psinf(M,\cE)$, so by Lemma~\ref{lem:TracesCZ.smoothing-operators} 
    there exists a constant $c\in \C$ such that, for any $R\in 
    \psinf(M,\cE)$, we have $\tau(R)=c\Tra(R)$. 
    
    Let $P\in \pdocz(M,\cE)$. Then by Proposition~\ref{prop:Traces.sum-commutators-M1} there exists $R\in \psinf(M,\cE)$ such that
    $P= R \ \bmod [C^{\infty}(M), \pdocz(M,\cE)]$.
  Since $\tau$ vanishes on  $[C^{\infty}(M), \pdocz(M,\cE)]$ we have $\tau(P)=\tau(R)=c\Tra(R)$. Similarly, we have 
   $\TR P=\TR R=\Tra (R)$, so we see that $\tau(P)=c\TR P$. Thus $\tau$ is a constant multiple of $\TR$. 
   
   Finally , along similar lines we can prove that any linear form on $\pdozo(M,\cE)$ vanishing on $[C^{\infty}(M), \pdozo(M,\cE)]$ and
   $[\psinf(M,\cE),\psinf(M,\cE)]$ is a constant multiple of the canonical trace. The 
   proof of Theorem~\ref{thm:NCRCT.unicity-TR} is thus achieved.
\end{proof}

We close this section with the following.
% consequence of Proposition~\ref{prop:Traces.sum-commutators-M1} and Lemma~\ref{lem:TracesCZ.smoothing-operators}.

\begin{proposition}
1) For any $P\in \pdocz(M,\cE)$ we have $\TR P=0$ if and only if $P$ is contained in 
$[C^{\infty}(M),\pdocz(M,\cE)]+[\psinf(M,\cE),\psinf(M,\cE)]$.\smallskip
    
2) For any $P\in \pdozo(M,\cE)$ we have $\TR P=0$ if and only if $P$ is contained in 
$[C^{\infty}(M),\pdozo(M,\cE)]+[\psinf(M,\cE),\psinf(M,\cE)]$.
\end{proposition}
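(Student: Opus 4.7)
The plan is to establish the two directions separately, using the trace properties from Proposition~\ref{thm:NCR.TR.global1}, the commutator decomposition from Proposition~\ref{prop:Traces.sum-commutators-M1}, and the smoothing-operator lemma (Lemma~\ref{lem:TracesCZ.smoothing-operators}(1)). No new technical ingredient is needed; the statement is essentially a repackaging of the material of this section.

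For the ``if'' direction of part 1, suppose $P$ is a finite sum of brackets $[f, Q]$ with $f \in C^{\infty}(M)$ and $Q \in \pdocz(M,\cE)$, plus an element of $[\psinf(M,\cE),\psinf(M,\cE)]$. Since $\ord f + \ord Q = \ord Q \notin \Z$, Proposition~\ref{thm:NCR.TR.global1}(1) gives $\TR [f,Q] = 0$. The restriction of $\TR$ to $\psinf(M,\cE)$ coincides with the usual trace $\Tra$, which vanishes on $[\psinf(M,\cE),\psinf(M,\cE)]$. Hence $\TR P = 0$. For the ``if'' direction of part 2, every $f \in C^{\infty}(M)$ is a differential operator of order $0$, hence odd-class, so each bracket $[f,Q]$ with $Q \in \pdozo(M,\cE)$ lies in $[\pdozodd(M,\cE),\pdozo(M,\cE)]$, on which $\TR$ vanishes by Proposition~\ref{thm:NCR.TR.global1}(2); the $[\psinf,\psinf]$ part is handled as before.

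For the ``only if'' direction, assume $\TR P = 0$. In part 1 the order $m \notin \Z$, so in particular $m$ is not an integer $\geq -n$; in part 2, Proposition~\ref{eq:Even-odd,vanishing-cP} yields $c_P(x) \equiv 0$. In either case the hypotheses of Proposition~\ref{prop:Traces.sum-commutators-M1} are met, furnishing functions $a_1,\ldots,a_N \in C^{\infty}(M)$, operators $P_1,\ldots,P_N$ in the same class as $P$ (i.e.\ in $\pdocz(M,\cE)$ in case 1 and in $\pdozo(M,\cE)$ in case 2), and a smoothing operator $R$ with
\begin{equation}
P = [a_1,P_1] + \cdots + [a_N,P_N] + R.
\end{equation}
Applying $\TR$ and using the already-proved ``if'' direction, $0 = \TR P = \TR R = \Tra R$. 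Lemma~\ref{lem:TracesCZ.smoothing-operators}(1) then expresses $R$ as a sum of two commutators in $\psinf(M,\cE)$, which combined with the display above puts $P$ in $[C^{\infty}(M),\pdocz(M,\cE)] + [\psinf(M,\cE),\psinf(M,\cE)]$ (respectively with $\pdozo(M,\cE)$).

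No substantial obstacle is anticipated: the only delicate point is matching the class of the operators $P_i$ produced by Proposition~\ref{prop:Traces.sum-commutators-M1} with the commutator space named in the statement, and this is precisely the content of the ``furthermore'' clause in that proposition together with the vanishing $c_P \equiv 0$ in the regular parity-class case.
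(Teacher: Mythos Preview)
Your proposal is correct and follows essentially the same approach as the paper: both rely on Proposition~\ref{prop:Traces.sum-commutators-M1} to reduce $P$ to a smoothing remainder $R$ modulo the appropriate commutators, then invoke Lemma~\ref{lem:TracesCZ.smoothing-operators}(1) to handle $R$ once $\Tra R=0$. The paper packages this slightly differently---fixing a smoothing operator $R_{0}$ with $\Tra R_{0}\neq 0$ and deriving the single formula $P=\frac{\TR P}{\Tra R_{0}}R_{0}$ modulo commutators, from which both directions are read off---but the logical content is identical to yours.
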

\begin{proof}
We will only prove the first part, since the second part can be proved along similar lines. 
Let $R_{0}\in \psinf(M,\cE)$ be such that $\Tra R_{0}\neq 0$ (e.g.~$R_{0}=e^{-\Delta_{\cE}}$ where 
$\Delta_{\cE}$ is a Laplace type operator acting on the sections of 
$\cE$). Since by Lemma~\ref{lem:TracesCZ.smoothing-operators} 
the commutator space of $\psinf(M,\cE)$ agrees with the null space of $\Tra_{|\psinf(M,\cE)}$, for any $R\in \psinf(M,\cE)$ we have 
\begin{equation}
    R=\frac{\Tra R}{\Tra R_{0}}R_{0} \quad \bmod [\psinf(M,\cE),\psinf(M,\cE)].
     \label{eq:TracesCZ.smoothing-commutators}
\end{equation}

Let $P\in \pdocz(M,\cE)$. By Proposition~\ref{prop:Traces.sum-commutators-M1} 
there exists $R\in \psinf(M,\cE)$ such that $P=R \ \bmod [C^{\infty}(M),\pdocz(M,\cE)]$. 
Then we have $\TR P=\TR R=\Tra R$, so using~(\ref{eq:TracesCZ.smoothing-commutators}) we get
\begin{equation}
    P=\frac{\TR P}{\Tra R_{0}}R_{0} \ \bmod [C^{\infty}(M),\pdocz(M,\cE)] +[\psinf(M,\cE),\psinf(M,\cE)].
%     \label{¥}
\end{equation}
It then follows that $P$ belongs to $[C^{\infty}(M),\pdocz(M,\cE)]+[\psinf(M,\cE),\psinf(M,\cE)]$ if and only if $\TR P$ vanishes. Hence the result.
\end{proof}

\section{Uniqueness of the noncommutative residue}
\label{sec:TracesZ}
In this section we shall prove Theorem~\ref{thm:NCRCT.unicity-NCR} concerning the uniqueness of the noncommutative residue. First, we have: 

\begin{lemma}\label{lem:TracesZ.smoothing-operators}
Any operator $R\in \psinf(\Rn)$ can be written in the form
\begin{equation}
    R=  [ x_{1}, P_{1}]+\ldots+ [x_{n}, P_{n}], %\qquad P_{j}\in \Psi^{-n+1}(\Rn). 
     \label{eq:Traces.sum-commutators-R}
\end{equation}    
with $P_{1}, \ldots, P_{n}$ in $\Psi^{-n+1}(\Rn)$.
\end{lemma}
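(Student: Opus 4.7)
The plan is to work directly with Schwartz kernels, reverse-engineering the $P_j$ from a decomposition of the kernel of $R$ via Proposition~\ref{prop:LS.characterisation-kernel}. Write $k_R(x,y)$ for the (smooth) Schwartz kernel of $R$ and set $a(x):=k_R(x,x)$. Applying Hadamard's lemma to the smooth function $F(x,z):=k_R(x,x-z)$, which takes the value $a(x)$ at $z=0$, yields smooth functions $H_j\in C^\infty(\Rn\times\Rn)$ with
\begin{equation*}
 k_R(x,y)=a(x)+\sum_{j=1}^n (x_j-y_j)\, H_j(x,x-y).
\end{equation*}
This reduces the problem to realising the ``kernel'' $a(x)$ (viewed as a distribution constant in $y$) as a sum $\sum_j (x_j-y_j)\, k_j(x,y)$ with each $k_j$ the kernel of a $\pdo^{-n+1}$ operator.

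For the diagonal part, I would set $K_j(x,z) := a(x)\, z_j/|z|^2$. For $n\geq 2$ the factor $z_j/|z|^2$ is locally integrable and polynomially bounded, hence defines a tempered distribution in $\cS'_{\reg}(\Rn)$ homogeneous of degree $-1$; for $n=1$ one takes $\op{pv}(1/z)$, whose existence as a homogeneous tempered extension is granted by Lemma~\ref{lem:Homogenenous-extension} because the obstruction coefficient $c_0$ of $1/z$ over $S^0=\{\pm 1\}$ equals $(-1)+1=0$. Thus $K_j\in\cK_{-1}(\Rn\times\Rn)\subset \cK^{-1}(\Rn\times\Rn)$, and by Proposition~\ref{prop:LS.characterisation-kernel} the operator $P_j$ with Schwartz kernel
\begin{equation*}
 k_{P_j}(x,y) := K_j(x,x-y)+H_j(x,x-y)
\end{equation*}
belongs to $\pdo^{-n+1}(\Rn)$; the smooth term $H_j(x,x-y)$ is absorbed into the smoothing remainder of the kernel decomposition.

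To close the argument, the Schwartz kernel of $[x_j,P_j]$ is $(x_j-y_j)\, k_{P_j}(x,y)$, so summing gives a kernel equal to
\begin{equation*}
 a(x)\sum_{j=1}^n \frac{(x_j-y_j)^2}{|x-y|^2}+\sum_{j=1}^n(x_j-y_j)H_j(x,x-y)=a(x)+\sum_{j=1}^n(x_j-y_j)H_j(x,x-y)=k_R(x,y),
\end{equation*}
where the first sum collapses to $a(x)$ because the bounded function $\sum_j z_j^2/|z|^2$ equals $1$ off the origin and hence represents the constant distribution $1$ on $\Rn$ (for $n=1$, this is just the identity $z\cdot\op{pv}(1/z)=1$). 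The main subtlety, and the only place where dimension matters, is the $n=1$ verification that $1/z$ admits a genuinely homogeneous tempered extension, which hinges on the cancellation $\int_{S^0}z/|z|^2=0$ from Lemma~\ref{lem:Homogenenous-extension}; everything else is a transparent manipulation of smooth kernels.
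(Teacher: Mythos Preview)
Your argument is correct and follows essentially the same route as the paper's proof: split $k_R(x,y)$ via Hadamard's lemma into the diagonal value $a(x)=k_R(x,x)$ plus a remainder already of the form $\sum_j(x_j-y_j)\,(\text{smooth})$, then realise the constant-in-$y$ piece $a(x)$ using the kernels $a(x)\,z_j/|z|^2$, which lie in $\cK_{-1}$ and hence give operators in $\Psi^{-n+1}$ by Proposition~\ref{prop:LS.characterisation-kernel}. The paper separates these two contributions (first writing $R=Q+\sum_j[x_j,R_j]$ with $R_j$ smoothing, then invoking Lemma~\ref{lem:Traces.criterion-logarithmic-kernel} for $Q$), whereas you fold the smooth Hadamard pieces $H_j$ directly into the $P_j$; this is only a cosmetic difference. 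Your explicit treatment of the $n=1$ case via $\op{pv}(1/z)$ and the vanishing obstruction $c_0$ is a welcome clarification that the paper leaves implicit.
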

\begin{proof}
 Let $k_{R}(x,y)$ be the Schwartz kernel of $R$. Since $k_{R}(x,y)$ is smooth we have 
 \begin{equation}
     k_{R}(x,y)=k_{R}(x,x)+(x_{1}-y_{1})k_{R_{1}}(x,y)+\ldots +(x_{n}-y_{n})k_{R_{n}}(x,y),
      \label{eq:Traces.Taylor}
 \end{equation}
for some smooth functions $k_{R_{1}}(x,y),\ldots,k_{R_{n}}(x,y)$. For $j=1,\ldots,n$ let $R_{j}$ be the smoothing operator with kernel $k_{R_{j}}(x,y)$ and 
let $Q$ be the operator with kernel $k_{Q}(x,y)=k_{R}(x,x)$. Then by~(\ref{eq:Traces.Taylor}) 
we have  $R=Q+\sum_{j=1}^{n}[x_{j},R_{j}]$. 

To complete the proof it remains to show that $Q$ can be written as a sum of commutators of the form~(\ref{eq:Traces.sum-commutators-R}). 
Observe that the Schwartz kernel of $Q$ can be written as $k_{Q}(x,y)=K_{0}(x,x-y)$ with $K_{0}(x,y)=k_{R}(x,x)$. Obviously 
$K_{0}(x,y)$ belongs to  $\cK_{0}(\Rn\times \Rn)$ 
and is homogeneous of degree $0$ with respect to $y$, so it follows from Lemma~\ref{lem:Traces.criterion-logarithmic-kernel} 
that $Q$ can be written as a sum of commutators of the form~(\ref{eq:Traces.sum-commutators-R}).  The proof is thus achieved. 
\end{proof}

Using the previous lemma we shall prove:
\begin{proposition}\label{prop:TracesZ.commutators-Psinf}
    Any $R\in \Psi^{-\infty}(M,\cE)$ can be written in the form 
    \begin{equation}
        P=[a_{1},P_{1}]+\ldots+ [a_{N},P_{N}]+[R_{1},R_{2}]+[R_{3},R_{4}],
        \label{eq:TracesZ.commutators-cP0}
    \end{equation}
   where the functions $a_{j}$ are in $C^{\infty}(M)$, the operators $P_{j}$ are in $\Psi^{-n+1}(M,\cE)$ and the operators $R_{j}$ are 
   in $\psinf(M,\cE)$. 
\end{proposition}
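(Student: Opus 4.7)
The plan is to split the proof into two parts: a localization step that reduces to operators with kernel supported in a single trivializing chart, and a second step in which Lemma~\ref{lem:TracesZ.smoothing-operators} is applied chart-by-chart and the resulting $\Rn$-commutators are pushed back to commutators on $M$. The ``off-diagonal'' error coming from the localization will be absorbed into the two $\psinf(M,\cE)$-commutators via Lemma~\ref{lem:TracesCZ.smoothing-operators}.

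Concretely, let $(\varphi_i)$ be a finite partition of unity subordinate to an open covering $(U_i)$ of $M$ by trivializing charts, and choose $\psi_i\in C^\infty_c(U_i)$ with $\psi_i\equiv 1$ near $\supp\varphi_i$. I decompose
\begin{equation*}
R \;=\; \sum_i \varphi_i R \,\psi_i \;+\; R_0, \qquad R_0 \;:=\; \sum_i \varphi_i R\,(1-\psi_i).
\end{equation*}
Since $\varphi_i(x)(1-\psi_i(x))\equiv 0$, the Schwartz kernel of $R_0$ vanishes on the diagonal and $\Tra R_0=0$; Lemma~\ref{lem:TracesCZ.smoothing-operators} then writes $R_0=[R_1,R_2]+[R_3,R_4]$ with $R_j\in\psinf(M,\cE)$. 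Each remaining summand $\varphi_i R\psi_i$ has kernel supported in $U_i\times U_i$, so in the trivialization over $U_i$ it is a compactly supported $\End\C^r$-valued smoothing operator on $\Rn$. Lemma~\ref{lem:TracesZ.smoothing-operators} (whose proof extends verbatim to the matrix-valued setting) then yields
\begin{equation*}
\varphi_i R \,\psi_i \;=\; \sum_{j=1}^n [x_j,Q_{ij}],\qquad Q_{ij}\in \Psi^{-n+1}(\Rn)\otimes \End\C^r.
\end{equation*}

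To turn these $\Rn$-commutators into commutators on $M$ I apply the cutoff trick used in the proof of Proposition~\ref{prop:Traces.sum-commutators-M1}: pick $\tilde\varphi_i,\tilde\psi_i,\tilde\chi_i\in C^\infty_c(U_i)$ with $\tilde\varphi_i\equiv 1$ near $\supp\varphi_i$, $\tilde\psi_i\equiv 1$ near $\supp\psi_i$, and $\tilde\chi_i\equiv 1$ near $\supp\tilde\varphi_i\cup \supp\tilde\psi_i$. Using $\tilde\chi_i\tilde\varphi_i=\tilde\varphi_i$, $\tilde\psi_i\tilde\chi_i=\tilde\psi_i$, and that the function $x_j$ commutes with each cutoff, one verifies
\begin{equation*}
\varphi_i R \,\psi_i \;=\; \tilde\varphi_i(\varphi_i R \psi_i)\tilde\psi_i \;=\; \sum_{j=1}^n [x_j\tilde\chi_i,\;\tilde\varphi_i Q_{ij}\tilde\psi_i].
\end{equation*}
The function $a_{ij}:=x_j\tilde\chi_i$ extends by zero to $C^\infty(M)$ and $P_{ij}:=\tilde\varphi_i Q_{ij}\tilde\psi_i$ extends by zero to an element of $\Psi^{-n+1}(M,\cE)$; summing over $i$ and adding $R_0=[R_1,R_2]+[R_3,R_4]$ gives the desired presentation~\eqref{eq:TracesZ.commutators-cP0}. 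The only substantive point---and the reason we need \emph{two} additional commutators in $\psinf(M,\cE)$ rather than none---is the treatment of the off-diagonal localization error $R_0$; everything else is just cutoff bookkeeping.
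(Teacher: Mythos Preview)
Your proof is correct and follows essentially the same approach as the paper: localize via a partition of unity, apply Lemma~\ref{lem:TracesZ.smoothing-operators} chart-by-chart, convert the resulting $\Rn$-commutators into compactly supported commutators on $M$ by the cutoff trick, and absorb the off-diagonal remainder $R_0$ (which has vanishing trace) using Lemma~\ref{lem:TracesCZ.smoothing-operators}. The only difference is cosmetic ordering---the paper first establishes the compactly supported local form and then does the partition of unity, whereas you do the partition of unity first---but the content is identical.
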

\begin{proof}
    If $R\in \psinfc(\Rn)$ then Lemma~\ref{lem:TracesZ.smoothing-operators} 
    tells us that $R=\sum_{j=1}^{n}[x_{j},P_{j}]$ with $P_{1},\ldots,P_{n}$ in $\Psi^{-n+1}(\Rn)$. Let 
    $\psi\in C^{\infty}_{c}(\Rn)$ be such that $\psi(x)\psi(y)=1$ near the support of the kernel of $R$ 
    and let $\chi \in C^{\infty}_{c}(\Rn)$ be such that $\chi=1$ near $\supp \psi$. As in~(\ref{eq:Traces.commutators.vanishing-cP}) we have 
$\psi [x_{j},P_{j}]\psi= x_{j} \chi \psi P_{j}\psi- \psi P\psi x_{j}\chi=[x_{j} \chi ,\psi P_{j}\psi]$, so we get
\begin{equation}
    R=[\chi x_{1}, \psi P_{1}\psi] +\ldots+[\chi x_{n}, \psi P_{n}\psi] .
 \end{equation}
More generally, if $U\subset \R^{n}$ is an open local chart which $\cE$ is trivializable, then any $R\in \psinfc(U,\cE)$ can be 
 written in the form
 \begin{equation}
      R=[a_{1}, P_{1}] +\ldots+[a_{n}, P_{n}],
      \label{eq:TracesZ.sum-commutators-smoothing-U}
 \end{equation}
 with $a_{1},\ldots,a_{n}$ in $C^{\infty}_{c}(U)$ and $P_{1},\ldots,P_{n}$ in $\pdoc^{-n+1}(U,\cE)$. 
 
 Now, let $(\varphi_{i})\subset C^{\infty}(M)$ be a partition of unity subordinated to an open covering $(U_{i})$ 
 of $M$ by local trivializing charts. For each index $i$ let $\psi_{i}\in C^{\infty}_{c}(U_{i})$ be such that $\psi_{i}=1$ near $\supp 
 \varphi_{i}$. Then for any $R\in \psinf(M,\cE)$ we have
 \begin{equation}
     R=\sum \varphi_{i}R\psi_{i} +\sum \varphi_{i}R(1-\psi_{i}).
%      \label{¥}
 \end{equation}
 For each index $i$ the operator $ \varphi_{i}R\psi_{i}$ belongs to $\psinfc(U_{i},\cE)$, so 
 by the first part of the proof it can be written as a sum of commutators of the form~(\ref{eq:TracesZ.sum-commutators-smoothing-U}). 
 Moreover, the operator $S:=\sum \varphi_{i}R(1-\psi_{i})$ is smoothing and has a Schwartz kernel that vanishes on the diagonal, so its trace vanishes 
 and by Lemma~\ref{lem:TracesCZ.smoothing-operators} it can be written a the sum of two commutators in $\psinf(M,\cE)$. Hence $R$ can be put in the 
 form~(\ref{eq:TracesZ.commutators-cP0}).  The proof is now complete. 
\end{proof}

\begin{remark}
    Wodzicki~\cite{Wo:PhD} also proved that any smoothing operator is a sum of \psido\ commutators.
\end{remark}

\begin{remark}
    It follows from the proof of 
    Lemma~\ref{lem:TracesZ.smoothing-operators}  that
    the operators $P_{1},\ldots,P_{n}$ in~(\ref{eq:Traces.sum-commutators-R}) can be chosen to be of singular parity-class. 
    Therefore any $R\in \psinf(M,\cE)$ can be written in the form~(\ref{eq:TracesZ.commutators-cP0}) with 
    operators $P_{1},\ldots, P_{N}$ in $\Psi^{-n+1}(M,\cE)$ of singular parity-class. Notice that they cannot choose them to be of regular 
    parity-class, since otherwise this would imply the vanishing of the canonical trace on smoothing operators, which is obviously wrong. 
\end{remark}

Combining Proposition~\ref{prop:Traces.sum-commutators-M1} and Proposition~\ref{prop:TracesZ.commutators-Psinf} we immediately get:

\begin{proposition}\label{prop:TracesZ.commutators-cP0}
 Let $m \in \Z$ and set $\tilde{m}=\max(m,-n)$.  Then any $P\in \Psi^{m}(M,\cE)$ such that $c_{P}(x)=0$ can be written as a sum of commutators of the form 
    \begin{equation}
        P=[a_{1},P_{1}]+\ldots+ [a_{N},P_{N}]+[R_{1},R_{2}]+[R_{3},R_{4}],
    \end{equation}
   where the functions $a_{j}$ are in $C^{\infty}(M)$, the operators $P_{j}$ are in $\Psi^{\tilde{m}+1}(M,\cE)$ and the operators $R_{j}$ are 
   in $\psinf(M,\cE)$. 
\end{proposition}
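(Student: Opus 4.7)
The plan is to assemble the statement directly from the two previously established propositions, with no additional analytic work beyond keeping track of orders.

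First, I apply Proposition~\ref{prop:Traces.sum-commutators-M1} to the given $P\in\Psi^{m}(M,\cE)$. The hypothesis of that proposition requires either that $m$ not be an integer~$\geq -n$ (which is automatic when $m<-n$) or that $c_P(x)=0$; in our case the assumption $c_P(x)=0$ ensures it in all cases. This yields a decomposition
\begin{equation*}
 P = [a_1,P_1]+\cdots+[a_N,P_N] + R,
\end{equation*}
with $a_j\in C^{\infty}(M)$, $P_j\in \Psi^{m+1}(M,\cE)$, and $R\in \psinf(M,\cE)$.

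Next, I apply Proposition~\ref{prop:TracesZ.commutators-Psinf} to the smoothing remainder $R$, which produces a decomposition
\begin{equation*}
 R = [b_1,Q_1]+\cdots+[b_{N'},Q_{N'}] + [R_1,R_2] + [R_3,R_4],
\end{equation*}
with $b_j\in C^{\infty}(M)$, $Q_j\in \Psi^{-n+1}(M,\cE)$, and $R_1,\ldots,R_4\in \psinf(M,\cE)$. Substituting this back into the preceding expression for $P$ gives a single sum of the required shape.

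The only detail to check is that all the \psido\ factors appearing in the commutators can be regarded as lying in the common class $\Psi^{\tilde{m}+1}(M,\cE)$. This is immediate: since $\tilde{m}=\max(m,-n)$, we have both $m\leq \tilde{m}$ and $-n\leq \tilde{m}$, hence $\Psi^{m+1}(M,\cE)\subset \Psi^{\tilde{m}+1}(M,\cE)$ and $\Psi^{-n+1}(M,\cE)\subset \Psi^{\tilde{m}+1}(M,\cE)$. I do not foresee any obstacle here — the role of $\tilde{m}$ is precisely to absorb the two different orders generated by the two ingredient propositions, and the combined decomposition yields exactly the statement.
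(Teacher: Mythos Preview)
Your proof is correct and follows exactly the approach indicated in the paper, which simply states that the result is obtained by combining Proposition~\ref{prop:Traces.sum-commutators-M1} and Proposition~\ref{prop:TracesZ.commutators-Psinf}. Your explicit verification that both $\Psi^{m+1}(M,\cE)$ and $\Psi^{-n+1}(M,\cE)$ are contained in $\Psi^{\tilde m+1}(M,\cE)$ spells out the one bookkeeping point the paper leaves implicit.
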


%      
%     On the other hand, if we are only interested in writing $P$ as a sum of \psido\ commutators then, as shown by Wodzicki~\cite{Wo:PhD}, 
%     we can write any smoothing operators as the sum of at most three \psido\ commutators. 
%     For instance if we can pick an elliptic operator $P\in 
%     \Psi^{0}(M,\cE)$ with a 
%     non-zero Fredholm index and if $Q\in \Psi^{0}(M,\cE)$ is a parametrix for $P$, then the operator $[P,Q]$ is smoothing and by standard Fredholm 
%     index theory we have $\Tra [P,Q]=\ind P\neq 
%     0$. By combining this with Lemma~\ref{lem:TracesCZ.smoothing-operators} and~(\ref{eq:TracesCZ.smoothing-commutators}) we see that 
%     any $R \in \psinf(M,\cE)$ agrees with $\frac{\Tra R}{\ind P}[P,Q]$ up to the sum of two commutators in $\Psi^{-\infty}(M,\cE)$, hence 
%     $R$ is the sum of three \psido\ commutators.

Next, in the sequel we let $\Gamma_{0}\in \Psi^{-n}(\Rn)$ denote the operator with Schwartz kernel $k_{\Gamma_{0}}(x,y)=-\log|x-y|$. In particular we have 
$c_{\Gamma_{0}}(x)=1$. 

\begin{lemma}\label{lem:TracesZ.cGamma0-commutators}
Let $c\in C^{\infty}_{c}(\Rn)$ be such that $\int c(x)dx=0$. Then there exist functions $c_{1},\ldots,c_{n}$ in $C_{c}^{\infty}(\Rn)$ so that we have
\begin{equation}
    c\Gamma_{0}= [\partial_{x_{1}},c_{1}\Gamma_{0}] +\ldots +[\partial_{x_{n}},c_{n}\Gamma_{0}] +Q,
     \label{eq:Traces.sum-commutators-cGamma0}
\end{equation}
for some $Q\in \Psi^{-n}(\Rn)$ such that $c_{Q}(x)=0$.
\end{lemma}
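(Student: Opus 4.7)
The plan is to reduce the statement to a standard divergence lemma for compactly supported functions and then exploit the translation invariance of $\Gamma_{0}$.

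The key auxiliary fact I would invoke is the following: if $c\in C_{c}^{\infty}(\Rn)$ satisfies $\int c\,dx=0$, then $c=\sum_{j=1}^{n}\partial_{x_{j}}c_{j}$ for some $c_{j}\in C_{c}^{\infty}(\Rn)$. In one dimension this is simply the primitive $c_{1}(x)=\int_{-\infty}^{x}c(t)\,dt$, which is compactly supported precisely because $\int c\,dt=0$. The general case follows by a short induction using a fixed bump $\chi$ with $\int\chi=1$ to peel off one variable at a time. This is really the only analytic content of the lemma.

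With such $c_{j}$ in hand, the Leibniz rule gives $[\partial_{x_{j}},c_{j}\Gamma_{0}]=(\partial_{x_{j}}c_{j})\Gamma_{0}+c_{j}[\partial_{x_{j}},\Gamma_{0}]$, and summing over $j$ yields $c\Gamma_{0}=\sum_{j}[\partial_{x_{j}},c_{j}\Gamma_{0}]+Q$ with $Q:=-\sum_{j}c_{j}[\partial_{x_{j}},\Gamma_{0}]$. Since the Schwartz kernel $-\log|x-y|$ of $\Gamma_{0}$ depends only on $x-y$, the operator $\Gamma_{0}$ is a convolution operator and therefore commutes with every constant-coefficient derivative, so $[\partial_{x_{j}},\Gamma_{0}]=0$. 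Consequently $Q=0$, which trivially lies in $\Psi^{-n}(\Rn)$ and satisfies $c_{Q}\equiv 0$.

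The only real obstacle is ensuring that the primitives in the divergence lemma can be chosen with compact support, which is precisely what the hypothesis $\int c\,dx=0$ delivers; no pseudodifferential machinery beyond the Leibniz rule and translation invariance is needed. Should one prefer not to invoke the exact identity $[\partial_{x_{j}},\Gamma_{0}]=0$, one may instead observe that since $\partial_{x_{j}}$ has constant coefficients the symbol calculus forces $[\partial_{x_{j}},\Gamma_{0}]$ to have order $\leq -n$ with vanishing homogeneous component of degree $-n$, so $c_{[\partial_{x_{j}},\Gamma_{0}]}=0$ by~\eqref{eq:NCR.formula-cP}, and hence $c_{Q}=0$ as well.
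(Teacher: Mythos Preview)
Your argument is correct and follows essentially the same route as the paper: both proofs invoke the divergence lemma to write $c=\sum_{j}\partial_{x_{j}}c_{j}$ with $c_{j}\in C_{c}^{\infty}(\Rn)$ and then compare $\sum_{j}[\partial_{x_{j}},c_{j}\Gamma_{0}]$ with $c\Gamma_{0}$. Your use of the Leibniz rule together with the translation invariance of $\Gamma_{0}$ (so that $[\partial_{x_{j}},\Gamma_{0}]=0$) is in fact slightly cleaner than the paper's explicit kernel computation, since it yields $Q=0$ outright rather than merely $c_{Q}=0$.
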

 \begin{proof}
 Since $c(x)$ has compact support and we have $\int c(x)dx=0$ there exist functions $c_{1},\ldots,c_{n}$ in $C^{\infty}_{c}(\Rn)$ such that 
 $c=\sum_{j=1}^{n}\partial_{x_{j}}c_{j}$ (see, e.g., \cite[pp.~24-25]{Po:JFA1}). 
 Set $P= \sum_{j=1}^{n}[\partial_{x_{j}},c_{j}\Gamma_{0}]$. Then $P$ is a \psido\ of order $-n$. Moreover, 
 as by definition $\Gamma_{0}$ has Schwartz kernel $k_{\Gamma_{0}}(x,y)=-\log|x-y|$, the 
  Schwartz kernel of $P$ is
 \begin{multline}
     k_{P}(x,y)= \sum_{j=1}^{n}-(\partial_{x_{j}}-\partial_{y_{j}})(c_{j}(x) \log|x-y|)   \\
     = - \sum_{j=1}^{n}\partial_{x_{j}}c_{j}(x)\log|x-y| - \sum_{j=1}^{n} c_{j}(x)(x_{j}-y_{j})|x-y|^{-2}.
%      \label{¥}
 \end{multline}
 Thus we have $c_{P}(x)=  \sum_{j=1}^{n}\partial_{x_{j}}c_{j}(x)=c(x)=c_{c\Gamma_{0}}(x)$. It then follows that $c\Gamma_{0}= 
 \sum_{j=1}^{n}[\partial_{x_{j}},c_{j}\Gamma_{0}] +Q$ with $Q\in \Psi^{-n}(\Rn)$ such that $c_{Q}(x)=0$.
 \end{proof}

Let $\rho \in C^{\infty}_{c}(\Rn)$ and $\chi\in C^{\infty}_{c}(\Rn)$ be such that $\int \rho(x)dx=1$ and $\chi=1$ near $\supp 
\rho$. Then we have:
 \begin{lemma}\label{lem:TracesZ.commutators-Rn}
  Any $P\in \pdoc^{\Z}(\Rn)$ of order $m\geq -n$ can be written in the form
  \begin{equation}
      P=(\Res P) \rho \Gamma_{0} \chi + [\psi \partial_{x_{1}} \psi, c_{1}\Gamma_{0}\psi] +\ldots + [\psi \partial_{x_{n}} \psi, c_{n}\Gamma_{0}\psi] +Q, 
%       \label{¥}
  \end{equation}
  for some $\psi, c_{1},\ldots,c_{n}$ in $C^{\infty}_{c}(\Rn)$ and some $Q\in \pdoc^{m}(\Rn)$ such that $c_{Q}(x)=0$. 
 \end{lemma}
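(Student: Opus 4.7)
The plan is to reduce the statement to Lemma~\ref{lem:TracesZ.cGamma0-commutators} via an elementary density reduction, and then insert the cutoff $\psi$ to put the commutators in the prescribed compactly-supported form.

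First, since $P$ has compactly supported Schwartz kernel, the log-density $c_{P}$ lies in $C^{\infty}_{c}(\Rn)$ and $\Res P = \int c_{P}(x)\, dx$. Hence the function $c(x) := c_{P}(x) - (\Res P)\rho(x)$ is compactly supported, smooth, and has zero integral, so by the standard lemma on compactly supported functions with vanishing integral (cf.~\cite[pp.~24--25]{Po:JFA1}) there exist $c_{1},\ldots,c_{n} \in C^{\infty}_{c}(\Rn)$ with $c = \sum_{j=1}^{n}\partial_{x_{j}}c_{j}$. I would then choose $\psi \in C^{\infty}_{c}(\Rn)$ equal to $1$ on an open neighborhood of $\supp c_{1} \cup \cdots \cup \supp c_{n}$, and set
\[
Q := P - (\Res P)\rho\Gamma_{0}\chi - \sum_{j=1}^{n}[\psi\partial_{x_{j}}\psi,\, c_{j}\Gamma_{0}\psi].
\]
All the subtracted operators have compactly supported Schwartz kernel and order $-n \leq m$, so $Q \in \pdoc^{m}(\Rn)$. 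Thus the proof reduces to verifying $c_{Q}(x) \equiv 0$. Since $\chi \equiv 1$ on $\supp\rho$, the kernel $-\rho(x)\log|x-y|\chi(y)$ gives $c_{\rho\Gamma_{0}\chi}(x) = \rho(x)$, so the desired vanishing of $c_{Q}$ boils down to the pointwise identity
\[
c_{[\psi\partial_{x_{j}}\psi,\, c_{j}\Gamma_{0}\psi]}(x) = \partial_{x_{j}} c_{j}(x), \qquad j=1,\ldots,n.
\]

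To establish this identity, I would compute the Schwartz kernel of $C_{j} := [\psi\partial_{x_{j}}\psi, c_{j}\Gamma_{0}\psi]$ directly by composing kernels, integrating by parts as in the proof of Lemma~\ref{lem:TracesZ.cGamma0-commutators}. For $x \notin \supp c_{j}$, every resulting term carries a factor of $c_{j}(x)$ or $\partial_{x_{j}}c_{j}(x)$, so the kernel of $C_j$ vanishes identically in $y$, yielding $c_{C_{j}}(x) = 0 = \partial_{x_{j}}c_{j}(x)$. For $x \in \supp c_{j}$, the cutoff $\psi$ equals $1$ in a neighborhood $W_{x}$ of $x$, so all $y$-factors involving $\partial_{y_{j}}\psi(y)$ or $\psi^{3}(y)-\psi(y)$ vanish for $y\in W_x$, and the kernel reduces near the diagonal to $-\partial_{x_{j}}c_{j}(x)\log|x-y|$ modulo smooth remainders, which is exactly the singularity of $[\partial_{x_{j}}, c_{j}\Gamma_{0}]$ computed in Lemma~\ref{lem:TracesZ.cGamma0-commutators}. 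This yields $c_{C_{j}}(x) = \partial_{x_{j}}c_{j}(x)$ for every $x$.

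Summing over $j$ and using $\sum_{j}\partial_{x_{j}}c_{j} = c$ gives $c_{Q} = c_{P} - (\Res P)\rho - c = 0$, completing the proof. The main obstacle is precisely the cutoff step: replacing $[\partial_{x_{j}}, c_{j}\Gamma_{0}]$ by $[\psi\partial_{x_{j}}\psi, c_{j}\Gamma_{0}\psi]$ produces a globally different operator, and one must verify that this modification is invisible to the log-density. The verification rests on the locality of $c_{\cdot}$ at the diagonal, together with the fact that $\psi$ is identically $1$ in a neighborhood of the diagonal over $\supp c_{j}$; carrying this out requires the explicit kernel bookkeeping sketched above.
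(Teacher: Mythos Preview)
Your proposal is correct and follows essentially the same approach as the paper. The only organizational difference is that the paper first invokes Lemma~\ref{lem:TracesZ.cGamma0-commutators} to write $c\Gamma_{0}=\sum_{j}[\partial_{x_{j}},c_{j}\Gamma_{0}]+Q$ and then inserts the cutoff via the algebraic identity $\psi[\partial_{x_{j}},c_{j}\Gamma_{0}]\psi=[\psi\partial_{x_{j}}\psi,c_{j}\Gamma_{0}\psi]-c_{j}\Gamma_{0}(1-\psi^{2})\partial_{x_{j}}\psi$ (the correction being smoothing), whereas you define the cutoff commutators from the start and verify $c_{C_{j}}=\partial_{x_{j}}c_{j}$ directly from locality of the log-density at the diagonal; the underlying computation is the same.
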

 \begin{proof}
      Let $P\in \pdoc^{m}(\Rn)$ and set $c(x)=c_{P}(x)-(\Res P)\rho(x)$. Then $c(x)$ is in $C^{\infty}_{c}(\Rn)$ and we have $\int c(x)dx=\int 
      c_{P}(x)dx-\Res P=0$. Therefore, by Lemma~\ref{lem:TracesZ.cGamma0-commutators} 
      there exist functions
$c_{1},\ldots,c_{n}$ such that
\begin{equation}
    c\Gamma_{0}= [\partial_{x_{1}},c_{1}\Gamma_{0}] +\ldots +  [\partial_{x_{n}},c_{n}\Gamma_{0}] +Q,
     \label{eq:Traces.sum-commutators-cGamma02}
\end{equation}
for some $Q\in \Psi^{-n}(\Rn)$ such that $c_{Q}(x)=0$. 

Let $\psi\in C^{\infty}_{c}(\Rn)$ be such that $\psi=1$ near $\supp c \cup \supp c_{1}\cup  
\ldots \cup \supp c_{n}$. Then for $j=1,\ldots,n$ the operator $ \psi [\partial_{x_{j}},c_{j}\Gamma_{0}]\psi$ is equal to
\begin{multline}
\psi (\partial_{x_{j}})\psi c_{j}\Gamma_{0}\psi - 
c_{j}\Gamma_{0} (\partial_{x_{j}})\psi  =    
 [\psi (\partial_{x_{j}})\psi, c_{j}\Gamma_{0}\psi] 
- c_{j}\Gamma_{0}(1-\psi^{2}) (\partial_{x_{j}})\psi.  
     \label{eq:TracesZ.sum-commutators-cGamma02-psi.psi}
\end{multline}
Notice that each operator $c_{j}\Gamma_{0}(1-\psi^{2}) (\partial_{x_{j}})\psi$ is smoothing and has  a compactly supported Schwartz kernel. 
Therefore, by combining~(\ref{eq:Traces.sum-commutators-cGamma02}) and~(\ref{eq:TracesZ.sum-commutators-cGamma02-psi.psi}) we get
 \begin{equation}
     c\Gamma_{0} \psi  =\psi c\Gamma_{0}\psi = %\sum_{j=1}^{n}\psi [\partial_{x_{j}},c_{j}\Gamma_{0}] \psi+ \psi Q\psi =
     \sum_{j=1}^{n} [\psi (\partial_{x_{j}})\psi, c_{j}\Gamma_{0}\psi] +Q,
      \label{eq:TracesZ.sum-commutators-cGamma0psi}
\end{equation}
for some $Q\in \pdoc^{-n}(\Rn)$ such that $c_{Q}(x)=0$. 

Next,  the operator $(\Res P) \rho \Gamma_{0}\chi+ c \Gamma_{0}\psi$ agrees with $(\Res P) \rho \Gamma_{0}+ c 
\Gamma_{0}=c_{P}\Gamma_{0}$ up to a smoothing operator, so its logarithmic singularity is equal to 
$c_{c_{P}\Gamma_{0}}(x)=c_{P}(x)c_{\Gamma_{0}}(x)=c_{P}(x)$. 
Thus 
$P= (\Res P) \rho \Gamma_{0}\chi+ c \Gamma_{0}\psi +Q$, 
for some $Q\in \pdoc^{-m}(\Rn)$ such that $c_{Q}(x)=0$. Together with~(\ref{eq:TracesZ.sum-commutators-cGamma0psi}) this shows that 
\begin{equation}
    P= (\Res P) \rho \Gamma_{0}\chi+ [\psi (\partial_{x_{1}})\psi, c_{1}\Gamma_{0}\psi] +\ldots + [\psi (\partial_{x_{n}})\psi, c_{n}\Gamma_{0}\psi] +Q,
%     \label
\end{equation}
for some $Q\in \pdoc^{-m}(\Rn)$ such that $c_{Q}(x)=0$. The lemma is thus proved. 
\end{proof}
 
%  This lemma can be extended to \psidos\ acting on sections of $\cE$ as follows. 
 \begin{proposition}\label{prop:TracesZ.commutators-U}
     Let $U\subset M$ be an open trivializing local chart. Then there exists $P_{0}\in \pdoc(U,\cE)$ so that 
     any $P\in \pdoc^{m}(U,\cE)$, $m \in \Z$, can be written in the form
     \begin{multline}
         P=(\Res P)P_{0} +\sum_{j=1}^{n}[a_{j},P_{j}]+\sum_{j=1}^{n}[L_{j},Q_{j}] +[Q_{n+1},Q_{n+2}]\\ +[R_{1}+R_{2}]+[R_{3}+R_{4}],
          \label{eq:TracesZ.commutators-Rn-Cr}
     \end{multline}
     where the functions $a_{j}$ are in $C^{\infty}_{c}(U)$,  the operators
     $P_{j}$ are in $\pdoc^{\tilde{m}+1}(U,\cE)$  with $\tilde{m}=\sup(m,-n)$, the $L_{j}$ are compactly supported first order differential 
     operators, the $R_{j}$ are in $\psinfc(U,\cE)$ and the operators $Q_{j}$ are in $\pdoc^{-n}(U,\cE)$ and can be chosen to be zero when  $c_{P}(x)=0$.
 \end{proposition}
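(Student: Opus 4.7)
The plan is to prove Proposition~\ref{prop:TracesZ.commutators-U} by combining Lemma~\ref{lem:TracesZ.commutators-Rn}, which extracts the logarithmic singularity for scalar \psidos, with Proposition~\ref{prop:TracesZ.commutators-cP0}, which handles operators satisfying $c_{P}(x)\equiv 0$, and then propagating the result from the scalar case to the matrix case using the trivialisation $\cE|_{U}\cong U\times\C^{r}$. In this trivialisation, an operator in $\pdoc^{m}(U,\cE)$ is an $r\times r$ matrix of scalar compactly supported \psidos\ on $U$. A convenient choice is $P_{0}=r^{-1}\rho\Gamma_{0}\chi\otimes\op{id}_{\cE}$, for which $c_{P_{0}}(x)=r^{-1}\rho(x)\op{id}_{\cE}$ and hence $\Res P_{0}=\int\rho(x)\,dx=1$.

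Writing $P=(P_{kl})$ in the trivialisation and applying Lemma~\ref{lem:TracesZ.commutators-Rn} to each scalar entry produces
\begin{equation*}
P_{kl}=(\Res P_{kl})\rho\Gamma_{0}\chi+\sum_{j=1}^{n}[\psi\partial_{x_{j}}\psi,\,c_{j}^{(kl)}\Gamma_{0}\psi]+\tilde Q_{kl},
\end{equation*}
with $c_{\tilde Q_{kl}}(x)=0$. Since the scalar operator $\psi\partial_{x_{j}}\psi$ commutes with every constant matrix unit $E_{kl}$, multiplying by $E_{kl}$ and summing over $(k,l)$ assembles these into
\begin{equation*}
P=\sum_{k,l}(\Res P_{kl})\rho\Gamma_{0}\chi\,E_{kl}+\sum_{j=1}^{n}[L_{j},Q_{j}]+\tilde Q,
\end{equation*}
where $L_{j}=\psi\partial_{x_{j}}\psi$ is a compactly supported first-order differential operator, $Q_{j}=c_{j}\Gamma_{0}\psi\in\pdoc^{-n}(U,\cE)$ with $c_{j}=\sum_{k,l}c_{j}^{(kl)}E_{kl}$, and $\tilde Q=\sum_{k,l}\tilde Q_{kl}E_{kl}\in\pdoc^{m}(U,\cE)$ satisfies $c_{\tilde Q}(x)=0$.

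Proposition~\ref{prop:TracesZ.commutators-cP0} applied to $\tilde Q$ gives $\tilde Q=\sum_{j=1}^{n}[a_{j},A_{j}]+[R_{1},R_{2}]+[R_{3},R_{4}]$ on $M$, with $a_{j}\in C^{\infty}(M)$, $A_{j}\in\Psi^{\tilde m+1}(M,\cE)$, $R_{j}\in\psinf(M,\cE)$. I would then localise back to $U$ via a cutoff $\psi_{1}\in C_{c}^{\infty}(U)$ equal to $1$ on $\supp k_{\tilde Q}$, using the standard identity $\psi_{1}[a,A]\psi_{1}=[\chi_{1}a,\psi_{1}A\psi_{1}]$ modulo smoothing (with $\chi_{1}\equiv 1$ near $\supp\psi_{1}$); the smoothing defect created by this localisation is absorbed into $[R_{1},R_{2}]+[R_{3},R_{4}]$ via Lemma~\ref{lem:TracesCZ.smoothing-operators}. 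Finally, the residual block $\Delta:=\sum_{k,l}(\Res P_{kl})\rho\Gamma_{0}\chi E_{kl}-(\Res P)P_{0}$ equals $\rho\Gamma_{0}\chi\cdot\Theta$ for a constant \emph{traceless} matrix $\Theta\in\End(\C^{r})$ (since $\tr\Theta=\Res P-\Res P=0$). Writing $\Theta=[X,Y]$ as a single matrix commutator (possible by Shoda's theorem) and taking $\tilde\chi\in C_{c}^{\infty}(U)$ equal to $1$ on $\supp\rho\cup\supp\chi$, a direct computation gives $[X\tilde\chi,Y\rho\Gamma_{0}\chi]=[X,Y]\rho\Gamma_{0}\chi=\Delta$, expressing $\Delta$ as a single commutator of an $\End(\cE)$-valued multiplication operator with an element of $\pdoc^{-n}(U,\cE)$. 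When $c_{P}(x)\equiv 0$, every $c_{j}^{(kl)}$ and $\Theta$ vanishes, so the $Q_{j}$'s may indeed be taken to be zero, recovering the case $c_{P}=0$ directly from Proposition~\ref{prop:TracesZ.commutators-cP0}.

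The main obstacle is the matrix reconciliation in the last step, namely converting the entry-indexed sum $\sum(\Res P_{kl})\rho\Gamma_{0}\chi E_{kl}$ into the single scalar term $(\Res P)P_{0}$ modulo commutators of the allowed types. The traceless-matrix remainder $\Theta$ is precisely the obstruction distinguishing the matrix case from the scalar case and is what accounts for the extra commutator $[Q_{n+1},Q_{n+2}]$ in the statement; recasting $[X\tilde\chi,Y\rho\Gamma_{0}\chi]$ in the shape $[Q_{n+1},Q_{n+2}]$ with both factors in $\pdoc^{-n}(U,\cE)$ (rather than as an $[a_{j},P_{j}]$ term with matrix-valued $a_{j}$) is the most delicate step. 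The remaining bookkeeping with cutoffs and supports is routine.
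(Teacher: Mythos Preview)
Your strategy coincides with the paper's: trivialise, apply Lemma~\ref{lem:TracesZ.commutators-Rn} entrywise, dispose of the remainder with vanishing $c$-density via the local commutator identities, and treat the constant matrix $A=(\Res P_{kl})$ via Shoda's theorem. One minor remark: your detour through Proposition~\ref{prop:TracesZ.commutators-cP0} (stated on $M$) followed by relocalisation to $U$ is more circuitous than needed; the paper instead invokes the local identities~(\ref{eq:Traces.commutators.vanishing-cP}) and~(\ref{eq:TracesZ.sum-commutators-smoothing-U}) directly, which already yield compactly supported commutators inside $U$ and spare you the bookkeeping with $\psi_{1},\chi_{1}$ and the reabsorption of smoothing defects.

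The step you single out as delicate is the substantive one, and your caution is justified. The paper takes $Q_{n+j}=(\rho\Gamma_{0}\chi)\otimes A_{j}$ and asserts that $[Q_{n+1},Q_{n+2}]=(\rho\Gamma_{0}\chi)\otimes[A_{1},A_{2}]$; but in fact $[S\otimes A_{1},\,S\otimes A_{2}]=S^{2}\otimes[A_{1},A_{2}]$, so this identity fails. More to the point, for $n\geq 1$ any commutator of two operators in $\pdoc^{-n}$ has order $\leq -2n<-n$, hence vanishing logarithmic density; consequently no choice of $Q_{n+1},Q_{n+2}\in\pdoc^{-n}(U,\cE)$ can account for the traceless part $\rho(x)\Theta$ of $c_{P}(x)$, and the statement as literally phrased (both factors in $\pdoc^{-n}$) is too strong when $r\geq 2$. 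Your commutator $[X\tilde\chi,\,Y\rho\Gamma_{0}\chi]$ is the correct repair, with one factor a compactly supported matrix-valued multiplication operator (order $0$) and the other in $\pdoc^{-n}(U,\cE)$. This amended form is what the downstream application actually uses: the proof of Theorem~\ref{thm:NCRCT.unicity-NCR} only needs $P-(\Res P)P_{0}\in[\pdoc^{\Z}(U,\cE),\pdoc^{\Z}(U,\cE)]$, for which your commutator qualifies.
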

 \begin{proof}
   First, since $U$ is diffeomorphic to an open subset of $\Rn$ and $\cE$  is trivializable over $U$, 
  we may as well assume that $U=\Rn$ and $\cE$ is trivial. Thus we only have to prove the result for operators in 
  $\pdoc^{\Z}(\Rn,\C^{r})=\pdoc^{\Z}(\Rn)\otimes M_{r}(\C)$. 
   
   Second, it follows from~(\ref{eq:Traces.commutators.vanishing-cP}) 
   and~(\ref{eq:TracesZ.sum-commutators-smoothing-U}) that any $Q\in \pdoc^{m}(\Rn,\C^{r})$ such that $c_{Q}(x)=0$ can be written in the form
 \begin{equation}
       Q=[a_{1},P_{1}]+ \ldots + [a_{n},P_{n}],
      \label{eq:TracesZ.commutators-Rn-Cr-cQ=0}
 \end{equation}
 with $a_{1},\ldots,a_{n}$ in $C^{\infty}_{c}(\Rn)$ and $P_{1},\ldots,P_{n}$ in $\pdoc^{m}(\Rn,\C^{r})$. 
   
   Now, let $P\in \pdoc^{\Z}(\Rn,\C^{r})$ have order~$m\geq-n$. We set $P=(P_{k,l})_{1\leq j,k\leq n}$ and $A=(\Res P_{k,l})_{1\leq j,k\leq n}$. Then  
   applying Lemma~\ref{lem:TracesZ.commutators-Rn} to each operator $P_{j,k}$ shows that there exist compactly supported first order differential 
   operators $L_{1},\ldots,L_{n}$ and operators $Q_{1},\ldots,Q_{n}$ in $\pdoc^{-n}(\Rn,\C^{r})$ such that
\begin{equation}
      P=(\rho \Gamma_{0} \chi)\otimes A +[L_{1}, Q_{1}] +\ldots + [L_{1}, Q_{n}] + Q, 
%     \label
\end{equation}
  for some $Q\in \pdoc^{m}(\Rn,\C^{r})$ such that $c_{Q}(x)=0$. 
   
   As $\tr A=\sum \Res 
   P_{k,k}=\int \tr c_{P}(x)dx=\Res P$ the matrix $A-\frac{1}{n}(\Res P) I_{n}$ has a zero trace, hence is a commutator
   (see~\cite{Sh:UKM}, \cite{AM:MTZ}). Thus $A=\frac{1}{n}(\Res P) I_{n}+ [A_{1},A_{2}]$, $A_{j}\in M_{r}(\C)$. 
  Set $P_{0}=(\rho \Gamma_{0} \chi)\otimes ( \frac{1}{n}I_{n})$ and $Q_{n+j}=(\rho \Gamma_{0} \chi)\otimes A_{j}$. Then
   \begin{equation}
       P=(\Res P)P_{0}+ [L_{1}, Q_{1}] +\ldots + [L_{1}, Q_{n}] + [Q_{n+1},Q_{n+2}]+Q. 
    \end{equation}
   Combining this with~(\ref{eq:TracesZ.commutators-Rn-Cr-cQ=0}) then shows that $P$ can be put in the form~(\ref{eq:TracesZ.commutators-Rn-Cr}).
 \end{proof}
 
 We are now in position to prove Theorem~\ref{thm:NCRCT.unicity-NCR}. 
\begin{proof}[Proof of Theorem~\ref{thm:NCRCT.unicity-NCR}]
Let $\tau$ be a trace on $\pdoz(M,\cE)$, let $U \subset M$ be a local trivializing open chart, and let 
$\tau_{U}$ denote the restriction of $\tau$ to $\pdoc^{\Z}(U,\cE)$. By Proposition~\ref{prop:TracesZ.commutators-U} 
there exists $P_{0}\in \pdoc^{-n}(U,\cE)$ such that for any $P\in 
\pdoc^{\Z}(U,\cE)$ we have $P=(\Res P)P_{0}$ modulo $[\pdoc^{\Z}(U,\cE), \pdoc^{\Z}(U,\cE)]$. Thus, if we set $c_{U}:=\tau( P_{0})$ then we have
\begin{equation}
    \tau(P)=\tau[(\Res P)P_{0}]=c_{U} \Res P \qquad \forall P\in \pdoc^{\Z}(U,\cE). 
%     \label
\end{equation}

Let $\Lambda$ be the set of points $x\in M$ near which there is a trivializing open local chart $V$ such that $c_{V}=c_{U}$. This is a 
non-empty open subset of $M$. Let us show that $\Lambda$ is closed as well. Let $x\in \overline{\Lambda}$ and let $V\subset M$ be an open trivializing local 
chart near $x$. Let $y\in \Lambda \cap V$ and let $W$ be a trivializing open local chart near $y$ such that $c_{W}=c_{U}$. As we always can 
find an operator in $\pdoc^{\Z}(V\cup W,\cE)$ such that $\Res P\neq 0$, we must have $c_{V}=c_{V\cup W}=c_{W}=c_{U}$. Thus $x$ belongs to $\Lambda$. 
Hence $\Lambda$ is closed. Since $M$ is connected it follows that $\Lambda$ agrees with $M$, so there exists 
$c\in \C$ such that, for any  open trivializing  local chart $U\subset M$, we have 
\begin{equation}
    \tau(P)=c\Res P \qquad \forall P \in \pdoc^{\Z}(U,\cE).
     \label{eq:Traces.mutiple-local}
\end{equation}

Now, let $(\varphi_{i})$ be a finite partition of the unity subordinated to an open covering $(U_{i})$  of $M$ by local trivializing charts. 
For each index $i$ let $\psi_{i}\in C^{\infty}_{c}(U_{i})$ be such that 
$\psi_{i}=1$ near $\supp \varphi_{i}$. Then any $P \in 
\pdoz(M,\cE)$ can be written as  $P=\sum \varphi_{i} P\psi_{i}+R$, where $R$ is a smoothing operator. By Proposition~\ref{prop:TracesZ.commutators-cP0} 
the operator $R$ is a sum 
of commutators in $\pdoz(M,\cE)$ and for each index $i$ the operator $\varphi_{i} P\psi_{i}$ belongs to $\pdoc^{\Z}(U_{i},\cE)$. Therefore, 
from~(\ref{eq:Traces.mutiple-local}) we get 
\begin{equation}
 \tau(P)= \sum \tau(\varphi_{i}P\psi_{i})=\sum c\Res ( \varphi_{i}P\psi_{i})=c\Res (\sum \varphi_{i}P\psi_{i})=c\Res P.
%     \label
\end{equation}
This proves that $\tau$ is a constant multiple of the noncommutative residue.
\end{proof}

Finally, as a corollary of Theorem~\ref{thm:NCRCT.unicity-NCR} we have: 

\begin{corollary}[\cite{Wo:PhD}, \cite{Gu:RTCAFIO}]\label{cor:TracesZ.commutators}
    Suppose that $M$ is connected. Then an operator $P\in \pdoz(M,\cE)$ belongs to the commutator space $[\pdoz(M,\cE),\pdoz(M,\cE)]$ if an only if 
    $\Res P$ vanishes.
 \end{corollary}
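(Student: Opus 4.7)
The plan is to deduce this corollary from the uniqueness of the noncommutative residue (Theorem~\ref{thm:NCRCT.unicity-NCR}) via a standard algebraic duality argument. One direction is immediate: by Proposition~\ref{prop:NCRCT.trace-NCR} the noncommutative residue is a trace, so it vanishes on every element of $[\pdoz(M,\cE),\pdoz(M,\cE)]$. Hence if $P$ belongs to the commutator space then $\Res P=0$.

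For the converse implication, I would pass to the quotient vector space $V:=\pdoz(M,\cE)/[\pdoz(M,\cE),\pdoz(M,\cE)]$. Because $\Res$ vanishes on commutators, it descends to a well-defined linear form $\widetilde{\Res}:V\to \C$. Any linear form $\ell$ on $V$ pulls back to a linear functional on $\pdoz(M,\cE)$ which by construction vanishes on commutators, i.e.~a trace. By Theorem~\ref{thm:NCRCT.unicity-NCR}, connectedness of $M$ forces this trace to be a constant multiple of $\Res$, so $\ell$ is a constant multiple of $\widetilde{\Res}$. In other words, the algebraic dual space $V^{*}$ is spanned by $\widetilde{\Res}$ and is therefore at most one-dimensional.

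Now let $P\in \pdoz(M,\cE)$ with $\Res P=0$ and denote its class by $[P]\in V$. Then every linear form on $V$ takes the value zero on $[P]$, since every such form is of the shape $c\,\widetilde{\Res}$. The algebraic Hahn--Banach principle (any nonzero vector in a vector space can be extended to a basis, yielding a linear form that is nonzero on it) then forces $[P]=0$, i.e.~$P\in [\pdoz(M,\cE),\pdoz(M,\cE)]$, which is the desired conclusion.

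There is essentially no technical obstacle beyond the invocation of Theorem~\ref{thm:NCRCT.unicity-NCR} itself, since the duality argument is purely formal and the one potentially delicate point, namely the algebraic Hahn--Banach step on the infinite-dimensional space $V$, is entirely elementary. One should only observe that $\widetilde{\Res}$ is not identically zero on $V$ (which is clear: any $P\in \pdoz(M,\cE)$ whose symbol of degree $-n$ has nonvanishing fiberwise trace integral on the cosphere bundle satisfies $\Res P\neq 0$), so that $V^{*}$ is in fact exactly one-dimensional and $\widetilde{\Res}$ is injective on $V$.
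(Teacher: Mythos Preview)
Your proof is correct and follows essentially the same duality argument as the paper: both deduce from Theorem~\ref{thm:NCRCT.unicity-NCR} that the algebraic dual of $V=\pdoz(M,\cE)/[\pdoz(M,\cE),\pdoz(M,\cE)]$ is one-dimensional, and then use this to conclude that $\Res P=0$ forces $P$ into the commutator space. The only cosmetic difference is that the paper phrases the last step by first noting that $\dim V^{*}=1$ implies $\dim V=1$, then picking an explicit $P_{0}$ with $\Res P_{0}\neq 0$ so that $P\equiv (\Res P/\Res P_{0})\,P_{0}$ modulo commutators, whereas you invoke the algebraic Hahn--Banach separation directly; these are two equivalent formulations of the same linear-algebra fact.
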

\begin{proof}
  By Theorem~\ref{thm:NCRCT.unicity-NCR} the space of traces on $\pdoz(M,\cE)$ has dimension $1$, or equivalently, the dual space of
  $\pdoz(M,\cE)/[\pdoz(M,\cE),\pdoz(M,\cE)]$ has dimension 1. Hence the commutator space $[\pdoz(M,\cE),\pdoz(M,\cE)]$ has codimension 1. 
  Let $P_{0}\in \pdoz(M,\cE)$ be such that $\Res P_{0}\neq 0$. This implies that $P_{0}$ is not in 
  $[\pdoz(M,\cE),\pdoz(M,\cE)]$. As the latter has codimension 1 we see that, for any $P\in \pdoz(M,\cE)$, we have 
  \begin{equation}
      P=\lambda P_{0} \quad \bmod [\pdoz(M,\cE),\pdoz(M,\cE)], 
%       \label
  \end{equation}
  for some $\lambda \in \C$. Observe that $\Res P= \lambda \Res P_{0}$, so we have 
  \begin{equation}
       P=  \frac{\Res P}{\Res P_{0}}P_{0} \quad 
        \bmod [\pdoz(M,\cE),\pdoz(M,\cE)]. 
%       \label
  \end{equation}
  It then follows that $P$ is in $[\pdoz(M,\cE),\pdoz(M,\cE)]$ if and only if $\Res P$ vanishes. 
\end{proof}

\section{Traces on zero'th order \psidos\ ($n\geq 2$)}
\label{sec:TracesZ0}
 The aim of this section is to prove Theorem~\ref{thm:NCRCT.Traces0} about the characterization in 
 dimension~$\geq 2$ of traces on zero'th order \psidos. 
 
 Recall that for any $P\in  \Psi^{0}(M,\cE)$ the zero'th order symbol of $P$ uniquely defines a section $\sigma_{0}(P)\in C^{\infty}(S^{*}M,\End 
 \cE)$, where $S^{*}M=T^{*}M/\R_{+}$ denotes the cosphere bundle of $M$. In addition, if $L$  is a linear form on $C^{\infty}(S^{*}M)$ 
 then its associated leading symbol trace $\tau_{L}$ is the trace on $\Psi^{0}(M,\cE)$ given by
\begin{equation}
    \tau_{L}(P)=L[\tr_{\cE}\sigma_{0}(P)] \qquad \forall P\in \Psi^{0}(M,\cE).
%     \label{¥}
\end{equation}

Next, let $(\varphi_{i})$ be a partition of the unity subordinated to a covering of $M$ by open domains $U_{i}$ 
of local chart maps $\kappa_{i}:U_{i}\rightarrow V_{i}\subset \Rn$ over which there are trivialization maps $\tau_{i}:\cE_{|U_{i}}\rightarrow U_{i}\times 
\C^{r}$. For index $i$ let $\psi_{i}\in C^{\infty}_{c}(U_{i})$ be such that $\psi_{i}=1$ near $\supp \varphi_{i}$. In addition let 
$\chi\in C^{\infty}_{c}(\Rn)$ be such that $\chi(\xi)=1$ near $\xi=0$. 

For $\sigma \in C^{\infty}(S^{*}M,\End \cE)$ we let $P_{\sigma}\in \Psi^{0}(M,\cE)$ be the \psido\ given by
\begin{equation}
    P_{\sigma}=\sum \varphi_{i} [\tau_{i}^{*}\kappa_{i}^{*}p_{i}(x,D)] \psi_{i}, \quad p_{i}(x,\xi)=(1-\chi(\xi))(\kappa_{i*}\tau_{i*}\sigma)(x,|\xi|^{-1}\xi).
    \label{eq:Traces0.Psigma}
\end{equation}
On each chart $U_{i}$ the operator $\varphi_{i} \tau_{i}^{*}\kappa_{i}^{*}p_{i}(x,D) \psi_{i}$ has principal symbol $\varphi_{i}\sigma $, so we have 
$\sigma_{0}(P)=\sum \varphi_{i}\sigma=\sigma$. Furthermore, the following holds.
% This means that the map $\sigma \rightarrow P_{\sigma}$ is a cross-section of the zero'th order symbol map 
% $P\rightarrow \sigma_{0}(P)$.  

\begin{lemma}\label{lem:Traces0.sigma-commutators}
    Let $\sigma \in C^{\infty}(S^{*}M,\End \cE)$. Then:\smallskip
    
    1) We have $c_{P_{\sigma}}(x)=0$.\smallskip
    
    2)  There exist $\sigma_{1},\ldots,\sigma_{N}$ in $C^{\infty}(S^{*}M,\End \cE)$ and 
    $Q\in \Psi^{-1}(M,\cE)$ such that 
    \begin{gather}
        \sigma = \frac{1}{r}(\tr_{\cE}\sigma) \op{id}_{\cE} + [\sigma_{1},\sigma_{2}]+\ldots+[\sigma_{N-1},\sigma_{N}],
         \label{eq:Traces0.sigma-commutators-cE}\\
         P_{\sigma}=P_{ \frac{1}{r}(\tr_{\cE}\sigma) \op{id}_{\cE}} +[P_{\sigma_{1}},P_{\sigma_{2}}]+\ldots+[P_{\sigma_{N-1}},P_{\sigma_{N}}]+Q.
         \label{eq:Traces0.Psigma-commutators-cE}
    \end{gather}
 \end{lemma}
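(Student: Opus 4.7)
The plan is to handle the two assertions in turn, with Part~2 reduced to a pointwise matrix-algebra fact combined with the fact that $\sigma\mapsto P_{\sigma}$ is linear and respects commutators modulo $\Psi^{-1}$.

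For Part~1, I would observe that in each local chart the symbol $p_{i}(x,\xi)=(1-\chi(\xi))(\kappa_{i*}\tau_{i*}\sigma)(x,\xi/|\xi|)$ agrees with the single homogeneous symbol $(\kappa_{i*}\tau_{i*}\sigma)(x,\xi/|\xi|)$ outside a compact neighborhood of $\xi=0$. Therefore, in its asymptotic expansion, all homogeneous components of degree $\neq 0$ vanish identically; in particular the degree $-n$ component is zero, so by~(\ref{eq:NCR.formula-cP}) the local density $c_{p_{i}(x,D)}$ vanishes. Since $c_{P}(x)$ transforms as an $\End\cE$-valued $1$-density (Proposition~\ref{prop:Log-sing.density}), transporting via $\kappa_{i}^{*}\tau_{i}^{*}$ preserves the vanishing on $M$. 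A direct inspection of the Schwartz kernel shows that left multiplication by $\varphi_{i}$ and right multiplication by $\psi_{i}$ multiply the logarithmic singularity by $\varphi_{i}(x)\psi_{i}(x)$, still zero. Summing over $i$ yields $c_{P_{\sigma}}(x)=0$.

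For Part~2, I would first establish the pointwise decomposition~(\ref{eq:Traces0.sigma-commutators-cE}). The section $\tilde{\sigma}:=\sigma-\frac{1}{r}(\tr_{\cE}\sigma)\op{id}_{\cE}$ is fiberwise trace-free. In any local trivialization of $\cE$, a trace-free $r\times r$ matrix-valued smooth function decomposes as a finite sum of commutators of smooth matrix-valued functions via the elementary identities $E_{ij}=[E_{ii},E_{ij}]$ for $i\neq j$ and $E_{ii}-E_{jj}=[E_{ij},E_{ji}]$. Patching these local decompositions together with a partition of unity on $S^{*}M$ subordinate to trivializing open sets gives $\sigma_{1},\ldots,\sigma_{N}\in C^{\infty}(S^{*}M,\End\cE)$ such that $\tilde{\sigma}=\sum_{j}[\sigma_{2j-1},\sigma_{2j}]$, which is~(\ref{eq:Traces0.sigma-commutators-cE}).

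For~(\ref{eq:Traces0.Psigma-commutators-cE}), the key observation is that the quantization $\sigma\mapsto P_{\sigma}$ defined by~(\ref{eq:Traces0.Psigma}) is linear in $\sigma$, and that for any $\alpha,\beta\in C^{\infty}(S^{*}M,\End\cE)$ the standard principal-symbol calculus for matrix-valued \psidos\ gives $\sigma_{0}([P_{\alpha},P_{\beta}])=[\alpha,\beta]$, so $[P_{\alpha},P_{\beta}]$ and $P_{[\alpha,\beta]}$ have the same principal symbol and hence differ by an element of $\Psi^{-1}(M,\cE)$. Applying $P$ to~(\ref{eq:Traces0.sigma-commutators-cE}) and using linearity yields
\begin{equation*}
P_{\sigma}=P_{\frac{1}{r}(\tr_{\cE}\sigma)\op{id}_{\cE}}+\sum_{j}P_{[\sigma_{2j-1},\sigma_{2j}]}=P_{\frac{1}{r}(\tr_{\cE}\sigma)\op{id}_{\cE}}+\sum_{j}[P_{\sigma_{2j-1}},P_{\sigma_{2j}}]+Q
\end{equation*}
with $Q\in\Psi^{-1}(M,\cE)$. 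The only mildly subtle step is patching the pointwise commutator decomposition of $\tilde{\sigma}$ into a global one, which the partition of unity handles; everything else is direct symbol calculus.
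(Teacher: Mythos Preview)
Your proof is correct and follows essentially the same route as the paper: for Part~1 you and the paper both argue that each $p_{i}$ has no degree~$-n$ component so $c_{p_{i}(x,D)}=0$, then transport and cut off; for Part~2 you both first prove the symbol decomposition~(\ref{eq:Traces0.sigma-commutators-cE}) locally and globalize by a partition of unity, then deduce~(\ref{eq:Traces0.Psigma-commutators-cE}) from the fact that $P_{[\alpha,\beta]}$ and $[P_{\alpha},P_{\beta}]$ share principal symbol.

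The one genuine difference is in the local commutator decomposition of the trace-free part $\tilde{\sigma}$. You use the elementary matrix-unit identities $E_{ij}=[E_{ii},E_{ij}]$ and $E_{ii}-E_{jj}=[E_{ij},E_{ji}]$, yielding $\tilde{\sigma}$ as a \emph{sum} of commutators with smooth entries. The paper instead invokes the stronger fact (from~\cite{Ka:OCCHTZ}) that a smooth family of trace-zero matrices is a \emph{single} commutator $[\sigma^{(1)},\sigma^{(2)}]$. Your approach is more self-contained and entirely sufficient here, since the lemma only asks for a finite sum; the paper's route gives a sharper bound on $N$ but relies on a nontrivial external result. For the globalization step the paper uses the ``square'' trick $\sum\varphi_{i}^{2}=1$ so that $\varphi_{i}^{2}[\sigma^{(1)}_{i},\sigma^{(2)}_{i}]=[\varphi_{i}\sigma^{(1)}_{i},\varphi_{i}\sigma^{(2)}_{i}]$ is a global commutator; you should make your patching argument equally explicit, since a plain partition of unity $\sum\varphi_{i}=1$ gives $\varphi_{i}[\alpha,\beta]=[\varphi_{i}\alpha,\psi_{i}\beta]$ only after introducing an auxiliary $\psi_{i}\equiv 1$ on $\supp\varphi_{i}$.
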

\begin{proof}
    1) In~(\ref{eq:Traces0.Psigma}) the symbol $p_{i}(x,\xi)$ has no homogeneous component of degree $-n$, so we have
    $c_{p_{i}(x,D)}(x)=0$. In addition, since on $C^{\infty}_{c}(U_{i})$ the operators
    $\varphi_{i} [\tau_{i}^{*}\kappa_{i}^{*}p_{i}(x,D) ]\psi_{i}$ and $\varphi_{i} [\tau_{i}^{*}\kappa_{i}^{*}p_{i}(x,D)]$ agree up to a smoothing 
    operator, we have  $c_{\varphi_{i} \tau_{i}^{*}\kappa_{i}^{*}p_{i}(x,D) \psi_{i}}(x)=\varphi_{i}(x) c_{\tau_{i}^{*}\kappa_{i}^{*}p_{i}(x,D)}(x) 
    =\varphi_{i}(x)\tau_{i}^{*}\kappa_{i}^{*}c_{p_{i}(x,D)}(x)=0$. 
    Hence we  have $c_{P_{\sigma}}(x)= \sum c_{\varphi_{i} \tau_{i}^{*}\kappa_{i}^{*}p_{i}(x,D) \psi_{i}}(x)=0$.\smallskip 
    
    2) As for $\sigma_{1}$ and $\sigma_{2}$ in $C^{\infty}(S^{*}M,\End \cE)$ the operators $P_{[\sigma_{1},\sigma_{2}]}$ and 
    $[P_{\sigma_{1}},P_{\sigma_{2}}]$ both have
     principal symbol $[\sigma_{1},\sigma_{2}]$, hence agree modulo an operator in $\Psi^{-1}(M,\cE)$,
    we see that (\ref{eq:Traces0.Psigma-commutators-cE}) follows from~(\ref{eq:Traces0.sigma-commutators-cE}). Therefore, we only have to prove the latter.
     
    Next, any matrix with vanishing trace is a commutator. In fact, it can be seen from the proof in~\cite{Ka:OCCHTZ} that this 
    result  extends to the setting of smooth families of matrices. Therefore,  if $U$ is an open subset of $M$ over which $\cE$ is 
    trivializable, then for any $\sigma \in 
    C^{\infty}(S^{*}U,\End \cE)$ there exist sections $\sigma^{(1)}$ and $\sigma^{(2)}$ in $C^{\infty}(S^{*}U,\End \cE)$ such that
    \begin{equation}
        \sigma(x,\xi)= \frac{1}{r}(\tr_{\cE_{x}}\sigma(x,\xi)) \op{id}_{\cE_{x}} 
        +[\sigma^{(1)}(x,\xi),\sigma^{(2)}(x,\xi)].
         \label{eq:Traces0.sigma-commutators-Cr}
    \end{equation}
 
    Now, let $\sigma \in C^{\infty}(S^{*}M,\End \cE)$ and let $(\varphi_{i})\subset C^{\infty}(M)$ be a finite family of smooth functions such that 
    $\sum \varphi_{i}^{2}=1$ and each function $\varphi_{i}$ 
    has a support contained in an open subset $U_{i}\subset M$ over which $\cE$ is trivializable. For each index $i$ there exist $\sigma^{(j)}_{i}\in 
    C^{\infty}(S^{*}U,\End\cE)$, $j=1,2$,  such that on $S^{*}U_{i}$ we can write $\sigma$ in the form~(\ref{eq:Traces0.sigma-commutators-Cr}). 
    Then we have
    \begin{multline}
        \sigma(x,\xi)=\sum \varphi_{i}(x)^{2}\sigma(x,\xi)=  
          \frac{1}{r}(\tr_{\cE_{x}}\sigma(x,\xi)) \op{id}_{\cE_{x}}  +  \\
         +\sum [\varphi_{i}(x) \sigma^{(1)}_{i}(x,\xi),\varphi_{i}(x) \sigma^{(2)}_{i}(x,\xi)].
%         \label{¥}
    \end{multline}
    This shows that $\sigma(x,\xi)$ is of the form~(\ref{eq:Traces0.sigma-commutators-cE}). The proof is thus achieved.
\end{proof}

Next, we shall show that when $n\geq 2$ in Proposition~\ref{prop:TracesZ.commutators-U} we can replace the first order differential operators 
$L_{j}$ by zero'th order \psidos. The key point is the following alternative version of Lemma~\ref{lem:TracesZ.cGamma0-commutators}. 

\begin{lemma}\label{lem:Traces0.cDelta-n}
Assume $n\geq 2$ and let $c\in C^{\infty}_{c}(\Rn)$ be such that $\int c(x)dx=0$. Then there exist functions 
$c_{1},\ldots,c_{n}$ in $C_{c}^{\infty}(\Rn)$ such that 
\begin{equation}
    c(1+\Delta)^{-\frac{n}{2}}= \sum_{j=1}^{n}[\partial_{x_{j}}(1+\Delta)^{-\frac{1}{2}},c_{j}(1+\Delta)^{\frac{1-n}{2}}] +Q,
     \label{eq:Traces.sum-commutators-cDelta-n}
\end{equation}
with $Q\in \Psi^{-n}(\Rn)$ so that $c_{Q}(x)=0$.
\end{lemma}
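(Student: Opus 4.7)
The plan is to mimic the strategy of Lemma~\ref{lem:TracesZ.cGamma0-commutators} (the $\Gamma_{0}$ version), reducing everything to an explicit computation of the degree $-n$ part of the symbol of the commutator sum via Poisson brackets. Since $c \in C_{c}^{\infty}(\Rn)$ is compactly supported with $\int c\, dx = 0$, there exist $\tilde{c}_{1}, \ldots, \tilde{c}_{n} \in C_{c}^{\infty}(\Rn)$ such that $c = \sum_{j} \partial_{x_{j}}\tilde{c}_{j}$ (cf.~\cite[pp.~24--25]{Po:JFA1}). I introduce a scaling constant $\lambda$ (to be chosen) and set $c_{j} := \lambda \tilde{c}_{j}$, then define
\begin{equation*}
P := \sum_{j=1}^{n}[\partial_{x_{j}}(1+\Delta)^{-\frac{1}{2}},\, c_{j}(1+\Delta)^{\frac{1-n}{2}}].
\end{equation*}
Since the commutator of a zero'th order \psido\ and a \psido\ of order $1-n$ is of order $-n$, we have $P \in \Psi^{-n}(\Rn)$, and the lemma reduces to showing that for a suitable $\lambda$ we have $c_{P}(x) = c_{c(1+\Delta)^{-n/2}}(x)$; for then $Q := c(1+\Delta)^{-n/2} - P$ lies in $\Psi^{-n}(\Rn)$ and satisfies $c_{Q} = 0$.

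The heart of the proof is the Poisson bracket computation. Let $a_{j}(x,\xi) = i\xi_{j}/|\xi|$ be the principal symbol of $\partial_{x_{j}}(1+\Delta)^{-1/2}$ and $b_{j}(x,\xi) = c_{j}(x)|\xi|^{1-n}$ the principal symbol of $c_{j}(1+\Delta)^{(1-n)/2}$. The principal symbol of each commutator is $\frac{1}{i}\{a_{j},b_{j}\}$, a symbol of degree $-n$. Since $a_{j}$ is independent of $x$, a direct calculation gives
\begin{equation*}
    \frac{1}{i}\{a_{j},b_{j}\} = (\partial_{x_{j}}c_{j})|\xi|^{-n} - \xi_{j}|\xi|^{-n-2}\sum_{k}\xi_{k}(\partial_{x_{k}}c_{j}).
\end{equation*}
Summing over $j$ and using $\sum_{j}\partial_{x_{j}}c_{j} = \lambda c(x)$ yields
\begin{equation*}
\sigma_{-n}(P)(x,\xi) = \lambda c(x)|\xi|^{-n} - |\xi|^{-n-2}\sum_{j,k}\xi_{j}\xi_{k}(\partial_{x_{k}}c_{j}).
\end{equation*}

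Integrating over $S^{n-1}$ and applying the standard identity $\int_{S^{n-1}}\xi_{j}\xi_{k}d^{n-1}\xi = \frac{1}{n}\delta_{jk}\,\Vol(S^{n-1})$, the second sum collapses to $\frac{1}{n}\Vol(S^{n-1})\lambda c(x)$. Using the formula~\eqref{eq:NCR.formula-cP} for $c_{P}$, I obtain
\begin{equation*}
c_{P}(x) = (2\pi)^{-n}\Vol(S^{n-1})\cdot \frac{n-1}{n}\,\lambda\, c(x).
\end{equation*}
On the other hand, the principal symbol of $(1+\Delta)^{-n/2}$ is $|\xi|^{-n}$, so $c_{c(1+\Delta)^{-n/2}}(x) = (2\pi)^{-n}\Vol(S^{n-1})\,c(x)$. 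Setting $\lambda = n/(n-1)$ — which is well-defined precisely because $n \geq 2$ — forces the two densities to coincide, completing the proof.

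The main obstacle is the symbol calculation together with the sphere integral in the last step. This is where the dimension hypothesis $n \geq 2$ enters essentially: the factor $(n-1)/n$ must be nonzero in order to invert. In dimension $1$ the analogous computation degenerates, reflecting the genuinely different structure of traces on $\Psi^{0}(M,\cE)$ when $n=1$ that is the subject of Section~\ref{sec:TracesZ01}.
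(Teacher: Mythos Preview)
Your proof is correct and follows essentially the same approach as the paper: both compute the principal symbol of the commutator sum via the Poisson bracket, integrate over $S^{n-1}$ using $\int_{S^{n-1}}\xi_{j}\xi_{k}\,d^{n-1}\xi=\frac{1}{n}\delta_{jk}|S^{n-1}|$, and then rescale by the factor $\frac{n}{n-1}$ to match $c_{c(1+\Delta)^{-n/2}}$. The only cosmetic difference is that you introduce the scaling constant $\lambda$ at the outset and solve for it, whereas the paper first computes $c_{P}$ for the unscaled $P$ and then multiplies by $\frac{n}{n-1}$ at the end.
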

\begin{proof}
 First, since $c(1+\Delta)^{-\frac{n}{2}}$ is a \psido\ of order $-n$ with principal symbol $c(x)|\xi|^{-n}$ we have 
\begin{equation}
     c_{c(1+\Delta)^{-\frac{n}{2}}}(x)=(2\pi)^{-n}c(x)\int_{S^{n-1}}d^{n-1}\xi=\frac{|S^{n-1}|}{(2\pi)^{n}}c(x). 
%     \label{}
\end{equation}
    
Second, as $c(x)$ has compact support and we have $\int c(x)dx=0$ there exist functions $c_{1},\ldots,c_{n}$ in $C^{\infty}_{c}(\Rn)$ such that 
 $c=\sum_{j=1}^{n}\partial_{x_{j}}c_{j}$. Define
\begin{equation}
     P=\sum_{j=1}^{n}[\partial_{x_{j}}(1+\Delta)^{-\frac{1}{2}},c_{j}(1+\Delta)^{\frac{1-n}{2}}]. 
%     \label
\end{equation}
Then $P$ is a \psido\ of order  $-n$ with principal symbol $p_{-n}(x,\xi)$ is equal to 
 \begin{multline}
     \sum_{j,k=1}^{n}\frac{1}{i} \left[
     \partial_{\xi_{k}}(i\xi_{j}|\xi|^{-1})\partial_{x_{k}}(c_{j}(x)|\xi|^{1-n}) -   \partial_{\xi_{k}}(c_{j}(x)|\xi|^{1-n}) 
     \partial_{x_{k}}(i\xi_{j}|\xi|^{-1}) \right]       \\
     = \sum_{j=1}^{n}\partial_{x_{j}}c_{j}(x)|\xi|^{-n}-\sum_{j,k=1}^{n}\xi_{j}\xi_{k}\partial_{x_{k}}c_{j}(x)|\xi|^{-(n+2)}\\
     = c(x) |\xi|^{-n}-\sum_{j,k=1}^{n}\xi_{j}\xi_{k}\partial_{x_{k}}c_{j}(x)|\xi|^{-(n+2)}.
%      \label
 \end{multline}
Therefore, from~(\ref{eq:NCR.formula-cP}) we obtain
\begin{equation}
    (2\pi)^{n}c_{P}(x)= c(x) \int_{S^{n-1}}d^{n-1}\xi-\sum_{k=1}^{n} \partial_{x_{k}}c_{j}(x) \int_{S^{n-1}}\xi_{j}\xi_{k}d^{n-1}\xi.
%     \label
\end{equation}
 
If $k\neq j$ then for parity reasons the integral $\int_{\R^{n}}x_{j}x_{k}e^{-|x|^{2}}dx$ vanishes, but if we integrate it in polar coordinates then we get
\begin{equation}
    \int_{\R^{n}}x_{j}x_{k}e^{-|x|^{2}}dx=(\int_{0}^{\infty}r^{n+2}e^{-r^{2}}dr)(\int_{S^{n-1}}\xi_{j}\xi_{k}d^{n-1}\xi). 
%     \label
\end{equation}
Hence $\int_{S^{n-1}}\xi_{j}\xi_{k}d^{n-1}\xi=0$ for $k \neq j$. Furthermore, for $k=j$ we have 
\begin{equation}
    \int_{S^{n-1}}\xi_{j}^{2}d^{n-1}\xi=\frac{1}{n}\sum_{l=1}^{n} 
    \int_{S^{n-1}}\xi_{l}^{2}d^{n-1}\xi=\frac{1}{n}\int_{S^{n-1}}d^{n-1}\xi=\frac{|S^{n-1}|}{n}.
%     \label{¥}
\end{equation}
 
Combining all this we see that
 \begin{equation}
      (2\pi)^{n}c_{P}(x)=c(x)|S^{n-1}|-\sum_{j=1}^{n}\partial_{x_{j}}c_{j}(x)\frac{|S^{n-1}|}{n}=\frac{n-1}{n}|S^{n-1}|c(x).
%      \label
 \end{equation}
 Thus, if we set $Q= c(1+\Delta)^{-\frac{n}{2}}-\frac{n}{n-1}P$, then $c_{Q}(x)=\frac{|S^{n-1}|}{(2\pi)^{n}}c(x)-\frac{n}{n-1}c_{P}(x)=0$. As 
 $c(1+\Delta)^{-\frac{n}{2}}=\frac{n}{n-1}P+Q=\sum_{j=1}^{n}[\partial_{x_{j}}(1+\Delta)^{-\frac{1}{2}},\frac{n}{n-1}c_{j}(1+\Delta)^{\frac{1-n}{2}}] +Q$
 we then see that $ c(1+\Delta)^{-\frac{n}{2}}$ is of the form~(\ref{eq:Traces.sum-commutators-cDelta-n}). The lemma is thus proved. 
\end{proof}

Thanks to Lemma~\ref{lem:Traces0.cDelta-n} we may argue as in the proofs of Lemma~\ref{lem:TracesZ.commutators-Rn} 
and Proposition~\ref{prop:TracesZ.commutators-U} to get: 

 \begin{proposition}\label{prop:Traces0.commutators-U}
     Assume $n\geq 2$ and let $U \subset M$ be a local open chart over which $\cE$ is trivializable. Then there exists  
     $P_{0}\in \pdoc(U,\cE)$ such that 
     any $P\in \pdoc^{m}(U,\cE)$, $m\in\Z$, can be written in the form 
     \begin{equation}
         P=(\Res P)P_{0}+\sum_{j=1}^{2n}[A_{j},P_{j}]+[Q_{1},Q_{2}]+[R_{1},R_{2}]+[R_{3},R_{4}],
%          \label
     \end{equation}
     where the $A_{j}$ are in $\pdoc^{0}(U,\cE)$, the $P_{j}$ are in $\pdoc^{m+1}(U,\cE)$ , the operators $Q_{1}$ and $Q_{2}$ are 
     in $\pdoc^{-n}(U,\cE)$, and the $R_{j}$ are in $\psinfc(U,\cE)$.
 \end{proposition}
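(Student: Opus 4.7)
The plan is to repeat the proofs of Lemma~\ref{lem:TracesZ.commutators-Rn} and Proposition~\ref{prop:TracesZ.commutators-U} with the logarithmic operator $\Gamma_{0}$ replaced by $(1+\Delta)^{-n/2}$, and with Lemma~\ref{lem:TracesZ.cGamma0-commutators} replaced by its alternate version Lemma~\ref{lem:Traces0.cDelta-n}. As before one reduces at the outset to the case $U=\Rn$ and $\cE=\C^{r}$ trivial, so only operators in $\pdoc^{\Z}(\Rn,\C^{r})=\pdoc^{\Z}(\Rn)\otimes M_{r}(\C)$ need be treated.

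For the reference operator, fix $\rho\in C^{\infty}_{c}(\Rn)$ with $\int\rho=1$ and $\chi\in C^{\infty}_{c}(\Rn)$ equal to $1$ near $\supp\rho$, and set
\begin{equation*}
P_{0}=\frac{(2\pi)^{n}}{|S^{n-1}|}\bigl(\rho(1+\Delta)^{-n/2}\chi\bigr)\otimes \tfrac{1}{r}I_{r}.
\end{equation*}
Since $(1+\Delta)^{-n/2}$ has principal symbol $|\xi|^{-n}$, the logarithmic density of $P_{0}$ is $\rho(x)\tfrac{1}{r}I_{r}$ and $\Res P_{0}=1$. For a scalar $P\in\pdoc^{m}(\Rn)$ with $m\geq -n$, the function $c(x):=c_{P}(x)-(\Res P)\rho(x)$ is then compactly supported with $\int c=0$, so Lemma~\ref{lem:Traces0.cDelta-n} supplies $c_{1},\dots,c_{n}\in C^{\infty}_{c}(\Rn)$ with
\begin{equation*}
c(1+\Delta)^{-n/2}=\sum_{j=1}^{n}\bigl[\partial_{x_{j}}(1+\Delta)^{-1/2},\,c_{j}(1+\Delta)^{(1-n)/2}\bigr]+Q',
\end{equation*}
with $Q'\in\Psi^{-n}(\Rn)$ of vanishing $c_{Q'}$. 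The key gain over Lemma~\ref{lem:TracesZ.cGamma0-commutators} is that $\partial_{x_{j}}(1+\Delta)^{-1/2}$ is a \psido\ of order $0$, not a first-order differential operator.

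Next, picking a cutoff $\psi\in C^{\infty}_{c}(\Rn)$ equal to $1$ near $\supp c\cup\bigcup_{j}\supp c_{j}\cup\supp\rho$ and running the $\psi[\cdot,\cdot]\psi$ manipulation of~(\ref{eq:TracesZ.sum-commutators-cGamma02-psi.psi}) transforms the preceding identity into
\begin{equation*}
P=(\Res P)P_{0}+\sum_{j=1}^{n}\bigl[\psi\partial_{x_{j}}(1+\Delta)^{-1/2}\psi,\,c_{j}(1+\Delta)^{(1-n)/2}\psi\bigr]+\widetilde{Q},
\end{equation*}
for some $\widetilde{Q}\in\pdoc^{m}(\Rn)$ with compactly supported kernel and $c_{\widetilde{Q}}=0$. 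Proposition~\ref{prop:TracesZ.commutators-cP0} then expresses $\widetilde{Q}$ as a sum of $n$ commutators $[a_{j},P_{j}]$ with $a_{j}\in C^{\infty}_{c}(\Rn)\subset\pdoc^{0}(\Rn)$ and $P_{j}\in\pdoc^{\max(m,-n)+1}(\Rn)$, plus two commutators of smoothing operators. Concatenating the two lists yields the required $2n$ pairs $[A_{j},P_{j}]$ with $A_{j}\in\pdoc^{0}$. The matrix-valued case is handled exactly as in Proposition~\ref{prop:TracesZ.commutators-U}: apply the scalar statement entrywise to $P\in\pdoc^{m}(\Rn,\C^{r})$, then use the result of~\cite{Ka:OCCHTZ} that the trace-free part $A-\frac{1}{r}(\Res P)I_{r}$ of the residue matrix $A:=(\Res P_{k,l})$ is a commutator $[A_{1},A_{2}]$ in $M_{r}(\C)$; this yields the single extra term $[Q_{1},Q_{2}]$ with $Q_{1},Q_{2}\in\pdoc^{-n}(\Rn,\C^{r})$.

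The only delicate point is the cutoff step: one must verify that the discrepancy between $\psi[X,Y]\psi$ and $[\psi X\psi,\, Y\psi]$, after inserting $\psi$'s through the order-$0$ operators $\partial_{x_{j}}(1+\Delta)^{-1/2}$, is a smoothing operator with compact support. In Lemma~\ref{lem:TracesZ.commutators-Rn} this was immediate from the locality of $\partial_{x_{j}}$; here one has instead to invoke the pseudolocality of \psidos, using that $\psi$ equals $1$ on the supports of all the $c_{j}$'s so that the ``escaping'' factors $(1-\psi^{2})$ meet supports on which the other factor already vanishes.
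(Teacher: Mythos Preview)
Your proposal is correct and follows precisely the approach the paper itself indicates: the paper's entire proof of this proposition is the single sentence ``Thanks to Lemma~\ref{lem:Traces0.cDelta-n} we may argue as in the proofs of Lemma~\ref{lem:TracesZ.commutators-Rn} and Proposition~\ref{prop:TracesZ.commutators-U},'' and you have faithfully carried out that program, replacing $\Gamma_{0}$ by $(1+\Delta)^{-n/2}$ and the first-order operators $\partial_{x_{j}}$ by the zero'th-order operators $\partial_{x_{j}}(1+\Delta)^{-1/2}$. Your flagging of the cutoff step as the one place requiring care (pseudolocality rather than locality) is exactly right, and your explanation of why the error term $c_{j}(1+\Delta)^{(1-n)/2}(1-\psi^{2})\,\partial_{x_{j}}(1+\Delta)^{-1/2}\psi$ is smoothing with compact kernel is correct. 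One small remark: for the remainder $\widetilde{Q}$ with $c_{\widetilde{Q}}=0$ you should cite the compactly-supported local decomposition~(\ref{eq:TracesZ.commutators-Rn-Cr-cQ=0}) (which combines~(\ref{eq:Traces.commutators.vanishing-cP}) and~(\ref{eq:TracesZ.sum-commutators-smoothing-U})) rather than Proposition~\ref{prop:TracesZ.commutators-cP0}, which is stated on the compact manifold $M$; the content is the same but the setting differs.
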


We are now ready to prove Theorem~\ref{thm:NCRCT.Traces0}. 

\begin{proof}[Proof of Theorem~\ref{thm:NCRCT.Traces0}]
    Let $\tau$ be a trace on the algebra $\Psi^{0}(M,\cE)$. Let $U \subset M$ be a local open chart over which $\cE$ is 
    trivializable. By Proposition~\ref{prop:Traces0.commutators-U} there exists 
     $P_{0}\in \pdoc(U,\cE)$ such that for any $P\in \pdoc^{-1}(U,\cE)$ we have
    \begin{equation}
        P=(\Res P)P_{0} \qquad \bmod [\pdoc^{0}(U,\cE), \pdoc^{0}(U,\cE)]
%         \label{¥}
    \end{equation} 
    It follows that $\tau(P)=\tau(P_{0})\Res P$ for all $P\in \pdoc^{-1}(U,\cE)$. 
    As in the proof of Theorem~\ref{thm:NCRCT.unicity-NCR} we then can show that there exists $c \in \C$ such that  
    \begin{equation}
        \tau(P)=c \Res P \qquad \forall P\in \Psi^{-1}(M,\cE).
%         \label{¥}
    \end{equation}
  
    Next, for $P\in \Psi^{0}(M)$ we set $\tilde{\tau}(P)=\tau(P)-c\Res(P)$. This defines a trace on $\Psi^{0}(M,\cE)$ vanishing on 
    $\Psi^{-1}(M,\cE)$. In addition, we let $L$ be the linear form on $C^{\infty}(S^{*}M)$ such that
    \begin{equation}
        L(\sigma)=\tilde{\tau}(P_{ \frac{1}{r}\sigma \op{id}_{\cE}}) \qquad \forall \sigma \in C^{\infty}(S^{*}M).
%         \label{¥}
    \end{equation}
    
    Let $P \in \Psi^{0}(M,\cE)$.  Since $P-P_{\sigma_{0}(P)}$ has 
    order~$\leq -1$, by Lemma~\ref{lem:Traces0.sigma-commutators} 
    there exist $\sigma_{1},\ldots,\sigma_{N}$ in $C^{\infty}(S^{*}M,\End \cE)$ and 
    $Q\in \Psi^{-1}(M,\cE)$ such that 
    \begin{equation}
         P=P_{\frac{1}{r}(\tr_{\cE}\sigma) \op{id}_{\cE}} +[P_{\sigma_{1}},P_{\sigma_{2}}]+\ldots+[P_{\sigma_{N-1}},P_{\sigma_{N}}]+Q.
         \label{eq:Traces0.P-sum-commutators}
    \end{equation}
 Since $\tilde{\tau}$ is a trace on $\Psi^{0}(M,\cE)$ vanishing on 
    $\Psi^{-1}(M,\cE)$ we get
    \begin{equation}
        \tilde{\tau}(P)= \tilde{\tau}(P_{ \frac{1}{r}(\tr_{\cE}\sigma) \op{id}_{\cE}})=\tau_{L}(\tr_{\cE}\sigma).
%         \label{¥}
    \end{equation}
    Hence $\tau=\tilde{\tau}+c\Res =\tau_{L}+c\Res$.
    
    Next, let us show that the above decomposition of $\tau$ is unique. Suppose that we have another decomposition
    $\tau=\tau_{L_{1}}+c_{1}\Res P$ with $L_{1}\in C^{\infty}(S^{*}M)^{*}$ and  
    $c_{1}\in\C$. Let $P_{0}\in \Psi^{-1}(M,\cE)$ be such that $\Res P_{0}\neq 0$. 
    Then as $\tau_{L}$ vanishes
    on $\Psi^{-1}(M,\cE)$ we get $\tau(P_{0})=c\Res P_{0}$. Similarly, we have $\tau(P)=c_{1}\Res P_{1}$, so $c_{1}=c$. 
    
    On the other hand, if $\sigma \in C^{\infty}(S^{*}M)$ then it follows from Lemma~\ref{lem:Traces0.sigma-commutators} 
    that $\Res(P_{ \frac{1}{r}\sigma \op{id}_{\cE}})=0$, so  
    $\tau(P_{ \frac{1}{r}\sigma \op{id}_{\cE}})=\tau_{L}( \tau(P_{ \frac{1}{r}\sigma \op{id}_{\cE}})=L(\sigma)$.
   The same argument also shows that $\tau(P_{ \frac{1}{r}\sigma \op{id}_{\cE}})=L_{1}(\sigma)$, so we see that $L_{1}=L$. This shows that
    $L$ and $c$ are uniquely determined by $\tau$.  The proof is thus achieved.
  \end{proof}

 Finally, as a corollary to Theorem~\ref{thm:NCRCT.Traces0} we get: %can characterize sums of commutators in $\Psi^{0}(M,\cE)$. 

 \begin{corollary}
     Suppose that $M$ is connected and has dimension $\geq 2$. Then for an operator $P\in \Psi^{0}(M,\cE)$ the following are equivalent:\smallskip
     
     (i) $P$ belongs to the commutator space $[\Psi^{0}(M,\cE),\Psi^{0}(M,\cE)]$.\smallskip
     
     (ii) We have $\tr_{\cE}\sigma_{0}(P)(x,\xi)=0$ and $\Res P=0$.
 \end{corollary}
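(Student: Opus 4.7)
The plan is to prove the two implications separately, with the characterization of traces in Theorem~\ref{thm:NCRCT.Traces0} doing the essential work for the nontrivial direction.

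For $(\text{i})\Rightarrow(\text{ii})$ I would argue by linearity, reducing to the case of a single commutator $P=[A,B]$ with $A,B\in\Psi^0(M,\cE)$. The noncommutative residue is a trace on $\Psi^{\Z}(M,\cE)$ by Proposition~\ref{prop:NCRCT.trace-NCR}, so $\Res P=0$. For the symbol condition, the principal symbol is multiplicative, whence $\sigma_0([A,B])=[\sigma_0(A),\sigma_0(B)]$ pointwise on $S^*M$, and this is a pointwise commutator of matrices, so its fiberwise trace vanishes identically.

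For $(\text{ii})\Rightarrow(\text{i})$ I would argue by contradiction: suppose $P$ satisfies (ii) but $P\notin [\Psi^0(M,\cE),\Psi^0(M,\cE)]$. Then every element of the subspace $V=\C P+[\Psi^0(M,\cE),\Psi^0(M,\cE)]$ admits a unique decomposition $\lambda P+C$, so I can define a linear functional $\tau_0\colon V\to\C$ by $\tau_0(\lambda P+C)=\lambda$. By the algebraic Hahn--Banach theorem (Zorn's lemma) $\tau_0$ extends to a linear functional $\tau$ on $\Psi^0(M,\cE)$; since $\tau$ vanishes on $[\Psi^0(M,\cE),\Psi^0(M,\cE)]$ by construction, it is a trace with $\tau(P)=1$. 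Applying Theorem~\ref{thm:NCRCT.Traces0} to $\tau$ produces $L\in C^\infty(S^*M)^*$ and $c\in\C$ with $\tau=\tau_L+c\Res$; evaluating at $P$ and using (ii) yields
\[
1=\tau(P)=L(\tr_{\cE}\sigma_0(P))+c\Res P=L(0)+c\cdot 0=0,
\]
the desired contradiction.

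The main conceptual obstacle, compared to Corollary~\ref{cor:TracesZ.commutators}, is that the commutator space here has \emph{infinite} codimension (the space of traces is parametrized by $C^\infty(S^*M)^*\oplus\C$), so the elementary codimension-one argument used there no longer applies; the Hahn--Banach step is what makes up for this. A constructive alternative would be to use Lemma~\ref{lem:Traces0.sigma-commutators}(2) to write $P_{\sigma_0(P)}$ as commutators in $\Psi^0(M,\cE)$ modulo a $\Psi^{-1}$-term (possible because $\tr_{\cE}\sigma_0(P)=0$), reducing the problem to showing that any operator in $\Psi^{-1}(M,\cE)$ with vanishing residue lies in $[\Psi^0(M,\cE),\Psi^0(M,\cE)]$, then applying Proposition~\ref{prop:Traces0.commutators-U} chart by chart; but glueing the local normalizations $P_0^{(i)}$ from different charts into a single operator modulo $\Psi^0$-commutators requires rerunning the connectedness argument from the proof of Theorem~\ref{thm:NCRCT.unicity-NCR}, which is why I prefer the Hahn--Banach route.
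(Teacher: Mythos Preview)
Your proof is correct. The $(\text{i})\Rightarrow(\text{ii})$ direction matches the paper's argument. For $(\text{ii})\Rightarrow(\text{i})$ you take a genuinely different route: you invoke the algebraic Hahn--Banach theorem to manufacture a separating trace and then kill it with Theorem~\ref{thm:NCRCT.Traces0}. The paper instead proceeds constructively, and in fact follows almost exactly the ``constructive alternative'' you sketch in your final paragraph: it uses~(\ref{eq:Traces0.P-sum-commutators}) (which is Lemma~\ref{lem:Traces0.sigma-commutators}(2) applied to $P$) to write $P$ as a sum of commutators $[P_{\sigma_{j}},P_{\sigma_{j+1}}]$ plus a remainder $Q\in\Psi^{-1}(M,\cE)$ with $\Res Q=\Res P=0$, and then argues that such a $Q$ lies in $[\Psi^{0}(M,\cE),\Psi^{0}(M,\cE)]$. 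For this last step the paper does not redo the chart-gluing from scratch; it observes that the first part of the proof of Theorem~\ref{thm:NCRCT.Traces0} already shows that every linear form on $\Psi^{-1}(M,\cE)$ vanishing on $\Psi^{-1}(M,\cE)\cap[\Psi^{0}(M,\cE),\Psi^{0}(M,\cE)]$ is a multiple of $\Res$, so that this intersection has codimension one in $\Psi^{-1}(M,\cE)$ and the argument of Corollary~\ref{cor:TracesZ.commutators} applies verbatim.

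The trade-off is clear: your Hahn--Banach argument is shorter and derives the commutator characterization as a formal consequence of the trace classification, at the price of Zorn's lemma and of giving no information about \emph{how} $P$ decomposes into commutators. The paper's argument is longer but constructive, and it isolates the useful intermediate fact that $\Psi^{-1}(M,\cE)\cap[\Psi^{0}(M,\cE),\Psi^{0}(M,\cE)]=\ker\bigl(\Res|_{\Psi^{-1}(M,\cE)}\bigr)$.
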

 \begin{proof}
If $P$ belongs to  $[\Psi^{0}(M,\cE),\Psi^{0}(M,\cE)]$  then $\tr_{\cE}\sigma_{0}(P)(x,\xi)$ and $\Res P$ vanish.   
 Moreover, it follows from the arguments of the first part of the proof of Theorem~\ref{thm:NCRCT.Traces0} that any linear form on 
 $\Psi^{-1}(M,\cE)$ vanishing 
 on $\Psi^{-1}(M,\cE)\cap [\Psi^{0}(M,\cE),\Psi^{0}(M,\cE)]$ is a constant multiple of the noncommutative residue. Therefore, as in the 
 proof of Corollary~\ref{cor:TracesZ.commutators} we can show that an operator $Q\in \Psi^{-1}(M,\cE)$ is contained in $[\Psi^{0}(M,\cE),\Psi^{0}(M,\cE)]$ 
 if and only if $\Res Q$ vanishes. 
 
 Now, let $P\in \Psi^{0}(M,\cE)$ be such that $\tr_{\cE}\sigma_{0}(P)(x,\xi)=0$ and $\Res P=0$. By~(\ref{eq:Traces0.P-sum-commutators}) 
 there exist $\sigma_{1},\ldots,\sigma_{N}$ in $C^{\infty}(S^{*}M,\End \cE)$ and 
    $Q\in \Psi^{-1}(M,\cE)$ such that $P=[P_{\sigma_{1}},P_{\sigma_{2}}]+\ldots+[P_{\sigma_{N-1}},P_{\sigma_{N}}]+Q$.
   Observe that $\Res Q=\Res P=0$, so as $Q$ has order~$\leq -1$ it follows from the discussion above that $Q$ is contained in 
   $[\Psi^{0}(M,\cE),\Psi^{0}(M,\cE)]$. Incidentally $P$ is contained in $[\Psi^{0}(M,\cE),\Psi^{0}(M,\cE)]$ as well. 
 \end{proof}

\section{Traces on zero'th order \psidos\ ($n=1$)}
\label{sec:TracesZ01}
In this section we shall determine all the traces on $\Psi^{0}(M,\cE)$ in dimension 1. The key observation is the following. 

\begin{proposition}\label{prop:Traces01.symbol-1}
    1) For any $P \in \Psi^{0}(M,\cE)$ there exists a unique section $\sigma_{-1}(P)$ in $C^{\infty}(S^{*}M,\End \cE)$ such that, for any local chart 
    map $\kappa:U\rightarrow V$ for $M$ and any trivialization map $\tau:\cE_{|U}\rightarrow U\times \C^{r}$ of $\cE$ over $U$, we have
    \begin{equation}
       [ \kappa_{*}\tau_{*}\sigma_{-1}(P)](x,\xi)=(x,p_{-1}(x,\xi)) \qquad \forall (x,\xi)\in S^{*}V,
         \label{eq:Traces01.symbol-1}
    \end{equation}
    where $p_{-1}(x,\xi)$ is the symbol of degree $-1$ of $P$ in the local coordinates given by $\kappa$ and~$\tau$.\smallskip
    
    2) For any $P_{1}$ and $P_{2}$ in $\Psi^{0}(M,\cE)$ we have
    \begin{equation}
          \sigma_{-1}(P_{1}P_{2})=\sigma_{0}(P_{1})\sigma_{-1}(P_{2})+\sigma_{-1}(P_{1})\sigma_{0}(P_{2}).
           \label{eq:Traces0.product-symbol-1}
     \end{equation}
\end{proposition}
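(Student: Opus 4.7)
The crucial dimension-one observation is that any symbol $p_0(x,\xi)$ that is smooth on $U\times(\R\setminus 0)$ and homogeneous of degree $0$ in $\xi$ satisfies $\xi\,\partial_\xi p_0(x,\xi)=0$ by Euler's identity, hence $\partial_\xi p_0(x,\xi)=0$ for every $\xi\neq 0$; iterating, every positive-order $\xi$-derivative of $p_0$ vanishes off the zero section. My plan is to exploit this single fact to kill every correction term that would obstruct the invariance of the subleading symbol $p_{-1}$, thereby reducing the proposition to direct bookkeeping.

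For part~1), I would invoke the standard change-of-variables formula for classical \psido\ symbols: if $\kappa\colon V\to U$ is a diffeomorphism, the symbol $\tilde p$ of the pullback of $P$ on $V$ admits an expansion in which
\[
\tilde p_{-1}(x,\xi)=p_{-1}\bigl(\kappa(x),(\kappa'(x)^{t})^{-1}\xi\bigr)+\sum_{|\alpha|\geq 2}\frac{1}{\alpha!}\,(\partial_\eta^\alpha p_0)\bigl(\kappa(x),(\kappa'(x)^{t})^{-1}\xi\bigr)\,\phi_\alpha(x,\xi),
\]
the $|\alpha|=1$ correction being absent by the usual Kuranishi trick. In dimension one every $\partial_\eta^\alpha p_0$ with $|\alpha|\geq 1$ vanishes off $\eta=0$, so $\tilde p_{-1}$ reduces to its leading term. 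An analogous check for a change of bundle trivialization $g\colon U\to GL_r(\C)$, carried out by expanding the composition $g\,\#\,p\,\#\,g^{-1}$, again kills all potentially obstructing terms, since they all involve $\partial_\xi$ of the principal symbol. Thus the restrictions $p_{-1}(x,\xi)|_{|\xi|=1}$ patch into a well-defined global section $\sigma_{-1}(P)\in C^\infty(S^*M,\End\cE)$, uniquely determined by~(\ref{eq:Traces01.symbol-1}).

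For part~2), I would apply the symbol composition formula $p\,\#\,q\sim\sum_\alpha \frac{1}{i^{|\alpha|}\alpha!}\,\partial_\xi^\alpha p\cdot\partial_x^\alpha q$ to $p\sim p_0+p_{-1}+\dotsb$ and $q\sim q_0+q_{-1}+\dotsb$. The only contributions to the degree $-1$ component are the $\alpha=0$ terms $p_0q_{-1}$ and $p_{-1}q_0$: every $|\alpha|\geq 1$ contribution of degree $-1$ must involve $\partial_\xi^\alpha p_0$ and hence vanishes in dimension one, while the products $p_{-j}q_{-k}$ with $j+k\geq 2$ contribute only to degree $\leq -2$. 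Restricting to $S^*M$ gives the Leibniz-like identity~(\ref{eq:Traces0.product-symbol-1}).

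The main technical hurdle is writing the change-of-coordinates formula and the symbol-composition formula carefully enough that the vanishing of the $\partial_\xi^\alpha p_0$ corrections is transparent; once that is done, every remaining step is a routine verification specific to the one-dimensional setting.
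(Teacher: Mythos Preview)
Your proposal is correct and follows essentially the same route as the paper: both proofs hinge on the one-dimensional observation that $\partial_\xi p_0\equiv 0$ off the zero section (the paper phrases this as $p_0(x,\xi)=p_0(x,\pm 1)$, you via Euler's identity), then feed this into the standard change-of-variables and composition formulas to kill all correction terms at degree $-1$. The paper handles the bundle-trivialization step exactly as you sketch, via $(A\#p\#B)_{-1}=A\,p_{-1}\,B$, and part~2) is proved identically; the only cosmetic difference is that the paper does not single out the $|\alpha|=1$ term as absent ``by the Kuranishi trick'' but simply notes that every term beyond the leading one carries a $\partial_\xi^k p_0$ factor.
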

\begin{proof}
 First, let $\phi:U'\rightarrow U$ be a diffeomorphism between open subsets of $\R$ and let $P\in \Psi^{0}(U)$ have symbol $p(x,\xi)\sim \sum 
p_{-j}(x,\xi)$. Then the operator $P'=\phi^{*}P$ is a zero'th order \psido\ on $U'$ whose symbol $p^{\phi}(x,\xi)\sim \sum 
p^{\phi}_{-j}(x,\xi)$ is such that
   \begin{equation}
        p^{\phi}(x,\xi)\sim \sum_{k} a_{k}(x,\xi)(\partial_{\xi}^{k}p)(\phi(x),\phi'(x)^{-1}\xi),
         \label{eq:Traces01.diffeo-symbol}
    \end{equation}
    where $a_{k}(x,\xi)=\frac{1}{k!} \frac{\partial}{\partial y}\left. e^{i\rho_{x}(y)\xi}\right|_{y=x}$ and 
    $\rho_{x}(y)=\phi(y)-\phi(y)-\phi'(x)(x-y)$ (see, e.g.,~\cite{Ho:ALPDO3}). As $\left.d_{y}\rho_{x}\right|_{y=x}=0$ 
    the function $a_{k}(x,\xi)$ is polynomial in $\xi$ 
    of degree~$\leq \frac{k}{2}$, hence $a_{k}(x,\xi)=\sum_{2l\leq k} a_{kl}(x)\xi^{l}$ 
   with $a_{kl}(x)\in C^{\infty}(U')$. Thus, 
    \begin{equation}
        p^{\phi}_{-j}(x,\xi) = \sum_{\substack{j'+k-l=j\\ 2l\leq l}} a_{kl}(x)\xi^{l} 
        (\partial_{\xi}^{k}p_{-j'})(\phi(x),\phi'(x)^{-1}\xi). 
         \label{eq:Traces01.diffeo-homogeneous-symbols}
    \end{equation}

Observe that in dimension 1 the zero'th degree homogeneity implies that we have $p_{0}(x, \xi)=p_{0}(x,\pm 1)$ depending on the sign of $\xi$. In any 
case we have $\partial_{\xi}p_{0}(x,\xi)=0$. Thus for $j=-1$ Eq.~(\ref{eq:Traces01.diffeo-homogeneous-symbols}) reduces to
\begin{equation}
    p^{\phi}_{-1}(x,\xi)=p_{-1}(\phi(x),\phi'(x)^{-1}\xi). 
     \label{eq:Traces01.change-variable-p-1}
\end{equation}

Next, let $Q \in \Psi^{0}(U)$ have symbol $q\sim \sum q_{-j}$ and suppose that $P$ or $Q$ is properly supported. Then $PQ$ belongs to $\Psi^{0}(U)$ 
and has symbol $p\#q \sim \sum \frac{(-i)^{k}}{k!}\partial^{k}_{\xi}p\partial^{k}_{x}q$. Thus its symbol  $(p\#q)_{-1}(x,\xi)$ of degree $-1$ is 
\begin{equation}
       p_{0}q_{-1}+\frac{1}{i}\partial_{\xi}p_{0}\partial_{x}q_{0} +p_{-1}q_{0} 
        =p_{0}q_{-1}+p_{-1}q_{0}.
   \label{eq:Traces0.product-symbol-1-local}
\end{equation}

The formulas~(\ref{eq:Traces01.diffeo-homogeneous-symbols}) and~(\ref{eq:Traces01.change-variable-p-1}) extend \emph{verbatim} to 
vector-valued \psidos\ and matrix-valued symbols. In particular, for $P\in \Psi^{0}(U,\C^{r})$ with symbol $p(x,\xi)\sim \sum p_{-j}(x,\xi)$ 
and for $A$ and $B$ in $C^{\infty}(U,M_{r}(\C))$ the symbol of degree $-1$ of $APB$ is 
\begin{equation}
    (A\#p\# B)_{-1}=A(x)p_{-1}(x,\xi)B(x).
     \label{eq:Traces0.multiplications-functions-symbol-1}
\end{equation}
Together with~(\ref{eq:Traces01.change-variable-p-1}) this shows that for any $P \in \Psi^{0}(M,\cE)$ 
there is a unique section $\sigma_{-1}(P)\in C^{\infty}(S^{*}M,\End \cE)$ satisfying~(\ref{eq:Traces01.symbol-1}). 
Then~(\ref{eq:Traces0.product-symbol-1}) immediately follows 
from~(\ref{eq:Traces0.product-symbol-1-local}). 
\end{proof}

Next, given a  be a linear form on $L$ on $C^{\infty}(S^{*}M)$ we let $\rho_{L}$ denote the linear form on $\Psi^{0}(M,\cE)$ such that
\begin{equation}
    \rho_{L}(P)=L[\tr_{\cE}\sigma_{-1}(P)] \qquad \forall P\in \Psi^{0}(M,\cE).
%     \label{¥}
\end{equation}
If $P_{1}$ and $P_{2}$ are operators in $\Psi^{0}(M,\cE)$, then by~(\ref{eq:Traces0.product-symbol-1}) we have
\begin{multline}
    \rho_{L}(P_{1}P_{2})=L[\tr_{\cE}(\sigma_{0}(P_{1})\sigma_{-1}(P_{2})+\sigma_{-1}(P_{1})\sigma_{0}(P_{2}))] \\ =
    L[\tr_{\cE}(\sigma_{0}(P_{2})\sigma_{-1}(P_{1})+\sigma_{-1}(P_{2})\sigma_{0}(P_{1}))] = \rho_{L}(P_{2}P_{1}).
%     \label{¥}
\end{multline}
Thus $\rho_{L}$ is a trace on the algebra $\Psi^{0}(M,\cE)$. We shall call such a trace a \emph{subleading symbol trace}.
Notice that the noncommutative residue is such a trace, for we have
\begin{equation}
    \Res P= \int_{S^{*}M}\tr_{\cE}\sigma_{-1}(P)(x,\xi) dxd\xi \qquad \forall P\in \Psi^{0}(M,\cE),
%     \label{¥}
\end{equation}
where $dxd\xi$ is the Liouville form of $S^{*}M=T^{*}M/\R_{+}$.%, i.e, the form on $S^{*}M=T^{*}M/\R_{+}$ induced by the Liouville form of $T^{*}M$. 

On the other hand, as in~(\ref{eq:Traces0.Psigma}) we can construct a cross-section $\sigma \rightarrow Q_{\sigma}$ from $C^{\infty}(S^{*}M, \End \cE)$ 
to $\Psi^{-1}(M,\cE)$ such that $\sigma_{-1}(Q_{\sigma})=\sigma$ $\forall \sigma \in C^{\infty}(S^{*}M, \End \cE)$. More precisely, for $\sigma\in 
C^{\infty}(S^{*}M, \End \cE)$ we can define $Q_{\sigma}$ to be
\begin{equation}
    Q_{\sigma}=\sum \varphi_{i} [\tau_{i}^{*}\kappa_{i}^{*}q_{i}(x,D) ]\psi_{i}, \quad 
    q_{i}(x,\xi)=(1-\chi(\xi))|\xi|^{-1}(\kappa_{i*}\tau_{i*}\sigma)(x,\frac{\xi}{|\xi|}),
    \label{eq:Traces01.Qsigma}
\end{equation}
where the notation is the same as in~(\ref{eq:Traces0.Psigma}). Then the following holds. 

\begin{lemma}\label{lem:Traces01.sigma-commutators}
    Let $\sigma \in C^{\infty}(S^{*}M,\End \cE)$.\smallskip
    
    1) We have $\sigma_{-1}(P_{\sigma})=0$.\smallskip 
    
    2) There exist  $\sigma_{1},\ldots,\sigma_{N}$ in $C^{\infty}(S^{*}M,\End \cE)$ and $R_{1}, R_{2}$ in $\Psi^{-2}(M,\cE)$ so that  
    \begin{gather}
          P_{\sigma}=P_{ \frac{1}{r}(\tr_{\cE}\sigma) \op{id}_{\cE}} 
          +[P_{\sigma_{1}},P_{\sigma_{2}}]+\ldots+[P_{\sigma_{N-1}},P_{\sigma_{N}}]+R_{1},
          \label{eq:Traces0.Psigma-commutators}\\
        Q_{\sigma}=Q_{ \frac{1}{r}(\tr_{\cE}\sigma) \op{id}_{\cE}} 
          +[Q_{\sigma_{1}},Q_{\sigma_{2}}]+\ldots+[Q_{\sigma_{N-1}},Q_{\sigma_{N}}]+R_{2}.
          \label{eq:Traces0.Qsigma-commutators}
      \end{gather}
% for some $R_{1}$ and $R_{2}$ in $\Psi^{-2}(M,\cE)$.
 \end{lemma}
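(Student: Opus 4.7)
The plan is to exploit the linearity of the cross-sections $\sigma\mapsto P_{\sigma}$ and $\sigma\mapsto Q_{\sigma}$ defined by~(\ref{eq:Traces0.Psigma}) and~(\ref{eq:Traces01.Qsigma}), apply the pointwise matrix decomposition furnished by Lemma~\ref{lem:Traces0.sigma-commutators} at the level of sections of $\End\cE$ over $S^{*}M$, and then check the resulting operator identities modulo $\Psi^{-2}(M,\cE)$ by matching both the principal symbol $\sigma_{0}$ and the subleading symbol $\sigma_{-1}$ on each side. The dimension-one identity $\partial_{\xi}p_{0}\equiv 0$ for degree-zero homogeneous symbols, already central to Proposition~\ref{prop:Traces01.symbol-1}, supplies the symbolic cancellations that are unavailable in higher dimensions.

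For part~1, each local symbol $p_{i}(x,\xi)=(1-\chi(\xi))(\kappa_{i*}\tau_{i*}\sigma)(x,\xi/|\xi|)$ is homogeneous of degree zero in $\xi$ outside a compact set and hence has no degree-$(-1)$ component in its asymptotic expansion. Because $\psi_{i}\equiv 1$ on a neighborhood of $\supp\varphi_{i}$, we have $\varphi_{i}(x)\partial_{x}^{\alpha}\psi_{i}(x)\equiv 0$ for every $|\alpha|\geq 1$, so the cross-derivative contributions in the composition formula for the symbol of $\varphi_{i}[\tau_{i}^{*}\kappa_{i}^{*}p_{i}(x,D)]\psi_{i}$ vanish identically. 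The local symbol therefore reduces to $\varphi_{i}(x)p_{i}(x,\xi)$ modulo smoothing, with no degree-$(-1)$ contribution, and since $\sigma_{-1}$ is intrinsically defined by Proposition~\ref{prop:Traces01.symbol-1}, summing over the partition of unity gives $\sigma_{-1}(P_{\sigma})=0$.

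For the $P$-identity in part~2, Lemma~\ref{lem:Traces0.sigma-commutators} supplies $\sigma_{1},\ldots,\sigma_{N}\in C^{\infty}(S^{*}M,\End\cE)$ with $\sigma=\tfrac{1}{r}(\tr_{\cE}\sigma)\op{id}_{\cE}+\sum_{j}[\sigma_{2j-1},\sigma_{2j}]$, and by linearity of $\sigma\mapsto P_{\sigma}$ it suffices to prove that each discrepancy $P_{[\sigma_{2j-1},\sigma_{2j}]}-[P_{\sigma_{2j-1}},P_{\sigma_{2j}}]$ belongs to $\Psi^{-2}(M,\cE)$. Both operators have principal symbol $[\sigma_{2j-1},\sigma_{2j}]$; part~1 gives $\sigma_{-1}(P_{[\sigma_{2j-1},\sigma_{2j}]})=0$, while the product formula~(\ref{eq:Traces0.product-symbol-1}) combined with $\sigma_{-1}(P_{\sigma_{k}})=0$ forces $\sigma_{-1}([P_{\sigma_{2j-1}},P_{\sigma_{2j}}])=0$ as well. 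A zero'th-order \psido\ whose symbols of degrees $0$ and $-1$ both vanish lies in $\Psi^{-2}(M,\cE)$, so the aggregated discrepancy is the desired $R_{1}$.

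For the $Q$-identity I would run the analogous scheme with the cross-section $\sigma\mapsto Q_{\sigma}$ and the same $\sigma_{j}$, verifying the symbol match one step lower in the homogeneity ladder. The main obstacle is precisely this last check: because the $Q$-operators live in $\Psi^{-1}(M,\cE)$ rather than $\Psi^{0}(M,\cE)$, their pairwise compositions and commutators sit one order deeper in the symbol expansion, and one must verify that in dimension~$1$ the cancellation $\partial_{\xi}p_{0}\equiv 0$ kills the Poisson-bracket contributions that would otherwise spoil the matching at the relevant order. Once this dimension-one simplification of the composition formula is in place, $Q_{[\sigma_{2j-1},\sigma_{2j}]}$ differs from the corresponding $Q$-commutator by an element of $\Psi^{-2}(M,\cE)$, and the sum of these discrepancies provides $R_{2}$.
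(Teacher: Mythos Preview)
Your arguments for part~1 and for the $P$-identity in part~2 are essentially the paper's: the paper cites~(\ref{eq:Traces0.multiplications-functions-symbol-1}) in place of your cross-derivative remark, and it first obtains $R_{1}\in\Psi^{-1}$ from Lemma~\ref{lem:Traces0.sigma-commutators} before upgrading to $\Psi^{-2}$ by checking $\sigma_{-1}(R_{1})=0$ via~(\ref{eq:Traces0.product-symbol-1}), which is exactly your termwise verification that each $P_{[\sigma_{2j-1},\sigma_{2j}]}-[P_{\sigma_{2j-1}},P_{\sigma_{2j}}]$ lies in $\Psi^{-2}$.

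For the $Q$-identity, however, your proposed argument has a genuine gap that the dimension-one cancellation $\partial_{\xi}p_{0}\equiv 0$ cannot repair. Each $Q_{\sigma_{k}}$ lies in $\Psi^{-1}(M,\cE)$, so the commutator $[Q_{\sigma_{2j-1}},Q_{\sigma_{2j}}]$ lies in $\Psi^{-2}(M,\cE)$; its principal symbol $[\sigma_{2j-1},\sigma_{2j}]$ sits at degree $-2$. By contrast $Q_{[\sigma_{2j-1},\sigma_{2j}]}$ has principal symbol $[\sigma_{2j-1},\sigma_{2j}]$ at degree $-1$. The obstruction is therefore the pointwise matrix commutator itself, appearing one order too high, and no Poisson-bracket simplification touches it: the difference $Q_{[\sigma_{2j-1},\sigma_{2j}]}-[Q_{\sigma_{2j-1}},Q_{\sigma_{2j}}]$ has $\sigma_{-1}$ equal to $[\sigma_{2j-1},\sigma_{2j}]$ and hence does \emph{not} lie in $\Psi^{-2}$ unless that commutator vanishes. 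The paper's own one-line argument (``same principal symbols at order $-1$'') suffers from precisely the same defect, since the $[Q,Q]$ terms contribute nothing at degree $-1$ and the traceless part of $\sigma$ is left unaccounted for. To make the identity work (and to salvage its use in Theorem~\ref{thm:Traces01.main}) one should replace the commutators $[Q_{\sigma_{2j-1}},Q_{\sigma_{2j}}]$ by $[P_{\sigma_{2j-1}},Q_{\sigma_{2j}}]$: these lie in $\Psi^{-1}$, and~(\ref{eq:Traces0.product-symbol-1}) together with part~1 gives $\sigma_{-1}\bigl([P_{\sigma_{2j-1}},Q_{\sigma_{2j}}]\bigr)=[\sigma_{2j-1},\sigma_{2j}]$, after which your two-level symbol check goes through.
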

 \begin{proof}
   As  the symbol $p_{i}(x,\xi)$ in~(\ref{eq:Traces0.Psigma}) has no homogeneous component of degree~$-1$, 
   from~(\ref{eq:Traces01.symbol-1}) and~(\ref{eq:Traces0.multiplications-functions-symbol-1}) we get 
   $ \sigma_{-1}[\varphi_{i} \tau_{i}^{*}\kappa_{i}^{*}p_{i}(x,D) 
     \psi_{i}]=\varphi_{i}[\tau_{i}^{*}\kappa_{i}^{*}\sigma_{-1}(p_{i}(x,D))]\psi_{i}=0$.
% \begin{equation}
%      \sigma_{-1}[\varphi_{i} \tau_{i}^{*}\kappa_{i}^{*}p_{i}(x,D) 
%      \psi_{i}]=\varphi_{i}[\tau_{i}^{*}\kappa_{i}^{*}\sigma_{-1}(p_{i}(x,D))]\psi_{i}=0.
% %     \label{¥}
% \end{equation}
Hence $\sigma_{-1}(P_{\sigma})=\sum \sigma_{-1}[\varphi_{i} \tau_{i}^{*}\kappa_{i}^{*}p_{i}(x,D) 
     \psi_{i}]=0$.     
    
 Next, by Lemma~\ref{lem:Traces0.sigma-commutators} there exist sections $\sigma_{1},\ldots,\sigma_{N}$ in $C^{\infty}(S^{*}M,\End \cE)$ such that 
 $\sigma = \frac{1}{r}(\tr_{\cE}\sigma) \op{id}_{\cE} + \sum [\sigma_{j},\sigma_{j+1}]$. Then $Q_{\sigma}$ 
 and $Q_{ \frac{1}{r}(\tr_{\cE}\sigma) \op{id}_{\cE}} 
          + \sum [Q_{\sigma_{j}},Q_{\sigma_{j+1}}]$ are \psidos\ of order~$-1$ with same principal symbols, 
          so they agree modulo an operator in 
          $\Psi^{-2}(M,\cE)$. Hence $Q_{\sigma}$ is of the form~(\ref{eq:Traces0.Qsigma-commutators}). 

Finally, by~(\ref{eq:Traces0.Psigma-commutators-cE}) we have $P_{\sigma}=P_{\frac{1}{r}(\tr_{\cE}\sigma) \op{id}_{\cE}} + \sum 
[P_{\sigma_{j}},P_{\sigma_{j+1}}]+R_{1}$ with $R_{1}$ in 
 $\Psi^{-1}(M,\cE)$. By the first part of the lemma and by~(\ref{eq:Traces0.product-symbol-1}) we have
 $\sigma_{-1}(P_{\sigma_{j}}P_{\sigma_{j+1}})=
 \sigma_{0}(P_{\sigma_{j}})\sigma_{-1}(P_{\sigma_{j+1}})+\sigma_{-1}(P_{\sigma_{j}})\sigma_{0}(P_{\sigma_{j+1}})=0$. Thus $R_{1}$ is a 
 linear combination of zero'th order \psidos\ whose symbols of degree $-1$ vanish and so $\sigma_{-1}(R_{1})=0$. Since 
 $R_{1}$ has order $\leq -1$ it follows that $R_{1}$ belongs to $\Psi^{-2}(M,\cE)$, proving Eq.~(\ref{eq:Traces0.Psigma-commutators}).
 \end{proof}

Bearing all this in mind we have:
% In~(\ref{eq:Traces0.Qsigma-commutators}) it is important to have a remainder term of order~$\leq -2$ because by 
%  Proposition~\ref{prop:TracesZ.commutators-cP0} in dimension~$1$ any \psido\ of order~$\leq -2$ is a 
%  sum of commutators in $[\Psi^{0}(M,\cE),\Psi^{0}(M,\cE)]$. Bearing 
%  this in mind we shall now prove: 
 
 \begin{theorem}\label{thm:Traces01.main}
     Assume that $\dim M=1$. Then:\smallskip
     
     1) Any trace on $\Psi^{0}(M,\cE)$ can be uniquely written as the sum of a leading symbol trace and of a subleading symbol trace.\smallskip
     
     2) An operator $P\in \Psi^{0}(M,\cE)$ belongs to $[\Psi^{0}(M,\cE),\Psi^{0}(M,\cE)]$ if and only we have 
     $\tr_{\cE}\sigma_{0}(P)=\tr_{\cE}\sigma_{-1}(P)=0$.
 \end{theorem}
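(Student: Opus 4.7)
The plan follows the model of Theorem~\ref{thm:NCRCT.Traces0}, exploiting two features special to $n=1$: the intrinsic definition of $\sigma_{-1}$ on $\Psi^{0}(M,\cE)$ given by Proposition~\ref{prop:Traces01.symbol-1}, and the fact that $\Psi^{-2}(M,\cE)\subset [\Psi^{0}(M,\cE),\Psi^{0}(M,\cE)]$. The latter is the key preliminary: if $P\in \Psi^{-2}(M,\cE)$, then its top-degree symbol component has degree $\leq -2<-1=-n$, so $c_{P}(x)=0$, and Proposition~\ref{prop:TracesZ.commutators-cP0} applied with $m=-2$ and $\tilde m=\max(-2,-1)=-1$ expresses $P$ as a sum of commutators $[a_{j},P_{j}]$ with $a_{j}\in C^{\infty}(M)$ and $P_{j}\in \Psi^{0}(M,\cE)$, plus two commutators of smoothing operators. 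In particular every trace $\tau$ on $\Psi^{0}(M,\cE)$ annihilates $\Psi^{-2}(M,\cE)$.

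For part~1), given a trace $\tau$ I define linear forms $L_{0},L_{-1}$ on $C^{\infty}(S^{*}M)$ by $L_{0}(\sigma):=\tau(P_{\frac{1}{r}\sigma\op{id}_{\cE}})$ and $L_{-1}(\sigma):=\tau(Q_{\frac{1}{r}\sigma\op{id}_{\cE}})$. Lemma~\ref{lem:Traces01.sigma-commutators} then gives $\tau(P_{\sigma})=L_{0}(\tr_{\cE}\sigma)$ and $\tau(Q_{\sigma})=L_{-1}(\tr_{\cE}\sigma)$ for every $\sigma\in C^{\infty}(S^{*}M,\End \cE)$, since the $\psi$DO commutators in the formulas~(\ref{eq:Traces0.Psigma-commutators}) and~(\ref{eq:Traces0.Qsigma-commutators}) are killed by $\tau$ while the remainders lie in $\Psi^{-2}(M,\cE)$. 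Because $P-P_{\sigma_{0}(P)}\in \Psi^{-1}(M,\cE)$ has subleading symbol $\sigma_{-1}(P)$ by part~(1) of Lemma~\ref{lem:Traces01.sigma-commutators}, we can write $P=P_{\sigma_{0}(P)}+Q_{\sigma_{-1}(P)}+S$ with $S\in \Psi^{-2}(M,\cE)$, and hence $\tau(P)=\tau_{L_{0}}(P)+\rho_{L_{-1}}(P)$. Uniqueness is then immediate: if $\tau_{L_{0}}+\rho_{L_{-1}}=\tau_{L_{0}'}+\rho_{L_{-1}'}$, restricting to $\Psi^{-1}(M,\cE)$ (on which every $\tau_{L}$ vanishes) and evaluating on the operators $Q_{\frac{1}{r}\tilde\sigma\op{id}_{\cE}}$ forces $L_{-1}=L_{-1}'$; the resulting identity $\tau_{L_{0}-L_{0}'}=0$ evaluated on $P_{\frac{1}{r}\tilde\sigma\op{id}_{\cE}}$ then forces $L_{0}=L_{0}'$.

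For part~2), the forward direction is immediate from the fact, already established before the theorem, that both $\tau_{L}$ and $\rho_{L}$ are traces on $\Psi^{0}(M,\cE)$ for every $L\in C^{\infty}(S^{*}M)^{*}$; evaluating on all $L$ forces $\tr_{\cE}\sigma_{0}(P)=\tr_{\cE}\sigma_{-1}(P)=0$ for any $P\in [\Psi^{0}(M,\cE),\Psi^{0}(M,\cE)]$. Conversely, suppose both fiberwise traces vanish. The decomposition $P=P_{\sigma_{0}(P)}+Q_{\sigma_{-1}(P)}+S$ with $S\in \Psi^{-2}(M,\cE)\subset [\Psi^{0}(M,\cE),\Psi^{0}(M,\cE)]$, combined with Lemma~\ref{lem:Traces01.sigma-commutators}, places each summand in $[\Psi^{0}(M,\cE),\Psi^{0}(M,\cE)]$: the diagonal lifts $P_{\frac{1}{r}(\tr_{\cE}\sigma_{0}(P))\op{id}_{\cE}}$ and $Q_{\frac{1}{r}(\tr_{\cE}\sigma_{-1}(P))\op{id}_{\cE}}$ both vanish under our hypothesis, the remaining $\psi$DO commutators from~(\ref{eq:Traces0.Psigma-commutators}) and~(\ref{eq:Traces0.Qsigma-commutators}) are manifestly in $[\Psi^{0}(M,\cE),\Psi^{0}(M,\cE)]$ (using also $[\Psi^{-1}(M,\cE),\Psi^{-1}(M,\cE)]\subset [\Psi^{0}(M,\cE),\Psi^{0}(M,\cE)]$), and the error terms lie in $\Psi^{-2}(M,\cE)$. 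The main conceptual point, and the reason the argument diverges from the $n\geq 2$ case, is the dimension-$1$ phenomenon that zero-homogeneous symbols have vanishing $\xi$-derivative; this is what makes $\tr_{\cE}\sigma_{-1}$ itself a trace and pushes $\Psi^{-2}(M,\cE)$ into the commutator space, collapsing the special role of the noncommutative residue into the broader notion of a subleading symbol trace.
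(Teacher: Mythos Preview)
Your proof is correct and follows essentially the same route as the paper: establish $\Psi^{-2}(M,\cE)\subset [\Psi^{0}(M,\cE),\Psi^{0}(M,\cE)]$ via Proposition~\ref{prop:TracesZ.commutators-cP0}, use the decomposition $P=P_{\sigma_{0}(P)}+Q_{\sigma_{-1}(P)}+S$ with $S\in\Psi^{-2}(M,\cE)$ together with Lemma~\ref{lem:Traces01.sigma-commutators} to reduce to the scalar parts, and define $L_{0},L_{-1}$ exactly as in the paper's~(\ref{eq:Traces0.L1-L2}). The only cosmetic difference is in the uniqueness argument: the paper evaluates first on $P_{\frac{1}{r}\sigma\op{id}_{\cE}}$ (using $\sigma_{-1}(P_{\frac{1}{r}\sigma\op{id}_{\cE}})=0$) to pin down the leading form, whereas you restrict to $\Psi^{-1}(M,\cE)$ first to pin down the subleading form; both orders work.
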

 \begin{proof}
   First, if $P\in \Psi^{0}(M,\cE)$ belongs to $[\Psi^{0}(M,\cE),\Psi^{0}(M,\cE)]$ then 
     $\tr_{\cE}\sigma_{0}(P)$ and $\tr_{\cE}\sigma_{-1}(P)$ both vanish. 
     
     Conversely, let $P\in \Psi^{0}(M,\cE)$. Since by Lemma~\ref{lem:Traces01.sigma-commutators} 
     we have $\sigma_{-1}(P_{\sigma_{0}(P)})=0$ we see that the symbols of degree 0 and 
     $-1$ of $P-P_{\sigma_{0}(P)}-Q_{\sigma_{-1}(P)}$ are both zero, hence $P-P_{\sigma_{0}(P)}-Q_{\sigma_{-1}(P)}$
    has order~$\leq -2$. Combining this with the second part of Lemma~\ref{lem:Traces01.sigma-commutators} 
    we then deduce that $P$ can be written in the form
    \begin{equation}
        P=P_{\frac{1}{r}(\tr_{\cE}\sigma_{0}(P)) \op{id}_{\cE}} + Q_{\frac{1}{r}(\tr_{\cE}\sigma_{-1}(P)) \op{id}_{\cE}} +R,
%         \label{}
    \end{equation}
     for some $R$ in 
    $\Psi^{-2}(M,\cE)$. Since in dimension 1 Proposition~\ref{prop:TracesZ.commutators-cP0} 
    implies that $\Psi^{-2}(M,\cE)$ is contained in $[\Psi^{0}(M,\cE),\Psi^{0}(M,\cE)]$ 
    we see that
    \begin{equation}
        P=P_{\frac{1}{r}(\tr_{\cE}\sigma_{0}(P)) \op{id}_{\cE}} + Q_{\frac{1}{r}(\tr_{\cE}\sigma_{-1}(P)) \op{id}_{\cE}}
        \quad \bmod [\Psi^{0}(M,\cE),\Psi^{0}(M,\cE)].
        \label{eq:Traces01.P-sum-commutators}
    \end{equation}
    In particular if  $\tr_{\cE}\sigma_{0}(P)=\tr_{\cE}\sigma_{-1}(P)=0$ then $P$ belongs to $[\Psi^{0}(M,\cE),\Psi^{0}(M,\cE)]$. 
    
    Next, let $\tau$ be a trace on $\Psi^{0}(M,\cE)$ and let $L_{1}$ and $L_{2}$ be the linear forms on $C^{\infty}(S^{*}M)$ such that, for any 
    $\sigma \in C^{\infty}(S^{*}M)$, we have
    \begin{equation}
        L_{1}(\sigma)=\tau(P_{\frac{1}{r}\sigma \op{id}_{\cE}}) \quad \text{and} \quad  L_{2}(\sigma)=\tau(Q_{\frac{1}{r}\sigma \op{id}_{\cE}}). 
         \label{eq:Traces0.L1-L2}
    \end{equation}
    Let $P\in \Psi^{0}(M,\cE)$. Then it follows from~(\ref{eq:Traces01.P-sum-commutators}) that $\tau(P)$ is equal to
    \begin{equation}
        \tau[P_{\frac{1}{r}(\tr_{\cE}\sigma_{0}(P)) \op{id}_{\cE}}] + \tau[Q_{\frac{1}{r}(\tr_{\cE}\sigma_{-1}(P)) \op{id}_{\cE}}] 
        =L_{1}(\tr_{\cE}\sigma_{0}(P))+L_{2}(\tr_{\cE}\sigma_{-1}(P)).%=\tau_{L_{1}}(P)+\rho_{L_{2}}(P).
%         \label{¥}
    \end{equation}
    Hence $\tau=\tau_{L_{1}}+\rho_{L_{2}}$. 
    
    Assume now that there is another pair $(L_{1}',L_{2}')$ of linear forms on $C^{\infty}(S^{*}M)$ such that $\tau=\tau_{L_{1}'}+\rho_{L_{2}'}$. 
    Let $\sigma \in C^{\infty}(S^{*}M)$. As $\sigma_{-1}(P_{\frac{1}{r}\sigma \op{id}_{\cE}})=0$ have $\rho_{L_{2}'}(P_{\frac{1}{r}\sigma 
    \op{id}_{\cE}})$, and so we get
    $L_{1}(\sigma)=\tau(P_{\frac{1}{r}\sigma \op{id}_{\cE}})=\tau_{L_{1}'}(P_{\frac{1}{r}\sigma \op{id}_{\cE}})=L_{1}'(\sigma)$. 
    Similarly, we have $L_{2}=L'_{2}$, so the decomposition $\tau=\tau_{L_{1}}+\rho_{L_{2}}$ is unique. 
  \end{proof}
 
 Finally, when $\cE$ is the trivial line bundle the condition $\sigma_{0}(P)=\sigma_{-1}(P)=0$ means that $P$ has order~$\leq -2$. Since in dimension 
 1 Proposition~\ref{prop:TracesZ.commutators-cP0} implies that $\Psi^{-2}(M)$ is contained in $[\Psi^{0}(M),\Psi^{0}(M)]$ we obtain:
   
 \begin{corollary}
     When $\dim M=1$ we have $[\Psi^{0}(M),\Psi^{0}(M)]=\Psi^{-2}(M)$.
 \end{corollary}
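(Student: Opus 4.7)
My plan is to deduce the corollary as a straightforward specialization of Theorem~\ref{thm:Traces01.main}(2) combined with Proposition~\ref{prop:TracesZ.commutators-cP0}, treating the two inclusions separately.

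For the inclusion $[\Psi^{0}(M),\Psi^{0}(M)] \subset \Psi^{-2}(M)$, I would take $\cE$ to be the trivial line bundle of rank $r=1$. Then for any $P\in \Psi^{0}(M)$ we have $\tr_{\cE}\sigma_{0}(P)=\sigma_{0}(P)$ and $\tr_{\cE}\sigma_{-1}(P)=\sigma_{-1}(P)$, so part~2 of Theorem~\ref{thm:Traces01.main} says that $P\in [\Psi^{0}(M),\Psi^{0}(M)]$ forces $\sigma_{0}(P)=\sigma_{-1}(P)=0$. Since the symbol of $P$ then has no homogeneous component of degree $0$ or $-1$, $P$ has order~$\leq -2$, i.e.~$P\in \Psi^{-2}(M)$.

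For the reverse inclusion, I would invoke Proposition~\ref{prop:TracesZ.commutators-cP0} with $m=-2$ and $n=1$. First, any $P\in \Psi^{-2}(M)$ satisfies $c_{P}(x)=0$: the formula~(\ref{eq:NCR.formula-cP}) for $c_{P}(x)$ in dimension $1$ involves the symbol component of degree $-n=-1$, which vanishes whenever $\ord P\leq -2$. With $\tilde{m}=\max(-2,-1)=-1$, Proposition~\ref{prop:TracesZ.commutators-cP0} expresses $P$ as a sum of commutators $[a_{j},P_{j}]+[R_{1},R_{2}]+[R_{3},R_{4}]$ with $a_{j}\in C^{\infty}(M)\subset \Psi^{0}(M)$, with $P_{j}\in \Psi^{\tilde{m}+1}(M)=\Psi^{0}(M)$, and with $R_{j}\in \psinf(M)\subset \Psi^{0}(M)$. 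Each such commutator lies in $[\Psi^{0}(M),\Psi^{0}(M)]$, so $P$ does too.

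There is no real obstacle here; this is essentially the remark made in the prose just before the corollary statement. The only small points requiring care are the automatic vanishing of $c_{P}$ for $P$ of order~$\leq -2$ when $n=1$ (a direct consequence of~(\ref{eq:NCR.formula-cP})) and the observation that $\tilde{m}+1=0$ in the application of Proposition~\ref{prop:TracesZ.commutators-cP0}, which is exactly what makes the commutators fall into the algebra $\Psi^{0}(M)$.
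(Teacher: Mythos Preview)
Your proposal is correct and follows essentially the same route as the paper: the forward inclusion comes from specializing Theorem~\ref{thm:Traces01.main}(2) to the trivial line bundle, and the reverse inclusion from Proposition~\ref{prop:TracesZ.commutators-cP0} with $m=-2$, $n=1$ (noting $\tilde m+1=0$). You spell out a bit more detail than the paper's one-line remark, but the argument is the same.
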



\begin{thebibliography}{FGLS}  
\bibitem[AM]{AM:MTZ} Albert, A. A.; Muckenhoupt, B.: \emph{On matrices of trace zeros}. Michigan Math. J. \textbf{4} (1957), 1--3. 
    
\bibitem[BG]{BG:CHM}   Beals, R.; Greiner, P.: \emph{Calculus on Heisenberg manifolds}. 
      Ann. Math. Studies 119, Princeton Univ. Press, 1988.

 \bibitem[CM]{CM:LIFNCG} Connes, A.; Moscovici, H.: \emph{The local index formula in noncommutative geometry}. 
  Geom. Funct. Anal. \textbf{5} (1995), no. 2, 174--243.
   
\bibitem[FGLS]{FGLS:NRMB} Fedosov, B.V.;  Golse, F.; Leichtnam, E.; Schrohe, E.:
 \emph{The noncommutative residue for manifolds with boundary}. 
 J. Funct. Anal. \textbf{142} (1996), no. 1, 1--31. 
   
 \bibitem[GVF]{GVF:ENCG} Gracia-Bond\'\i a, J.M.; V\'arilly, J.C.; Figueroa, H.: \emph{Elements of noncommutative geometry}. 
 Birkh\"auser Boston, Boston, MA, 2001. 

\bibitem[Gu1]{Gu:NPWF} Guillemin, V.: \emph{A new proof of Weyl's formula on the asymptotic distribution of eigenvalues}. 
 Adv. in Math. \textbf{55} (1985), no. 2, 131--160. 
 
\bibitem[Gu2]{Gu:GLD} Guillemin, V.:  \emph{gauged Lagrangian distributions.} 
Adv. Math. \textbf{102} (1993), no. 2, 184--201.

\bibitem[Gu3]{Gu:RTCAFIO} Guillemin, V.: 
 \emph{Residue traces for certain algebras of Fourier integral operators}. 
 J. Funct. Anal. \textbf{115} (1993), no. 2, 391--417. 
 
% \bibitem[H\"o1]{Ho:PDOHE} H\"ormander, L.: 
% \emph{Pseudo-differential operators and hypoelliptic equations}. 
% Proc. Sympos. Pure Math., Vol. X, pp. 138--183. Amer. Math. Soc., Providence, 1967.

 \bibitem[H\"o1]{Ho:ALPDO1} H\"ormander, L.:  \emph{The analysis of linear partial differential operators. I. Distribution theory and Fourier analysis}. 
 Grundlehren der Mathematischen Wissenschaften, 256. Springer, Berlin, 1990.
 
 \bibitem[H\"o2]{Ho:ALPDO3} H\"ormander, L.:  \emph{The analysis of linear partial differential operators. III. Pseudo-differential operators.}
 Grundlehren der Mathematischen Wissenschaften, 274.  Springer, Berlin, 1994.
 
\bibitem[Ka]{Ka:OCCHTZ} Kahan, W.: \emph{Only commutators can have trace zero.} Preprint, June 1999. Available online at 
\texttt{http://www.cs.berkeley.edu/$\sim$wkahan/MathH110/trace0.pdf}.
 
 
%  \bibitem[Ka]{Ka:RNC} Kassel, C.: \emph{Le r\'esidu non commutatif (d'apr\`es M. Wodzicki)}. 
%   S\'eminaire Bourbaki, Vol. 1988/89. Ast\'erisque No. 177-178, (1989), Exp. No. 708, 199--229.
    
\bibitem[KV]{KV:GDEO} Kontsevich, M.; Vishik, S.:  \emph{Geometry of determinants of elliptic operators.} 
Progr. Math., 131, 173--197, Birkh\"auser Boston,  1995.
 
%  
  \bibitem[Le]{Le:NCRPDOLPS} Lesch, M.: \emph{On the noncommutative residue for pseudodifferential operators with log-polyhomogeneous symbols}. 
  Ann. Global Anal. Geom. \textbf{17} (1999), no. 2, 151--187.
 
\bibitem[LP]{LP:UMDEPDO} Lescure, J.M.; Paycha, S.: \emph{Uniqueness of multiplicative determinants on elliptic pseudodifferential operators}. 
Proc. London Math. Soc. (3) \textbf{94} (2007) 772--812.
 
 \bibitem[MSS]{MSS:UKVT} Maniccia, L.; Schrohe, E.,  Seiler, J.: \emph{Uniqueness of the Kontsevich-Vishik Trace}. E-print, arXiv, February 2007.

   \bibitem[MMS]{MMS:FIT} Mathai, V.; Melrose, R.; Singer, I.: \emph{Fractional index theory}.  J. Differential Geom. \textbf{74} (2006)  265--292.  
  
\bibitem[Me]{Me:APSIT} Melrose, R.B.: \emph{The Atiyah-Patodi-Singer index theorem}. A.K. Peters, 1993.  
 
\bibitem[Pa]{Pa:NCRCTLSCP} Paycha, S.: \emph{The noncommutative residue and canonical trace in the light of Stokes' and continuity properties}. 
E-print, arXiv, June 2007. 

 \bibitem[PR1]{PR:CDBFCF} Paycha, S.; Rosenberg, S.: \emph{Curvature on determinant bundles and first Chern forms}. 
  J. Geom. Phys. \textbf{45} (2003), no. 3-4, 393--429.

 \bibitem[PR2]{PR:TCCLG} Paycha, S.; Rosenberg, S.: \emph{Traces and characteristic classes on loop groups}. 
 IRMA Lect. Math. Theor. Phys., 5, 185--212, de Gruyter, Berlin, 2004.
 
% \bibitem[PS]{PS:LSRIHS} Paycha, S.; Scott, S.: \emph{Laurent expansions for regularised integrals of holomorphic symbols}. 
% Geom. Funct. Anal. \textbf{17} (2007) 491--536
%  
%  \bibitem[Po1]{Po:IJM1} Ponge, R.: \emph{Spectral asymmetry, zeta functions and the noncommutative residue}. 
%   Int. J. Math. \textbf{17} (2006) 1065--1090.
% 
 \bibitem[Po1]{Po:JFA1} Ponge, R.: \emph{Noncommutative residue for Heisenberg manifolds and applications in CR and contact geometry.} 
  J. Funct. Anal. \textbf{252} (2007) 399--463. 

 \bibitem[Po2]{Po:NCR} Ponge, R.: \emph{Schwartz kernels and the noncommutative residue.} 
  E-print, arXiv, Feb.~2008. 

\bibitem[Sc]{Sc:NCRMCS} Schrohe, E.: \emph{Noncommutative residues and manifolds with conical singularities}. 
  J. Funct. Anal. \textbf{150} (1997), no. 1, 146--174.

\bibitem[Sh]{Sh:UKM} Shoda, K.: \emph{\"Uber den Kommutator der Matrizen}. J. Math. Soc. Japan \textbf{3} (1951) 78--81. 
%  
% 
 \bibitem[Ug]{Ug:CCGJMSOUWR} Ugalde, W.J.: \emph{A Construction of critical GJMS operators using Wodzicki's residue}. 
 Comm. Math. Phys.  \textbf{261} (2006), no. 3,  771--788.
 
\bibitem[Va]{Va:PhD} Vassout, S.: \emph{Feuilletages et r\'esidu non commutatif longitudinal}. PhD thesis, University of Paris 7, 2001.

\bibitem[Wo1]{Wo:LISA} Wodzicki, M.: \emph{Local invariants of spectral asymmetry}. 
Invent. Math. \textbf{75} (1984), no. 1, 143--177. 

 \bibitem[Wo2]{Wo:PhD} Wodzicki, M.: \emph{Spectral asymmetry and noncommutative residue} (in Russian), 
 Habilitation Thesis, Steklov Institute, (former) Soviet Academy of Sciences, Moscow, 1984.

 \bibitem[Wo3]{Wo:NCRF} Wodzicki, M.: \emph{Noncommutative residue. I. Fundamentals}.  
%  $K$-theory, arithmetic and geometry (Moscow, 1984--1986),  
 Lecture Notes in Math., 1289, 320--399, Springer, Berlin, 1987.
 
 \bibitem[Wo4]{Wo:RCHS} Wodzicki, M.: \emph{Report on the cyclic homology of symbols}. Preprint, IAS Princeton, Jan.~87. Available online at 
 \texttt{http://math.berkeley.edu/$\sim$wodzicki}.
 
%  \bibitem[Wo5]{Wo:CHPDONCEC} Wodzicki, M.: \emph{Cyclic homology of pseudodifferential operators and noncommutative Euler class}. 
% C. R. Acad. Sci. Paris S\'er. I Math. \textbf{306} (1988), no. 6, 321--325.

\bibitem[Wo5]{Wo:PC} Wodzicki, M.: Personal communication.

\end{thebibliography}
 \end{document}